\title{Many $T$ copies in $H$-free graphs} 
\author{ Noga Alon \thanks{Sackler School of Mathematics and Blavatnik
School of Computer Science, Tel Aviv University, Tel Aviv 69978, Israel
and School of Mathematics, Institute for Advanced Study, Princeton, NJ
08540. Email: nogaa@tau.ac.il. Research supported in part by a
USA-Israeli BSF grant, by an ISF grant, by the Israeli I-Core program
and by the Oswald Veblen Fund.} \and Clara Shikhelman \thanks{Department
of Mathematics, Tel Aviv University, Tel Aviv 69978, Israel. Email:
clarashk@mail.tau.ac.il. Research supported in part by an ISF grant. } }
\date{}
\newtheorem{theo}{Theorem}[section] 
\newtheorem{prop}[theo]{Proposition}
\newtheorem{lemma}[theo]{Lemma}
\newcommand{\HH}{{\cal H}} 
\newcommand{\N}{{\mathcal{N}}}
\newtheorem{theorem}{Theorem}[section]
\newtheorem{proposition}[theorem]{Proposition}
\newtheorem{definition}[theorem]{Definition}
\newtheorem{remark}[theorem]{Remark}
\newtheorem{claim}[theorem]{Claim}
\def\E{\mathop{\mathbb E}} 
\newcommand{\F}{\mathbb F}
\newcommand{\HY}{\mathcal H}
\begin{document} 
\maketitle 

\begin{abstract} For two graphs $T$ and $H$
with no isolated vertices and for an integer $n$, let $ex(n,T,H)$ denote
the maximum possible number of copies of $T$ in an $H$-free graph on $n$
vertices. The study of this function when $T=K_2$ is a single edge is
the main subject of extremal graph theory. In the present paper we
investigate the general function, focusing on the cases of triangles,
complete graphs, complete bipartite graphs and trees. These cases reveal
several interesting phenomena. Three representative results are:\\ (i)
$ex(n,K_3,C_5) \leq (1+o(1)) \frac{\sqrt 3}{2} n^{3/2},$\\ (ii) For any
fixed $m$, $s \geq 2m-2$ and $t \geq (s-1)!+1 $,
$ex(n,K_m,K_{s,t})=\Theta(n^{m-\binom{m}{2}/s})$ and \\ (iii) For any
two trees $H$ and $T$, $ex(n,T,H) =\Theta (n^m)$ where $m=m(T,H)$
is an integer depending on $H$ and $T$ (its precise definition is given
in Section 1).

The first result improves (slightly) an estimate of Bollob\'as and
Gy\H{o}ri. The proofs combine combinatorial and probabilistic arguments
with simple spectral techniques. 
\end{abstract}

\section{Introduction}

For two graphs $T$ and $H$ with no isolated vertices and for an integer
$n$, let $ex(n,T,H)$ denote the maximum possible number of copies of $T$
in an $H$-free graph on $n$ vertices.

When $T=K_2$ is a single edge, $ex(n,T,H)$ is the well studied function,
usually denoted by $ex(n,H)$, specifying the maximum possible number of
edges in an $H$-free graph on $n$ vertices. There is a huge literature
investigating this function, starting with the theorems of Mantel
\cite{Ma} and Tur\'an \cite{Tu} that determine it for $H=K_r$. See, for
example, \cite{Si} for a survey.

In the present paper we show that the function  for other graphs $T$
besides $K_2$ exhibits several additional interesting features. We
illustrate these by presenting several general results and by focusing
on various special cases of graphs $H$ and $T$ in certain prescribed
families. 
The question is interesting for many other graphs $T$ and $H$, and many
of the results here can be extended.

There are several sporadic papers dealing with the function $ex(n,T,H)$
for $T \neq K_2$. The  first one is due to Erd\H{o}s in \cite{Er62},
where he determines $ex(n,K_t,K_r)$ for all $t <r$ (see also \cite{Bo}
for an extension). A notable recent example is given in \cite{HHKNR},
where the authors determine this function precisely for $T=C_5$ and
$H=K_3$. 

The case $T=K_3$ and $H=C_{2k+1}$ has also been studied. Bollob\'as and
Gy\H{o}ri \cite{BG} proved that 
\begin{equation} \label{e11} (1+o(1))
\frac{1}{3 \sqrt 3}n^{3/2} \leq ex(n,K_3,C_5) \leq (1+o(1))
\frac{5}{4}n^{3/2}. 
\end{equation}

Gy\H{o}ri and Li \cite{GL} proved that for any fixed $k \geq 2$
\begin{equation} \label{e12} 
\binom{k}{2} ex_{bip}
(\frac{2n}{k+1},C_4,C_6, \ldots ,C_{2k}) \leq ex(n,K_3,C_{2k+1}) \leq
\frac{(2k-1)(16k-2)}{3} ex(n,C_{2k}), 
 \end{equation}

\noindent where $ex_{bip} (m,C_4,C_6, \ldots ,C_{2k})$ denotes the
maximum possible number of edges in a bipartite graph on $m$ vertices
and girth exceeding $2k$.

Here we start with a simple characterization of the pairs of graphs $H$
and $T$ for which $ex(n,T,H)=\Theta(n^t)$, where $t$ is the number of
vertices of $T$. Combining this observation with the graph removal lemma
we establish an Erd\H{o}s-Stone type result by giving an asymptotic
formula for $ex(n,K_t,H)$ for any graph $H$ with chromatic number
$\chi(H)>t$.

Next we study the case $T=K_3$. Our first result characterizes all
graphs $H$ for which $ex(n,K_3,H) \leq c(H) n$. The friendship graph
$F_k$ is the graph consisting of $k$ triangles with a common vertex.
Equivalently, this is the graph obtained by joining a vertex to all $2k$
vertices of a matching of size $k$. Call a graph an extended friendship
graph iff its $2$-core is either empty or $F_k$ for some positive $k$.
\vspace{0.3cm}

\noindent \begin{theorem} \label{t11} There exists a constant $c(H)$ so
that $ex(n,K_3,H)\leq c(H)n$ if and only if $H$ is a subgraph of an
extended friendship graph. \end{theorem}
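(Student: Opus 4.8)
The plan is to first replace the hypothesis ``$H$ is a subgraph of an extended friendship graph'' by a more convenient combinatorial condition, and then prove the two implications separately; the ``only if'' direction is short, while the ``if'' direction carries the weight.

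\textbf{Reformulation.} I would first establish that $H$ is a subgraph of an extended friendship graph \emph{if and only if} $H$ contains no cycle of length $\ge 4$ and no two vertex-disjoint triangles. The forward direction is immediate, since forests and friendship graphs with pendant forests have no cycle of length $\ge 4$ and (in the $F_k$ case) all their triangles through the centre, and both properties pass to subgraphs. For the converse one uses that every $2$-connected graph on at least $4$ vertices contains a cycle of length $\ge 4$: pick a triangle in it; by Menger's theorem there are two internally disjoint paths from a fourth vertex to the triangle, and closing them up through an edge of the triangle gives such a cycle (after a short check of the degenerate case). Hence if $H$ has no cycle of length $\ge 4$ then every block of $H$ is an edge or a triangle, and a Helly-type argument on the block-cut tree — if a third triangle-block avoided the vertex shared by two others one could assemble a triangle out of edges of three different blocks, impossible — shows that when in addition no two triangles of $H$ are disjoint, all triangle-blocks pass through one vertex; $H$ then embeds in ``$F_k$ with pendant forests'', hence in $H_{k,r}$ for suitable $k,r$, where $H_{k,r}$ denotes $F_k$ with a rooted tree of depth and branching $r$ attached at each of its $2k+1$ vertices. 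Equivalently, $ex(n,K_3,H)$ fails to be $O(n)$ precisely when $H\supseteq C_\ell$ for some $\ell\ge4$ or $H\supseteq 2K_3$. Since $ex(n,K_3,H)\ge ex(n,K_3,H_0)$ whenever $H_0\subseteq H$, it suffices to prove that $ex(n,K_3,H_0)$ is superlinear for $H_0\in\{2K_3\}\cup\{C_\ell:\ell\ge4\}$, and that $ex(n,K_3,H_{k,r})=O_{k,r}(n)$.

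\textbf{The ``only if'' direction.} For $H_0=2K_3$, take a vertex joined completely to a balanced complete bipartite graph on the remaining $n-1$ vertices: every triangle uses the apex, so no two triangles are disjoint, while there are $\Theta(n^2)$ triangles. For $H_0=C_\ell$, $\ell\ge4$, take a random partial Steiner triple system on $[N]$ with $\Theta(N^{1+\varepsilon})$ triples, $\varepsilon=\varepsilon(\ell)$ small (e.g.\ $\varepsilon=\tfrac1{2\ell}$), and let $G_0$ be its shadow graph; then $G_0$ has $\Theta(N^{1+\varepsilon})$ triangles, while a first-moment estimate gives $\mathbb{E}[\#C_\ell(G_0)]=O(N^{\ell\varepsilon})=o(N^{1+\varepsilon})$, so deleting one edge per $\ell$-cycle leaves a $C_\ell$-free graph with $\Theta(N^{1+\varepsilon})$ triangles. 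In either case $ex(n,K_3,H)$ is superlinear; for odd $\ell$ the stronger $\Theta(n^{3/2})$ lower bound also follows from the estimates of Bollob\'as--Gy\H{o}ri and Gy\H{o}ri--Li quoted above.

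\textbf{The ``if'' direction.} It suffices to prove the \textbf{Lemma}: there is $C=C(k,r)$ such that every graph in which every vertex lies in more than $C$ triangles contains $H_{k,r}$ — for then an $H_{k,r}$-free $G$ always has a vertex in at most $C$ triangles, and deleting vertices one at a time destroys at most $C$ triangles apiece, giving $\#K_3(G)\le Cn$. To prove the Lemma, suppose every vertex of $G$ lies in more than $C$ triangles; then $G$ has minimum degree exceeding $\sqrt{2C}$ and every $G[N(v)]$ has more than $C$ edges. If some $G[N(v)]$ has matching number $\ge k$ we get $F_k$ centred at $v$, and the large minimum degree lets us greedily grow the depth-$r$ trees on its $2k+1$ vertices, producing $H_{k,r}$. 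Otherwise every $G[N(v)]$ has matching number $<k$, so by the Erd\H{o}s--Gallai/K\H{o}nig bound some vertex covers $\ge C/(2k)$ of its edges, i.e.\ every vertex has an incident edge lying in $\ge C/(2k)$ triangles: a large book. One then examines the book's pages — if $k$ pairwise disjoint adjacent pairs of pages occur we again find $F_k$ (centred at a spine vertex) and finish as before; otherwise, discarding boundedly many pages, the rest form a large independent set of vertices still of large degree and triangle-degree, on which one iterates, either locating $F_k$ elsewhere or descending to the ``next layer'' at the cost of one unit of the depth budget $r$.

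\textbf{Main obstacle.} The reformulation and the constructions are routine. The heart of the argument is the Lemma, and specifically the case in which the triangles are concentrated along books rather than spread over wide neighbourhoods — there one must still extract the full $H_{k,r}$. I expect this to require an induction, on $k$, on the depth $r$, or on both jointly, with $C(k,r)$ growing accordingly; the delicate point is keeping track of which vertices remain ``rich'' (of large degree and large triangle-degree) through the successive reductions so that the induction actually bottoms out.
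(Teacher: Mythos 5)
Your overall architecture matches the paper's quite closely: the reformulation ($H$ embeds in an extended friendship graph iff it has no cycle of length at least $4$ and no two vertex-disjoint triangles) is exactly the characterization the paper uses; your two superlinear constructions (an apex joined to a balanced complete bipartite graph for $2K_3$, and a sparse random triple system with deletions for $C_\ell$) are essentially the paper's complete tripartite example and its Lemma~\ref{l22}; and your reduction of the upper bound to a single ``local'' lemma via vertex deletion is a legitimate variant of the paper's ``remove low-degree vertices, then count'' step.

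The genuine gap is in the proof of your key Lemma, that is, in showing that a graph in which every vertex lies in more than $C(k,r)$ triangles must contain $F_k$ (attaching the pendant trees afterwards by greedy embedding using the large minimum degree is indeed routine, so the entire weight rests on producing $F_k$). Your first case, where some neighbourhood has matching number at least $k$, is fine. But the second case --- triangles concentrated along large books --- is only gestured at: ``discarding boundedly many pages, the rest form a large independent set \dots on which one iterates, either locating $F_k$ elsewhere or descending to the next layer at the cost of one unit of the depth budget $r$'' is not an argument. The depth parameter $r$ governs only the pendant trees and has nothing to do with locating the $2$-core $F_k$, so it cannot serve as the induction variable here; it is not specified what quantity decreases from one ``layer'' to the next, which vertices retain the ``every vertex in many triangles'' property after passing to the pages of a book, or why the iteration terminates in an $F_k$. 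You flag this yourself as the unresolved obstacle, and it is precisely the step the paper's Lemma~\ref{l21} supplies: take a maximal family of edge-disjoint triangles; if $c$ of them share a vertex you are done; otherwise properly colour them with $3c-5$ colours so that same-coloured triangles are pairwise vertex-disjoint, use maximality to note that every triangle of $G$ shares an edge with a member of the family, pigeonhole on colours to obtain $3(c+1)n$ triangles organized in clusters around vertex-disjoint central triangles, and then pigeonhole on the external vertices of the non-central triangles to find one vertex that is external for $c$ triangles from $c$ distinct clusters; these form an $F_c$. (Note also that your hypothesis ``every vertex lies in more than $C$ triangles'' already forces more than $Cn/3$ triangles in total, so any lemma of the form ``a bounded-order graph with many triangles contains $F_k$'' would close your argument --- but such a lemma is exactly what remains to be proved.)
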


We also slightly improve the upper estimates in (\ref{e11}) and in
(\ref{e12}) above, proving the following. \begin{prop} \label{p13} The
following upper bounds hold.

\noindent (i) $ex(n,K_3,C_5) \leq (1+o(1)) \frac{\sqrt 3}{2} n^{3/2}.$

\noindent (ii) For any $k \geq 2$, $ex(n,K_3,C_{2k+1}) \leq
\frac{16(k-1)}{3} ~ ex(\lceil n/2 \rceil,C_{2k}).$ \end{prop}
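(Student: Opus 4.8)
The plan is to prove both parts by analysing, for each vertex $v$, the graph $G[N(v)]$ induced on its neighbourhood, after a common preliminary reduction. Given a $C_5$-free (respectively, $C_{2k+1}$-free) graph $G$ on $n$ vertices, delete every edge of $G$ that lies in no triangle. The resulting spanning subgraph is still $C_5$-free (respectively, $C_{2k+1}$-free), has exactly the same triangles as $G$, and each of its edges now lies in one of its own triangles; so I may assume from the start that $G$ has this property.

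\medskip
For part (i): if $G$ is $C_5$-free, then for each vertex $v$ the graph $G[N(v)]$ contains no path on four vertices, since such a path $a_1a_2a_3a_4$ inside $N(v)$ would close up through $v$ to a $C_5$. A graph with no $P_4$ subgraph is a vertex-disjoint union of stars and triangles, so the number of triangles through $v$, which equals $e(G[N(v)])$, is at most $\deg(v)$; summing over $v$ gives $3t(G)\le 2e(G)$. Next I would show that two non-adjacent vertices $x,y$ have at most two common neighbours: if $x\not\sim y$ had three common neighbours $v_1,v_2,v_3$, then fixing (using the reduction) a triangle $xv_1w$ on the edge $xv_1$ and running through the few cases for where $w$ sits relative to $\{v_1,v_2,v_3\}$, one always locates a $C_5$ on five of the vertices $\{x,y,v_1,v_2,v_3,w\}$. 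Consequently the number of \emph{open} cherries (paths $xvy$ with $x\not\sim y$) is at most $2\binom n2$, while the number of \emph{closed} cherries (those with $x\sim y$) is exactly $3t(G)$; since the total number of cherries $\sum_v\binom{\deg v}2$ is at least $\frac{2e(G)^2}{n}-e(G)$ by convexity, combining this with the two counts and with $3t(G)\le 2e(G)$ yields a quadratic inequality in $e(G)$ which forces $e(G)\le(1+o(1))\frac{1}{\sqrt2}n^{3/2}$, hence $t(G)\le\frac23 e(G)\le(1+o(1))\frac{\sqrt2}{3}n^{3/2}$, comfortably within the claimed bound $(1+o(1))\frac{\sqrt3}{2}n^{3/2}$.

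\medskip
For part (ii) I would follow the scheme of Gy\H{o}ri and Li behind (\ref{e12}), tightening two steps. First, if $G$ is $C_{2k+1}$-free then $G[N(v)]$ contains no path on $2k$ vertices (again, such a path closes up through $v$ to a $C_{2k+1}$), so by the Erd\H{o}s--Gallai theorem $e(G[N(v)])\le(k-1)\deg(v)$, and summing gives $3t(G)\le 2(k-1)e(G)$; this already replaces the factor of order $2k$ implicit in (\ref{e12}) by $k-1$. It then remains to bound $e(G)$. Here the key point is that a $C_{2k+1}$-free graph in which every edge lies in a triangle cannot contain a $C_{2k}$ all of whose edges have their triangle-apexes off the cycle, since such an apex would extend a $(2k-1)$-arc of the cycle to a $C_{2k+1}$; exploiting this one extracts, via a suitable bipartition of $V(G)$, a $C_{2k}$-free subgraph on at most $\lceil n/2\rceil$ vertices carrying a positive fraction of the edges, and tracking the fraction gives $e(G)\le 8\,ex(\lceil n/2\rceil,C_{2k})$. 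Combining the two bounds yields $t(G)\le\frac{2(k-1)}{3}\cdot 8\,ex(\lceil n/2\rceil,C_{2k})=\frac{16(k-1)}{3}ex(\lceil n/2\rceil,C_{2k})$.

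\medskip
The main obstacle is this last step: the naive maximum-cut bipartition need not produce a $C_{2k}$-free subgraph, because a $C_{2k}$ with all of its triangle-chords present only forces a \emph{short} odd cycle rather than a $C_{2k+1}$. So one has to choose the bipartition more carefully (a layered breadth-first-search argument in the spirit of \cite{GL}) in order to simultaneously destroy all copies of $C_{2k}$, keep one side of size at most $\lceil n/2\rceil$, and retain enough edges to keep the constant at $8$; getting this constant exactly right, rather than merely $O(1)$, is the delicate part of the argument.
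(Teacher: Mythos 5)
Your part (i) is correct, and it takes a genuinely different route from the paper. The paper also begins with the reduction to edges lying in triangles and the bound $3t\le 2m$ from the $P_4$-free neighbourhoods, but it then bounds $m$ probabilistically: colour each vertex blue with probability $p$, keep only the blue--blue edges whose designated triangle-apex $w(e)$ is red, observe that this graph is $C_4$-free (a $C_4$ plus a red apex would be a $C_5$), and optimize $p=1/3$ against $ex(\cdot,C_4)$ to get $m\le(1+o(1))\frac{3\sqrt3}{4}n^{3/2}$. Your substitute—that in a $C_5$-free graph in which every edge lies in a triangle, two non-adjacent vertices $x,y$ have at most two common neighbours—checks out: if some $v_i\sim v_j$ then $x\,v_i\,v_j\,y\,v_\ell\,x$ is a $C_5$, and otherwise the apex $w$ of $xv_1$ is forced outside $\{x,y,v_1,v_2,v_3\}$ and $w\,v_1\,y\,v_2\,x\,w$ is a $C_5$. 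The cherry count then gives $2m^2/n-m\le 2\binom{n}{2}+3t\le n^2+2m$, hence $m\le(1+o(1))n^{3/2}/\sqrt2$ and $t\le(1+o(1))\frac{\sqrt2}{3}n^{3/2}$, which is in fact stronger than the stated $\frac{\sqrt3}{2}n^{3/2}$. So for (i) you lose nothing and gain a constant.

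Part (ii), however, has a genuine gap exactly where you flag one. The step $3t\le 2(k-1)m$ is fine, but the reduction $m\le 8\,ex(\lceil n/2\rceil,C_{2k})$ is asserted, not proved: you correctly observe that the induced subgraph on one side of a cut need not be $C_{2k}$-free, and the ``layered BFS'' repair you gesture at is not the mechanism that works. The missing idea is the same apex device the paper uses in part (i). Fix for each edge $e=uv$ a vertex $w(e)$ completing it to a triangle, two-colour $V$ red/blue, and let $E'$ consist of those edges with \emph{both endpoints blue and $w(e)$ red}. One never needs the whole blue induced subgraph to be $C_{2k}$-free: the graph $(V',E')$ is automatically $C_{2k}$-free, because the apex of any edge of a putative $C_{2k}$ in $E'$ is red, hence not on the all-blue cycle, and extends it to a $C_{2k+1}$ in $G$. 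To get the vertex count $\lceil n/2\rceil$ exactly and the constant $8$, the colouring is not independent: pair up the vertices into $\lceil n/2\rceil$ classes and colour exactly one vertex of each pair blue; for a pair that is itself an edge $uv$, reassign the apices of the other two edges of the triangle $u\,v\,w(uv)$ to point back into the pair, so that although $uv$ itself can never land in $E'$, the edges $uw$ and $vw$ each do with probability $1/4$. Then $\E|E'|\ge m/8$ while $|V'|\le\lceil n/2\rceil$ always, giving $m/8\le ex(\lceil n/2\rceil,C_{2k})$ and the claimed bound. Without some such construction, your part (ii) remains an outline rather than a proof.
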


A similar result has been proved independently by F\"uredi and \"Ozkahya
\cite{FO}, who showed that $ex(n,K_3,C_{2k+1}) \leq 9k ~ex (n, C_{2k})$.

The next theorem deals with maximizing the number of copies of a
complete graph while avoiding complete bipartite graphs: \vspace{0.3cm}

\noindent \begin{theorem} \label{thm:K_mK_s,t} For any fixed $m$ and $t
\geq s $ satisfying $s\geq 2m-2$ and $t \geq (s-1)!+1$ there are two
constants $c_1=c_1(s,t)$ and $c_2=c_2(s,t)$ such that $$ c_1
n^{m-\binom{m}{2}/s} \leq ex(n,K_m,K_{s,t}) \leq c_2
n^{m-\binom{m}{2}/s}. $$ \end{theorem}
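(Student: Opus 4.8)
Write $\rho(m,s)=m-\binom{m}{2}/s$, so the claim is $ex(n,K_m,K_{s,t})=\Theta(n^{\rho(m,s)})$. For the upper bound I would prove, by induction on $m$, the statement that $ex(n,K_m,K_{s,t})\le c(m,s,t)\,n^{\rho(m,s)}$ for all $2\le m\le s\le t$ (our hypothesis $s\ge 2m-2$ certainly implies $s\ge m$). The base case $m=2$ is the K\H{o}v\'ari--S\'os--Tur\'an theorem, $ex(n,K_{s,t})=O(n^{2-1/s})$. For the inductive step, let $G$ be $K_{s,t}$-free on $n$ vertices. The key observation is that for every vertex $v$ the induced graph $G[N(v)]$ is $K_{s-1,t}$-free, since a copy of $K_{s-1,t}$ inside $N(v)$, together with $v$ adjoined to the part of size $s-1$, would be a $K_{s,t}$ in $G$. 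Counting each copy of $K_m$ through each of its $m$ vertices gives $m\cdot\#\{K_m\text{ in }G\}=\sum_v\#\{K_{m-1}\text{ in }G[N(v)]\}\le\sum_v ex(d(v),K_{m-1},K_{s-1,t})\le c(m-1,s-1,t)\sum_v d(v)^{\rho(m-1,s-1)}$, the last step by the induction hypothesis (the triple $(m-1,s-1,t)$ again satisfies $2\le m-1\le s-1\le t$).

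\textbf{Bounding the degree sum.} Double counting pairs $(v,S)$ with $|S|=s$ and $S\subseteq N(v)$, and using that $G$ is $K_{s,t}$-free, gives $\sum_v\binom{d(v)}{s}=\sum_{|S|=s}\big|\bigcap_{u\in S}N(u)\big|\le(t-1)\binom{n}{s}$, hence $\sum_v d(v)^s=O_{s,t}(n^s)$. Since $0<\rho(m-1,s-1)\le m-1\le s$, the power-mean inequality gives $\sum_v d(v)^{\rho(m-1,s-1)}\le n^{1-\rho(m-1,s-1)/s}\big(\sum_v d(v)^s\big)^{\rho(m-1,s-1)/s}=O\big(n^{1+\rho(m-1,s-1)(1-1/s)}\big)$. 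A one-line computation shows $1+\rho(m-1,s-1)(1-1/s)=\rho(m,s)$: writing everything over $s$, the numerator is $s+(m-1)(s-1)-\binom{m-1}{2}=ms-\binom{m}{2}$. This closes the induction and proves the upper bound (in particular whenever $s\ge 2m-2$).

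\textbf{Lower bound.} Here the hypothesis $t\ge(s-1)!+1$ is used exactly as in the K\H{o}v\'ari--S\'os--Tur\'an lower bound: by the norm-graph constructions of Koll\'ar--R\'onyai--Szab\'o and Alon--R\'onyai--Szab\'o there is, for infinitely many $N$, a $K_{s,t}$-free graph $G_N$ on $N$ vertices that is $d$-regular with $d=\Theta(N^{1-1/s})$ and whose second eigenvalue in absolute value satisfies $\lambda=O(\sqrt d)$. By the standard counting lemma for $(N,d,\lambda)$-graphs, such a graph contains $(1+o(1))\frac{1}{m!}N^m(d/N)^{\binom{m}{2}}=\Theta(N^{\rho(m,s)})$ copies of $K_m$, provided the spectral error terms in the vertex-by-vertex embedding are negligible, i.e.\ provided $\lambda\ll d^{\,m-1}/N^{\,m-2}$. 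With $d=\Theta(N^{1-1/s})$ this reads $N^{(1-1/s)/2}\ll N^{1-(m-1)/s}$, which holds precisely when $s>2m-3$, that is, when $s\ge 2m-2$ --- our hypothesis. Finally, padding $G_N$ with isolated vertices (prime powers are dense enough to realize every order up to a $1+o(1)$ factor) loses only a constant factor, giving $ex(n,K_m,K_{s,t})\ge c_1 n^{\rho(m,s)}$.

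\textbf{Main obstacle.} The upper bound is essentially routine once one spots that neighborhoods in a $K_{s,t}$-free graph are $K_{s-1,t}$-free --- after that it is induction plus the power-mean inequality, with the arithmetic conveniently telescoping. The delicate part is the lower bound: one needs a $K_{s,t}$-free graph that is simultaneously near-extremal in edge count and sufficiently pseudorandom --- this is where $t\ge(s-1)!+1$ is forced, via the Alon--R\'onyai--Szab\'o bound --- and one must then check that its spectral gap is small enough for the $K_m$-count not to be swamped by error terms, which is exactly what the constraint $s\ge 2m-2$ buys. I expect this last verification, pinning down the precise threshold on $s$, to require the most care.
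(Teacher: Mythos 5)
Your proposal is correct and follows essentially the same route as the paper: the upper bound is the same induction on $m$ using that neighborhoods in a $K_{s,t}$-free graph are $K_{s-1,t}$-free, combined with the power-mean inequality and the bound $\sum_v \binom{d(v)}{s}\le (t-1)\binom{n}{s}$, and the lower bound uses the projective norm graphs together with the eigenvalue-based subgraph-counting lemma for $(n,d,\lambda)$-graphs, with the condition $s\ge 2m-2$ arising from exactly the same pseudorandomness requirement. The arithmetic in both the telescoping exponent identity and the spectral threshold checks out.
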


The last two theorems focus on the case where the excluded graph $H$ is a
tree. Before stating  the results we give the following definitions:

\begin{definition} For a graph $T$, a set of vertices $U\subseteq V(T)$
and an integer $h$, the $(U,h)$ blow-up of $T$ is the following graph.
Fix the vertices in $U$, and replace each connected component in
$T\setminus U$ with $h$ vertex disjoint copies of it connected to the
vertices of $U$ exactly as the original component is connected to these
in $T$. \end{definition}

\begin{definition} For two trees, $T$ and $H$, let $m(T,H)$  be the
maximum integer $m$ such that there is a $(U,|V(H)|)$ blow-up of $T$
containing no copy of $H$ and having $m$ connected components in
$T\setminus U$. \end{definition}

In this notation we prove the following.

\begin{theorem} 
\label{thm:2trees} 
For every two integers $t$ and $h$ there are positive constants 
$c_1(t,h),c_2(t,h)$ so that the following holds.
Let $H$ be a tree on $h$ vertices and let $T$ be a tree on $t$
vertices, then
$$ 
c_1(t,h)n^m \leq ex(n,T,H) \leq c_2(t,h) n^m, 
$$ 
where $m=m(T,H)$. 
\end{theorem}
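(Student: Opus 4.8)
The plan is to prove the two bounds separately, with the lower bound being essentially a construction and the upper bound requiring more work. For the lower bound, I would take the $(U, |V(H)|)$-blow-up of $T$ that witnesses $m = m(T,H)$: it contains no copy of $H$ and has $m$ connected components in $T \setminus U$. The idea is to iterate this blow-up. Starting from this graph, repeatedly replace each "leaf component" by $|V(H)|$ disjoint copies; after $r$ rounds one gets an $H$-free graph (one must check $H$-freeness is preserved under further blow-ups, which follows because $H$ is a tree, so any copy of $H$ in a blow-up must project to a copy of $H$ — a connected subgraph cannot spread across distinct disjoint copies of a component without revisiting $U$, and the structure on $U$ is unchanged) on roughly $c \cdot h^r$ vertices. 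This graph contains at least $(h/c')^{r \cdot m}$ copies of $T$ (each round multiplies the count of copies rooted appropriately by a factor $\Theta(h)$ per component, and there are $m$ components), so setting $n \approx h^r$ gives $\Omega(n^m)$ copies of $T$. The constants depend only on $t$ and $h$.

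For the upper bound I would proceed by induction, ideally on $|V(H)| + |V(T)|$ or on the structure of $T$. Let $G$ be an $H$-free graph on $n$ vertices; I want to bound the number of copies of $T$ in $G$ by $c_2(t,h) n^m$. The natural move is to pick a leaf $v$ of $T$ with neighbor $u$, and write $T = T' + \{v\}$ where $T'$ is $T$ with $v$ removed (still a tree, on $t-1$ vertices). Every copy of $T$ in $G$ extends a copy of $T'$ by choosing an image of $v$ in the neighborhood of the image of $u$. So (number of copies of $T$) $\le \sum_{(\text{copies of } T')} \deg_G(\phi(u))$. This alone gives a bound in terms of $\max$ degree, which is too weak; the point is that high-degree vertices, combined with $H$-freeness, force structure. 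The key combinatorial lemma to isolate and prove is: if $G$ is $H$-free ($H$ a tree on $h$ vertices), then for any fixed rooted tree, the number of its copies in $G$ with a prescribed root is controlled — more precisely, one should set up a weighting/potential on partial embeddings so that the blow-up parameter $m(T,H)$ emerges as the exponent. A cleaner route: show that it suffices to bound the number of copies of $T$ whose image of some chosen vertex set $U \subseteq V(T)$ is fixed, and that an $H$-free graph cannot contain the $(U, h)$-blow-up of $T$ for $|U|$ chosen so that $m(T,H)$ components are "free"; then the Kővári–Sós–Turán-style counting (or a direct greedy embedding argument for trees) bounds the number of completions of each component-with-$U$-fixed by a polynomial factor, and multiplying over components and summing over the $O(n^{|U|})$ choices for the image of $U$ yields $O(n^{m})$ — wait, this needs care since $|U|$ plus the per-component exponents must total exactly $m$, which is exactly the content of the definition of $m(T,H)$.

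The main obstacle will be the upper bound, specifically pinning down why the definition of $m(T,H)$ — phrased via blow-ups containing no copy of $H$ — is the \emph{right} exponent rather than just \emph{an} upper bound on it. Concretely, one must show: (a) if $G$ is $H$-free then it contains no $(U,h)$-blow-up of $T$ with more than $m$ free components (definitional, with $h = |V(H)|$), and crucially the converse-flavored counting statement (b): any $H$-free $G$ on $n$ vertices has at most $c_2 n^m$ copies of $T$. For (b) I expect to need a Kővári–Sós–Turán type argument applied component-wise: fix an embedding of a carefully chosen separating set, bound the number of ways to embed each component of the remainder using the fact that if too many disjoint copies of a component attach to the same image of $U$ we build a forbidden blow-up hence a copy of $H$; the exponent bookkeeping — $|U|$ choices for $U$'s image times a bounded (constant, depending on $t,h$) number of near-disjoint completions per free component, i.e. $n^{|U|} \cdot 1^{(\#\text{components})} \cdot n^{(\#\text{free components})}$ collapsing to $n^m$ — is where the definition does exactly the work needed, and verifying it matches is the delicate point. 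An alternative cleaner approach for the upper bound, which I would try first, is a direct induction: delete a vertex of maximum degree $\Delta$ from $G$; if $\Delta$ is small the degree-product bound on copies of $T$ (sum over copies of $T'$ of $\deg(\phi(u))$) already gives what we want, and if $\Delta$ is large, the neighborhood of that vertex is $H'$-free for a smaller tree $H'$, letting induction control the copies of $T$ through it — balancing these two cases should reproduce the exponent $m(T,H)$.
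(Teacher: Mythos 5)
Your lower bound is essentially the paper's construction in disguise: iterating ``replace each component by $h$ copies'' $r$ times starting from the witnessing $(U,h)$ blow-up just produces the $(U,h^r)$ blow-up, and the paper simply takes a $(U,\Theta(n))$ blow-up directly. One small repair: your $H$-freeness justification via ``projection'' is not the right argument (a homomorphic image of $H$ in the base graph need not be a copy of $H$); the correct and easy argument, which you nearly state, is that any copy of $H$ uses at most $h$ vertices and therefore lies inside a sub-blow-up isomorphic to a subgraph of the $(U,h)$ blow-up, which is $H$-free by the choice of $U$. So this half is fine.

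The upper bound, however, has a genuine gap, and you have put your finger on it yourself without resolving it. The bookkeeping you propose --- $n^{|U|}$ choices for the image of $U$ times the completions of the components of $T\setminus U$ --- yields exponent $|U|+(\#\mbox{ components})$, not $m$: by definition $m(T,H)$ \emph{is} the number of components of $T\setminus U$, with no $|U|$ term added. Concretely, for $T=P_3$ and $H=P_5$ the optimal $U$ is the centre of $P_3$, so $m=2$, and your scheme gives $n^{1}\cdot n^{2}=n^{3}$ while the truth is $\Theta(n^{2})$ (a star). Neither of your two sketches explains how the $n^{|U|}$ factor gets absorbed, and there is the further unaddressed issue that different copies of $T$ may want different sets $U$, whereas $m$ is a max over $U$. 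The paper's mechanism is a lemma proved by induction on $|V(T)|+m$: if an $h'$-degenerate graph with a fixed $t$-partition contains $Cn^{m-1}$ properly partitioned copies of $T$, then it contains a (proper) $(U,h)$ blow-up of $T$ with $m$ components in $T\setminus U$. The induction peels a leaf $u_1$ with neighbour $u_2$ and splits $\sum_v d_1(v)\,\mathcal{N}_{u_2}(T',v)$ according to whether $d_1(v)\ge h$ or $d_1(v)<h$; the step that kills the spurious $n^{|U|}$ is that $H$-freeness forces $G$ to be $h$-degenerate, so $\sum_v d_1(v)\le hn$, whence $\Omega(n^{m-1})$ copies concentrated on high-degree vertices force a \emph{single} vertex $v_2$ carrying $\Omega(n^{m-2})$ copies of $T'=T\setminus\{u_1\}$, and one recurses inside $\{v_2\}\cup V_3\cup\dots\cup V_t$ with $m$ reduced by one ($u_1$ becoming a new singleton component attached at the now-fixed $v_2$); the low-degree case recurses with the same $m$ but smaller $t$, finishing with a K\H{o}nig-type star-or-matching step. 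Applying this with $m+1$ and invoking the maximality in the definition of $m(T,H)$ gives $O(n^m)$. Without this lemma, or an equivalent device, your upper bound is not established; your alternative ``delete the maximum-degree vertex'' route is asserted to ``reproduce the exponent'' but no argument is given that the case analysis lands on $m$.
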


Finally we consider the case where $T$ is a bipartite graph and $H$ is a
tree. For a tree $H$, any $H$-free graph can have at most a linear
number of edges. Therefore, by a theorem proved in \cite{Al}, the
maximum possible number of copies of any bipartite graph $T$ in it is
bounded by $O(n^{\alpha(T)})$, where $\alpha(T)$ is the size of a
maximum independent set in $T$. Using the next definition we
characterize the cases in which $ex(n,T,H)=\Theta(n^{\alpha(T)})$.

\begin{definition} An edge cover of a graph $T$ 
(with no isolated vertices) is a set $\Gamma\subset
E(T)$ such that for each vertex $v\in V(T)$ there is an edge $e\in
\Gamma$ for which $v\in e$. Call an edge-cover minimum if it has the
smallest possible number of edges.

A set of vertices $U \subset V(T)$ is called  a $U(\Gamma)$-set if each
connected component of $T\setminus U$ intersects exactly one edge of
$\Gamma$, and the number of these connected components is $|\Gamma |$.
\end{definition}

Note that if $\Gamma$ is an edge cover of $T$ and $U$ is a
$U(\Gamma)$ set, then any connected component of $T\setminus U$
is either an edge of $\Gamma$ or a single vertex.

\begin{theorem} 
\label{thm:TreeAndBi} 
Let $T$ be a bipartite graph with no isolated vertices and
let $H$ be a tree on $h$ vertices. 
Then the following are equivalent: \begin{enumerate}
\item $ex(n,T,H)=\Theta(n^{\alpha(T)})$ \item For any minimum edge-cover
$\Gamma$ of $T$ there is a choice of a $U(\Gamma)$-set $U$ such that the
$(U,h)$ blow-up of $T$ does not contain a copy of $H$, \item For some
minimum edge cover $\Gamma$ of $T$ there is a choice of a
$U(\Gamma)$-set $U$ such that the $(U,h)$ blow-up of $T$ does not
contain a copy of $H$. \end{enumerate} 
\end{theorem}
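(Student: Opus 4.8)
We first isolate a purely combinatorial fact about $T$. By the Gallai identities and, since $T$ is bipartite, by K\"onig's theorem, every minimum edge cover $\Gamma$ of $T$ has $|\Gamma|=\alpha(T)$. A short counting argument then shows that a set $U\subseteq V(T)$ is a $U(\Gamma)$-set if and only if $T\setminus U$ has exactly $\alpha(T)$ connected components: writing $C_1,\dots,C_k$ for these components, each edge of $\Gamma$ meets at most one $C_i$ (no edge of $T$ joins two different components of $T\setminus U$), while each $C_i$ is met by at least one edge of $\Gamma$ (as $\Gamma$ covers $V(C_i)$); since $|\Gamma|=\alpha(T)\ge k$, the equality $k=\alpha(T)$ forces each $C_i$ to be met by exactly one edge of $\Gamma$, which therefore covers $V(C_i)$, so $|V(C_i)|\le 2$ and $C_i$ meets exactly one $\Gamma$-edge; conversely a $U(\Gamma)$-set has exactly $|\Gamma|=\alpha(T)$ components by definition. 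This characterization does not mention $\Gamma$, so statements $(2)$ and $(3)$ literally coincide and $(2)\Leftrightarrow(3)$; it remains to prove $(1)\Leftrightarrow(3)$. We will also use the unconditional bound $ex(n,T,H)=O(n^{\alpha(T)})$: a graph with more than $(h-2)n$ edges has a subgraph of minimum degree at least $h-1$ and hence a copy of every tree on $h$ vertices, so an $H$-free graph on $n$ vertices has $O_h(n)$ edges, and by the theorem of \cite{Al} it then contains $O(n^{\alpha(T)})$ copies of the bipartite graph $T$.

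\emph{The direction $(3)\Rightarrow(1)$.} Fix a minimum edge cover $\Gamma$ and a $U(\Gamma)$-set $U$ whose $(U,h)$ blow-up of $T$ is $H$-free, and let $C_1,\dots,C_{\alpha(T)}$ be the components of $T\setminus U$. For each $n$ form $G_n$ by fixing the vertices of $U$ and attaching $N=\Theta(n)$ pairwise vertex-disjoint copies of each $C_i$ to $U$, each joined to $U$ exactly as $C_i$ is in $T$, so that $|V(G_n)|=n$. Any copy of $H$ in $G_n$ has $h$ vertices, hence meets at most $h$ of the component-copies and so lies in the subgraph induced by $U$ together with those copies; that subgraph is isomorphic to a subgraph of the $(U,h)$ blow-up of $T$ and is therefore $H$-free, a contradiction, so $G_n$ is $H$-free. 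On the other hand the induced subgraph on $U$ together with one chosen copy of each $C_i$ is isomorphic to $T$, and distinct choices give distinct vertex sets, so $G_n$ contains at least $N^{\alpha(T)}=\Omega(n^{\alpha(T)})$ copies of $T$; with the upper bound above this yields $ex(n,T,H)=\Theta(n^{\alpha(T)})$.

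\emph{The direction $(1)\Rightarrow(3)$.} By contraposition, assume no $U$ with $T\setminus U$ having $\alpha(T)$ components has an $H$-free $(U,h)$ blow-up; we must show $ex(n,T,H)=o(n^{\alpha(T)})$. If not, there are $\varepsilon>0$ and arbitrarily large $H$-free graphs $G$ on $n$ vertices with at least $\varepsilon n^{\alpha(T)}$ copies of $T$, and it suffices to find inside $G$ a copy of the $(U,h)$ blow-up of $T$ for some $U$ with $T\setminus U$ having $\alpha(T)$ components, since such a blow-up, being a subgraph of the $H$-free graph $G$, would be $H$-free. We find it by induction on $h=|V(H)|$, the case $h=2$ being vacuous as then $G$ is edgeless. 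For $h\ge 3$ fix a leaf $\ell$ of $H$: for every vertex $b$ of $G$ the graph $G[N(b)]$ is $(H-\ell)$-free (a copy of $H-\ell$ in $N(b)$ together with $b$ playing the role of $\ell$ would be a copy of $H$), hence has $O_h(\deg b)$ edges and, by the inductive hypothesis, only controllably many copies of the relevant subgraphs of $T$. Fix a threshold $D=D(\varepsilon,T,H)$ and let $B$ be the set of vertices of degree at least $D$, so $|B|=O_h(n/D)$. Classifying copies of $T$ by the subset $S\subseteq V(T)$ mapped into $B$ -- the image of $S$ can be placed in $O((n/D)^{|S|})$ ways, each component of $T\setminus S$ attached to $S$ is then embedded inside some $G[N(b)]$ with $b\in B$ (bounded by induction), and the remainder lies in the low-degree part of $G$ with only $O_D(1)$ placements per vertex -- one shows, after optimising over $S$ and choosing $D$ large, that unless $G$ already contains the sought blow-up the total number of copies of $T$ is $o(n^{\alpha(T)})$. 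Indeed $\Omega(n^{\alpha(T)})$ copies can occur only if for a single $S$ and a single placement of its image each of the $\alpha(T)$ ``free'' components of $T\setminus S$ is realised in $\Omega(n)$ suitably disjoint ways; by the characterization above such an $S$ is a $U(\Gamma)$-set (equivalently, $T\setminus S$ has $\alpha(T)$ components), and taking $h$ realisations of each component produces a copy of the $(S,h)$ blow-up of $T$ in $G$ -- the desired contradiction. Combined with $(2)\Leftrightarrow(3)$ and $(3)\Rightarrow(1)$, this establishes the equivalence of $(1)$, $(2)$, $(3)$.

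\textbf{Main obstacle.} The crux is the induction in $(1)\Rightarrow(3)$. A tree-free graph may have vertices of nearly linear degree, so the ``rigid'' part $S$ of a copy of $T$ cannot be frozen by a single pigeonhole step but must be revealed gradually by descending into the neighbourhoods of high-degree vertices, where the forbidden tree loses one leaf at each stage; keeping track of independence numbers and of the disjointness of the component-copies through this recursion, and verifying that the set $S$ which finally emerges indeed has $T\setminus S$ with exactly $\alpha(T)$ components, is where the real work lies.
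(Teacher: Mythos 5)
Your handling of the easy parts is fine, and your reduction of (2) and (3) to a single $\Gamma$-free statement is a nice observation the paper does not make explicitly: since every minimum edge cover of the bipartite graph $T$ has $|\Gamma|=\alpha(T)$, and each component of $T\setminus U$ meets at least one edge of $\Gamma$ while each edge of $\Gamma$ meets at most one component, a set $U$ is a $U(\Gamma)$-set for one minimum edge cover if and only if it is one for all of them (equivalently, iff $T\setminus U$ has exactly $\alpha(T)$ components). Your $(3)\Rightarrow(1)$ also matches the paper: the blow-up construction for the lower bound, and the linear edge bound plus \cite{Al} for the upper bound.

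The gap is in $(1)\Rightarrow(3)$, which is the entire content of the theorem, and there your argument is both incomplete and, at its one concrete step, incorrect. You classify copies of $T$ by the set $S$ of vertices mapped into the high-degree part $B$ and then assert that $\Omega(n^{\alpha(T)})$ copies force ``a single $S$ and a single placement of its image'' for which each of the $\alpha(T)$ components of $T\setminus S$ is realised $\Omega(n)$ ways. That pigeonhole fails: the number of placements of $S$ is itself polynomial in $n$ (roughly $(n/D)^{|S|}$), so the total count can be $\Omega(n^{\alpha(T)})$ while every individual placement realises each component only $n^{1-c}$ times; for instance $(n/D)\cdot\bigl(n^{1-1/\alpha(T)}\bigr)^{\alpha(T)}=n^{\alpha(T)}/D$. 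Moreover, even a placement with $\Omega(n)$ realisations per component does not yield a blow-up: you need a product structure, i.e.\ disjoint sets of realisations, one per component, such that every transversal is a copy of $T$ and the $h$ chosen realisations of each component are pairwise disjoint; and you never justify that the set $S$ which emerges has $T\setminus S$ with exactly $\alpha(T)$ components rather than fewer components each realised superlinearly often. Extracting this product structure from a raw count is exactly where the paper's proof does its work: it encodes each (suitably ordered) copy of $T$ by the $\alpha(T)$ edges playing the roles of the edges of a minimum edge cover $\Gamma$, builds an $\alpha(T)$-uniform hypergraph on $E(G)$ with $\Omega(n^{\alpha(T)})$ hyperedges on $O(n)$ vertices, applies Erd\H{o}s's theorem \cite{Er} to find a complete $\alpha(T)$-partite subhypergraph with parts of size $h^2$, and then reads off $U$ by a star-versus-matching (K\"onig) analysis of each part, using $H$-freeness (no $K_{h,h}$) to rule out components of $T\setminus U$ meeting two edges of $\Gamma$. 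Your ``induction on $h$ by peeling a leaf of $H$ and descending into neighbourhoods'' contains no substitute for this step; as written, the hard direction is a plan rather than a proof.
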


It is worth noting that for $T\ne K_2$ the function $ex(n,T,H)$ behaves
very differently from its well studied relative $ex(n,H)=ex(n,K_2,H)$.
In particular, it is easy to see that for any graph $H$ with at least
$2$ edges, if $2H$ denotes the vertex disjoint union of two copies of
$H$, then $ex(n,H)$ and $ex(n,2H)$ have the same order of magnitude. In
contrast, if, for example, $H=C_5$ then by (\ref{e11}),
$ex(n,K_3,H)=\Theta(n^{3/2})$ and it is not difficult to show that
$ex(n,K_3,2H)=\Theta(n^2)$.  Similarly, it is known that for any graph
$H$, $ex(n,H)$ is either quadratic in $n$ or is at most
$n^{2-\epsilon(H)}$ for some fixed $\epsilon(H) >0$, whereas it is not
difficult to deduce from the results of Ruzsa and Szemer\'edi in
\cite{RSz} that for the graph $H$ consisting of two triangles sharing an
edge $n^{2-o(1)} \leq ex(n,K_3,H) \leq o(n^2)$ as shown in Section
\ref{sec:triangles}.

The rest of this paper is organized as follows.  In Section 2 we
consider the dense case, describing the simple characterization of pairs
of graphs $T$ and $H$ so that $ex(n,T,H)=\Theta(n^t)$ with $t$ being the
number of vertices of $T$, and establishing an Erd\H{o}s-Stone type
result for $ex(n,K_t,H)$ when $\chi(H)>t$. In Section 3 we study the
case $T=K_3$, proving Theorem \ref{t11} and Proposition \ref{p13}. The
proof of Theorem \ref{thm:K_mK_s,t} is given in Section 4, together with
several related results, and the proofs of Theorems \ref{thm:2trees} and
\ref{thm:TreeAndBi} are described in Section 5. The final Section 6
contains some concluding remarks and open  problems.

\section{The dense case} \label{sec2}

The case where both $T$ and $H$ are complete graphs is studied by
Erd\H{o}s in \cite{Er62} where he determines that: $$
ex(n,K_t,K_k)=\sum_{0\leq i_1\leq \dots \leq i_t\leq k-2}
\prod_{r=1}^{t} \lfloor \frac{n+i_r}{k-1} \rfloor. $$ A similar (though
less accurate) result can be obtained for general graphs. We proceed
with the simple details.

An $s$ blow-up of a graph  $H$ is the graph obtained by replacing each
vertex $v$ of $H$ by an independent set $W_v$ of size $s$, and each edge
$uv$ of $H$ by a complete bipartite graph between the corresponding two
independent sets $W_u$  and $W_v$.

As this is going to be useful throughout the paper, denote the number of
copies of a fixed graph $H$ in a graph $G$ by $\mathcal{N}(G,H)$.
\begin{proposition} \label{prop:ChromNum} Let $T$ be a fixed graph with
$t$ vertices. Then $ex(n,T,H) =\Omega(n^t)$ iff $H$ is not a subgraph of
a blow-up of $T$. Otherwise, $ex(n, T,H) \leq n^{t-\epsilon(T,H)}$ for
some $\epsilon(T,H) >0$. \end{proposition}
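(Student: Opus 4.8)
The plan is to prove the two directions separately, using a blow-up construction for the lower bound and the graph removal lemma (or a hypergraph removal / counting argument) for the upper bound. First suppose $H$ is not a subgraph of any blow-up of $T$. I would take the $s$ blow-up $T^{(s)}$ of $T$ with $s=\lceil n/t\rceil$, so that $T^{(s)}$ has at most $n$ vertices (pad with isolated vertices if necessary). The key observation is that a blow-up of $T$ contains no copy of $H$: any subgraph of $T^{(s)}$ is itself a subgraph of a blow-up of $T$, so if $H\subseteq T^{(s)}$ then $H$ is a subgraph of a blow-up of $T$, contradicting our assumption. Thus $T^{(s)}$ is $H$-free. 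It remains to count copies of $T$ in $T^{(s)}$: choosing one vertex from each of the $t$ independent sets corresponding to the vertices of $T$ yields an induced copy of $T$, and there are $s^t = \Theta(n^t)$ such choices (one should note these are genuinely distinct labelled copies, and each unlabelled copy is counted a bounded number of times depending only on $\mathrm{Aut}(T)$). Hence $ex(n,T,H)=\Omega(n^t)$.

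For the converse, suppose $H$ is a subgraph of some blow-up of $T$, say of the $s_0$ blow-up for some fixed $s_0=s_0(T,H)$. I claim that any $H$-free graph $G$ on $n$ vertices has only $o(n^t)$ copies of $T$, and in fact at most $n^{t-\epsilon}$ for a suitable $\epsilon=\epsilon(T,H)>0$. The mechanism is the following supersaturation-type statement: if $G$ contains at least $\delta n^t$ copies of $T$ for a constant $\delta>0$, then $G$ contains a copy of the $s_0$ blow-up $T^{(s_0)}$, and hence a copy of $H$, contradicting $H$-freeness. To see why many copies of $T$ force a blow-up, one can argue by a standard Kővári–Sós–Turán / dependent-random-choice style iteration, or equivalently via the graph removal lemma applied to $T$: if the number of copies of $T$ in $G$ exceeds the threshold at which the removal lemma guarantees that one cannot delete few edges to destroy all copies of $T$, then in particular $G$ is $T$-rich in a robust sense and one can greedily build disjoint expansions of each vertex of $T$ into an independent set of size $s_0$ with all required complete bipartite connections. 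Quantitatively, a clean way to get the polynomial saving $n^{t-\epsilon}$ is to apply the Erdős–Simonovits / Kővári–Sós–Turán bound to the blow-up: since $T^{(s_0)}$ is a bipartite-type "degenerate" pattern with a bounded number of vertices whenever ... actually more simply, $T^{(s_0)}$ is a fixed graph, so $ex(n,T^{(s_0)}) \le n^{2-\epsilon'}$ is not quite enough — instead I would use the Erdős box-type counting lemma: a graph with $N$ copies of $T$ and $n$ vertices contains $\Omega_{s_0}(N^{s_0}/n^{t s_0 - t})$ copies of $T^{(s_0)}$ once $N \ge C n^{t - 1/s_0 \cdot (\text{something})}$, forcing $N \le n^{t-\epsilon}$ when $G$ is $T^{(s_0)}$-free.

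The main obstacle is making the upper-bound direction quantitative enough to extract the explicit polynomial saving $n^{t-\epsilon(T,H)}$ rather than merely $o(n^t)$; the removal lemma by itself gives only $o(n^t)$. I expect the cleanest route is the tensor-power / counting-lemma trick: let $f(G)=\mathcal{N}(G,T)$; by a convexity (Cauchy–Schwarz / Jensen) argument applied along the $t$ "coordinates" of a copy of $T$, one shows that the number of copies of the $s_0$ blow-up is at least roughly $(f(G)/n^{t})^{s_0^{?}} \cdot n^{t}$ up to lower-order corrections, so $T^{(s_0)}$-freeness — a fortiori $H$-freeness — forces $f(G) \le n^{t - \epsilon}$ with $\epsilon$ depending only on $t$ and $s_0(T,H)$. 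I would carry out the steps in the order: (1) reduce "$H\subseteq$ blow-up of $T$" to "$H\subseteq T^{(s_0)}$" for an explicit $s_0$; (2) prove the lower bound via the padded $s$ blow-up and the automorphism-counting remark; (3) prove the blow-up counting lemma bounding $\mathcal{N}(G,T^{(s_0)})$ from below in terms of $\mathcal{N}(G,T)$; (4) combine to conclude $ex(n,T,H)\le n^{t-\epsilon(T,H)}$, checking that the exponent is a genuine constant.
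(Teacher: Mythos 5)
Your proposal follows essentially the same route as the paper: the lower bound via the $\lfloor n/t\rfloor$-blow-up of $T$ is identical, and the upper bound via the observation that $\Omega(n^{t-\epsilon})$ copies of $T$ force an $s_0$-blow-up of $T$ (hence a copy of $H$) is exactly the paper's argument. The correct tool here, which you circle around but should commit to, is Erd\H{o}s's 1964 theorem on generalized graphs: a $t$-uniform hypergraph on $n$ vertices with more than $n^{t-\epsilon(t,s)}$ edges contains a complete $t$-partite subhypergraph with all parts of size $s$. The removal-lemma and K\H{o}v\'ari--S\'os--Tur\'an detours in your write-up are dead ends for the polynomial saving, as you yourself note.

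One technical point you omit and should add: before forming the hypergraph of copies of $T$, you must fix the \emph{roles} of the vertices. The paper does this by randomly partitioning $V(G)$ into $t$ classes $V_1,\dots,V_t$ and keeping only the (at least $m/t^t$) copies of $T$ in which the vertex playing the role of $u_i$ lies in $V_i$; each such copy becomes an edge of a $t$-partite $t$-uniform hypergraph. Without this step, the complete $t$-partite subhypergraph produced by Erd\H{o}s's theorem need not correspond to a blow-up of $T$, since different edges through the same classes could realize $T$ with inconsistent vertex assignments. Your alternative formulation as a supersaturation counting lemma for $\mathcal{N}(G,T^{(s_0)})$ in terms of $\mathcal{N}(G,T)$ would also work, but you assert it rather than prove it; the partition-plus-hypergraph argument is the cleanest way to make it rigorous.
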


\begin{proof} If $H$ is not a subgraph of a blow-up of $T$ then the
graph $G$ which is the $\ell=\lfloor n/t \rfloor$-blow-up of $T$
contains no copy of $H$ and yet includes at least $\ell^t = \Omega(n^t)$
copies of $T$. This establishes the first part of the proposition.

To prove the second part, assume that $H$ is a subgraph of the
$s$-blow-up of $T$. We have to show that in this case any $H$-free graph
$G=(V,E)$ on $n$ vertices  contains less than $n^{t-\epsilon}$ copies of
$T$ for some $\epsilon=\epsilon(T,H) >0$. Indeed, suppose that $G$
contains $m$ copies of $T$. Let $V=V_1 \cup V_2 \cup \cdots \cup V_t$ be
a random partition of $V$ into $t$ pairwise disjoint classes. Let
$u_1,u_2 , \ldots ,u_t$ denote the vertices of $T$. Then the expected
number of copies of $T$ in which $u_i$ belongs to $V_i$ for all $i$ is
$m/t^t$. Thus we can fix a partition $V=V_1 \cup V_2 \cup \cdots \cup
V_t$ so that the number of such copies of $T$ is at least $m/t^t$.
Construct a $t$-uniform, $t$-partite hypergraph  on the classes of
vertices $V_1, \ldots ,V_t$ by letting a set of vertices $v_1,\ldots
,v_t$ with $v_i \in V_i$ be an edge iff $G$ contains a copy of $T$ on
these vertices, where $v_i$ plays the role of $u_i$ for each $i$.
Therefore, this hypergraph contains at least $m/t^t$ edges. By a well
known result of Erd\H{o}s \cite{Er}, if the number of edges exceeds
$n^{t-\epsilon}$ for an appropriate $\epsilon= \epsilon(t,s) >0$, then
this hypergraph contains a complete $t$-partite hypergraph with classes
of vertices $U_i \subset V_i$, $|U_i|=s$ for all $i$. This gives an
$s$-blow-up of $T$ in the original graph $G$, providing a copy of $H$ in
it, contradiction. It follows that $m \leq t^t n^{t-\epsilon}$,
completing the proof. \end{proof} When $T=K_t$ is a complete graph, $H$
is not a subgraph of any blow-up of $T=K_t$ if and only if $\chi(H)>t$.
In this case it is not difficult to determine the asymptotic value of
$ex(n,T,H)$ up to a lower order additive term, as we show next. Note
that the case $t=2$ is the classical result of Erd\H{o}s and Stone
\cite{ES}.

\begin{proposition} For any graph $H$, $ex(n,K_t,H)=\Omega(n^t)$ if and
only if $\chi(H)>t$. Furthermore, if indeed $\chi(H)=k>t$ then
$ex(n,K_t,H)=(1+o(1))\binom{k-1}{t}(\frac{n}{k-1})^t$ \end{proposition}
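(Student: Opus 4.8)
The plan is to prove both directions of the equivalence and then pin down the constant. For the first part, note that $\chi(H) > t$ means $H$ is not a subgraph of any complete $t$-partite graph, hence not a subgraph of any blow-up of $K_t$; so by Proposition~\ref{prop:ChromNum} we immediately get $ex(n,K_t,H) = \Omega(n^t)$. Conversely, if $\chi(H) \le t$, then $H$ embeds in the complete $t$-partite graph with each part of size $|V(H)|$, i.e. in a blow-up of $K_t$, and Proposition~\ref{prop:ChromNum} gives $ex(n,K_t,H) \le n^{t-\epsilon}$, so it is not $\Omega(n^t)$. That settles the ``if and only if''.

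For the asymptotic formula when $\chi(H) = k > t$, I would prove matching lower and upper bounds. The lower bound is a direct construction: take the complete $(k-1)$-partite graph $G$ on $n$ vertices with parts as equal as possible (sizes $\lfloor n/(k-1) \rfloor$ or $\lceil n/(k-1) \rceil$). This graph is $(k-1)$-colorable hence has no subgraph of chromatic number $k$, so it is $H$-free. The number of copies of $K_t$ in it is obtained by choosing $t$ of the $k-1$ parts and one vertex from each; this is $\binom{k-1}{t}\left(\frac{n}{k-1}\right)^t (1+o(1))$, since the product of $t$ part-sizes is $(1+o(1))(n/(k-1))^t$ and the number of ways to pick the parts is $\binom{k-1}{t}$ (lower-order terms from unequal parts and from choosing more than one vertex in a part are absorbed into the $o(1)$).

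The upper bound is where the real work is, and it is the step I expect to be the main obstacle. The natural tool is the Erd\H os--Stone--Simonovits theorem together with a standard supersaturation/averaging argument, or alternatively the graph removal lemma as the authors suggest in the introduction. Concretely: let $G$ be an $H$-free graph on $n$ vertices. Since $\chi(H) = k$, $G$ contains no copy of $H$, but I want to bound $\N(G,K_t)$. The idea is that almost all edges of $G$ lie in a subgraph that is ``close'' to $(k-1)$-partite. By the removal lemma applied to $H$ (or to $K_k$ after first noting $G$ is also almost $K_k$-free in the appropriate sense), after deleting $o(n^2)$ edges we may assume $G$ has no copy of $K_k$ at all; actually more care is needed since $H$-free does not literally imply $K_k$-free. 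The cleaner route: apply the removal lemma for $H$ to conclude that $G$ can be made $H$-free-by-a-wide-margin, then invoke the Erd\H os--Stone bound $e(G) \le (1 - \frac{1}{k-1} + o(1))\binom{n}{2}$; combined with a stability/counting argument (e.g. counting $K_t$'s via the Kruskal--Katona or a direct convexity argument on the partition structure), one shows $\N(G,K_t) \le (1+o(1))\binom{k-1}{t}(n/(k-1))^t$.

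Let me restructure the upper bound more carefully, since ``$H$-free'' is weaker than ``$K_k$-free'' and I should not fudge it. I would argue as follows. Fix $\delta > 0$. Every copy of $K_t$ in $G$ sits on $t$ vertices; I will bound the number of $t$-sets spanning a clique. Take a random partition $V = V_1 \cup \dots \cup V_t$ as in the proof of Proposition~\ref{prop:ChromNum} and pass to the $t$-partite $t$-uniform hypergraph of transversal $K_t$'s; if $\N(G,K_t) > \delta n^t$ then this hypergraph is dense, and by the Erd\H os result on $t$-partite hypergraphs it contains a large complete $t$-partite sub-hypergraph, i.e. an $s$-blow-up of $K_t$ in $G$ for $s$ as large as we like. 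But an $s$-blow-up of $K_t$ with $s \ge |V(H)|$ need not contain $H$ when $\chi(H) = k > t$ --- so this shows only that $\N(G,K_t) = o(n^t)$ is \emph{not} forced this way. Hence the linear-algebra/blow-up argument is too weak, and the right tool is genuinely Erd\H os--Stone--Simonovits plus supersaturation: I claim $G$ contains no copy of $K_k$ with all $\binom{k}{2}$ edges ``robust'' --- more precisely, since a blow-up of $K_k$ with each part of size $|V(H)|$ contains $H$ (because $\chi(H) = k$ means $H \subseteq $ complete $k$-partite graph), the supersaturation form of Erd\H os--Stone says that if $e(G) \ge (1 - \frac{1}{k-1} + \delta)\binom{n}{2}$ then $G$ contains such a blow-up and hence $H$; so the $H$-free hypothesis forces $e(G) \le (1 - \frac{1}{k-1} + \delta)\binom{n}{2}$ for every $\delta$ once $n$ is large, and likewise every subgraph of $G$ on a linear number of vertices is subject to the same bound. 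One then extracts the $K_t$-count bound from this density bound by a weighted Turán-type inequality (Zykov symmetrization or the Kruskal--Katona theorem applied to the clique complex of $G$), yielding $\N(G,K_t) \le (1+o(1))\binom{k-1}{t}(n/(k-1))^t$; the main obstacle is carrying out this last extraction cleanly, for which I would cite the standard fact that among all graphs with a given edge density the Turán graph maximizes the number of $K_t$'s (a result of Zykov, or of Erd\H os).
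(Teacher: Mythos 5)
The equivalence and the lower bound in your proposal are fine and match the paper: the first part is immediate from Proposition~\ref{prop:ChromNum} once you observe that $H$ embeds in a blow-up of $K_t$ iff $\chi(H)\le t$, and the balanced complete $(k-1)$-partite graph gives the lower bound.

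The upper bound, however, has a genuine gap at exactly the step you flagged as the main obstacle. The route ``Erd\H{o}s--Stone gives $e(G)\le(1-\frac{1}{k-1}+o(1))\binom{n}{2}$, then extract the $K_t$-count by Kruskal--Katona or by the fact that the Tur\'an graph maximizes the number of $K_t$'s at a given edge density'' does not work, because that ``standard fact'' is false: at a fixed number of edges the $K_t$-count is maximized by a clique plus isolated vertices (this is what Kruskal--Katona actually says), not by a Tur\'an graph. Concretely, for $k=4$, $t=3$, a clique on about $\sqrt{2/3}\,n\approx 0.82n$ vertices has $(\frac23+o(1))\binom n2$ edges and about $0.09n^3$ triangles, whereas the target bound is $n^3/27\approx 0.037n^3$; so Kruskal--Katona applied to the clique complex overshoots by a constant factor. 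The correct statement (Zykov/Erd\H{o}s) is that among \emph{$K_k$-free} graphs the Tur\'an graph maximizes $\mathcal N(G,K_t)$, and Zykov symmetrization needs $K_k$-freeness (it does not preserve $H$-freeness for general $H$). So your argument is missing the reduction from ``$H$-free'' to ``$K_k$-free,'' and an edge-density bound alone cannot substitute for it.

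The fix is precisely the route you mentioned and then set aside as needing ``more care,'' and it is what the paper does: since $\chi(H)=k$, $H$ is a subgraph of a blow-up of $K_k$, so Proposition~\ref{prop:ChromNum} applied with $T=K_k$ gives $\mathcal N(G,K_k)\le ex(n,K_k,H)=o(n^k)$ for any $H$-free $G$. The graph removal lemma (for $K_k$) then lets you delete $o(n^2)$ edges to obtain a genuinely $K_k$-free graph $G'$, losing only $o(n^2)\cdot O(n^{t-2})=o(n^t)$ copies of $K_t$, and Erd\H{o}s's exact formula for $ex(n,K_t,K_k)$ finishes the proof. No supersaturation, stability, or Kruskal--Katona argument is needed once this reduction is in place.
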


\begin{proof} The first part follows directly from Proposition
\ref{prop:ChromNum}. To prove the second part fix $t$ and $H$, and
suppose that $\chi(H)=k>t$. We have to show that
$ex(n,K_t,H)=(1+o(1))\binom{k-1}{t}(\frac{n}{k-1})^t$.

The lower bound is obtained by taking a T\'uran graph with no copy of
$K_{k}$. To prove the upper bound, assume $G$ is an $H$-free graph on
$n$ vertices satisfying $\N(G,K_t)=ex(n,K_t,H)$. By the previous
proposition \ref{prop:ChromNum}, as $G$ is $H$-free $\N(G,K_k)\leq
ex(n,K_k,H)=o(n^k)$.

Using the graph removal lemma (as stated in \cite{AFKS} following
\cite{RSz} and improved in \cite{Fo}) we can remove $o(n^2)$ edges from
$G$ and get a new graph $G'$ which is $K_k$-free. The removal of
$o(n^2)$ edges from $G$ can remove at most $o(n^2)O(n^{t-2})=o(n^t)$
copies of $K_t$, thus $\N(G',K_t)=(1+o(1))\N(G,K_t)$. As $G'$ is $K_k$
free one has $\N(G',K_t)\leq ex(n,K_t,K_k)=\sum_{0\leq i_1\leq \dots
\leq i_t\leq k-2} \prod_{r=1}^{t} \lfloor \frac{n+i_r}{k-1} \rfloor$.
This yields the needed result. \end{proof}

\section{Maximizing the number of triangles} \label{sec:triangles}

\subsection{ Extended friendship graphs   }

In this subsection we prove Theorem \ref{t11}. Here and throughout the
paper, we often do not make any serious attempt to optimize the absolute
constants. We also assume, whenever this is needed, that $n$ is
sufficiently large.

We first prove two lemmas.

\begin{lemma} \label{l21} Let $G=(V,E)$ be a graph with at least
$(9c-15)(c+1)n$ triangles and at most $n$  vertices, then it contains a
copy of $F_c$. \end{lemma}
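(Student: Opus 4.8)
The plan is to prove the contrapositive: an $F_c$-free graph $G$ on at most $n$ vertices has strictly fewer than $(9c-15)(c+1)n$ triangles. Recall that $G$ contains $F_c$ exactly when some vertex $v$ has a matching of size $c$ inside its neighbourhood $G[N(v)]$; so I assume that for every vertex $v$ the link $G[N(v)]$ has matching number at most $c-1$. Fixing a maximum matching $M_v$ in $G[N(v)]$ and letting $S_v=V(M_v)\subseteq N(v)$, one gets a set of size at most $2(c-1)$ whose complement $N(v)\setminus S_v$ is an independent set of $G$. This single structural fact (``every link is covered by $2(c-1)$ vertices'') is the engine of the whole argument.

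Next I would split the edges into \emph{heavy} ones, with $|N(u)\cap N(v)|>2(c-1)$, and \emph{light} ones, with $|N(u)\cap N(v)|\le 2(c-1)$. The key observation is that if $uv$ is heavy then $u\in S_v$ and $v\in S_u$: if $u$ lay in the independent set $N(v)\setminus S_v$, then every common neighbour of $u$ and $v$ would have to lie in $S_v$, forcing $|N(u)\cap N(v)|\le 2(c-1)$. Hence each vertex lies on at most $2(c-1)$ heavy edges, so the heavy subgraph has maximum degree at most $2(c-1)$. Applying the same covering idea \emph{inside an arbitrary neighbourhood} $N(w)$ shows that at most $4(c-1)^2$ heavy edges can have both endpoints in $N(w)$ — each such edge meets $S_w$, and each vertex of $S_w$ sits on at most $2(c-1)$ heavy edges — and, by an identical count, at most $4(c-1)^2$ edges $uv$ with $u,v\in N(w)$ have both $uw$ and $vw$ light (here one uses that $ws$ light means $|N(w)\cap N(s)|\le 2(c-1)$).

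Then I would count triangles in two groups. A triangle containing at least one heavy edge is accounted for by the incidences between triangles and heavy edges, so the number of such triangles is at most $\sum_{uv\ \mathrm{heavy}}|N(u)\cap N(v)|$; rewriting this as $\sum_w(\text{number of heavy edges with both endpoints in }N(w))$ bounds it by $4(c-1)^2n$. A triangle all of whose edges are light is counted, at each of its three vertices $x$, among the edges spanned by the light-neighbourhood $N_L(x)=\{u\in N(x): ux\ \text{light}\}$, so the number of these triangles is at most $\tfrac13\sum_x 4(c-1)^2=\tfrac43(c-1)^2n$. Every triangle lies in exactly one group, so $G$ has at most $\tfrac{16}{3}(c-1)^2n$ triangles, which is strictly below $(9c-15)(c+1)n$ for every $c\ge 2$ (the gap is $\tfrac13(11c^2+14c-61)n>0$), giving the desired contradiction.

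The step I expect to be the main obstacle is precisely the one that forces the heavy/light dichotomy. The naive identity $3\cdot(\#\text{triangles})=\sum_v e(G[N(v)])\le (c-1)\sum_v\deg(v)=2(c-1)|E(G)|$ is useless by itself, since an $F_c$-free graph can have $\Theta(n^2)$ edges (a complete bipartite graph is even triangle-free). Splitting off the heavy edges is what confines the triangle-rich part of $G$ to a subgraph of bounded degree and thereby replaces $|E(G)|$ by $O(c^2 n)$; making the ``covered by $2(c-1)$ vertices'' argument do double duty — once globally to bound the degrees in the heavy subgraph, and once locally inside each $N(w)$ to bound heavy (and all-light) configurations there — is the crux of the proof.
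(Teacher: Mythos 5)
Your proof is correct, and it takes a genuinely different route from the paper's. The paper argues directly: it fixes a maximal family of edge-disjoint triangles, notes that if no vertex lies in $c$ of them the family can be properly coloured with $3c-5$ colours so that same-coloured triangles are vertex-disjoint, uses pigeonhole to find one colour class whose ``clusters'' (a central coloured triangle together with all triangles sharing an edge with it) account for at least $3(c+1)n$ triangles, and then finds a vertex that is the external (non-central) vertex of $3c$ of these, which yields $c$ triangles from distinct clusters meeting only at that vertex. You instead argue by contrapositive through the link of each vertex: $F_c$-freeness means every $G[N(v)]$ has matching number at most $c-1$, so the vertex set $S_v$ of a maximum matching, of size at most $2(c-1)$, covers all edges of the link; the heavy/light dichotomy then confines every triangle either to one of the at most $4(c-1)^2$ heavy edges inside some neighbourhood or to one of the at most $4(c-1)^2$ doubly-light edges inside some neighbourhood. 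All your intermediate claims check out (in particular the identity $\sum_{uv\ \mathrm{heavy}}|N(u)\cap N(v)|=\sum_w \#\{\text{heavy edges in }G[N(w)]\}$ is an exact double count), and the final comparison $\tfrac{16}{3}(c-1)^2<(9c-15)(c+1)$ holds for all $c\ge 2$, which is the range where the lemma is meaningful for both proofs. Your approach buys a sharper constant --- at most $\tfrac{16}{3}(c-1)^2 n$ triangles in an $F_c$-free graph on at most $n$ vertices, versus the paper's $(9c-15)(c+1)n$ --- and isolates a clean reusable structural fact (every link is edge-covered by $2(c-1)$ vertices) that the paper's cluster argument leaves implicit; the paper's argument, in exchange, produces the copy of $F_c$ constructively rather than by contradiction.
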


\begin{proof}

Take a maximal set of edge-disjoint triangles in $G$, if they contain a
subset of size at least $c$ touching the same vertex then we are done.
Otherwise, one can color these triangles with $3(c-2)+1=3c-5$ colors so
that no two triangles with the same color share a vertex (by simply
coloring each triangle with the smallest available color). Each triangle
in our original graph $G$ shares an edge with one of these colored
triangles, as they form a maximal set, so there is a set of unicolored
triangles with at least $\frac{(9c-15)(c+1)n}{3c-5}=3(c+1)n$ triangles
sharing edges with one of them (where here we are counting the colored
triangles too).

Focusing on the triangles colored in this color and the ones sharing
edges with them, note that there are at least $3(c+1)n$ of those
organized in clusters, with each cluster consisting of one (colored)
central triangle and all others sharing an edge with it. There are at
most $n/3$ central triangles and hence more than $3cn$ triangles are not
central, thus having two vertices in the center and one outside. Call
the outside vertex the external one. There are $3cn$ of them, so there
must be a vertex $v\in V$ which is an external vertex of at least $3c$
triangles. At most $3$ triangles from each cluster can share an external
vertex, so there are $c$ triangles from different clusters sharing this
vertex, and this is the only vertex they share. These $c$ triangles form
a copy of $F_c$, as needed. \end{proof}

\begin{lemma} \label{l22} For every $k>3$ and $n$ large enough there is
a graph $G$ on $n$ vertices with at least $\Omega(n^{1+\frac{1}{k-1}})$
triangles and no cycles of length $i$ for any $i$ between $4$ and $k$.
\end{lemma}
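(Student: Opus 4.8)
Below is how I would attack the statement.

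\medskip
\noindent\emph{Construction.} The plan is to exhibit such a graph explicitly, by planting many triangles into a dense bipartite graph of large girth. First I would fix a bipartite graph $B$, with parts $X$ and $Y$ and $|X|+|Y|=n$, having girth a little more than $2k$ and $\Omega(n^{1+1/(k-1)})$ edges (the probabilistic deletion method gives such a graph with a somewhat smaller edge count, while incidence graphs of generalized polygons — girth exactly $2k$ — give the optimal count at the price of a cleanup step discussed below). Now form $G$ on the same vertex set by adding, for every $x\in X$, a maximum matching $M_x$ inside the neighbourhood $N_B(x)\subseteq Y$. Each matching edge $yy'\in M_x$ together with $x$ spans a triangle; using the large girth (indeed already $C_4$-freeness) of $B$ one checks that all these triangles are distinct, that $G$ has no other triangles, and that every edge of $G$ lies in at most one triangle. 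Hence $G$ has at least $\tfrac12\sum_{x\in X}(\deg_B(x)-1)=\Omega(e(B))=\Omega(n^{1+1/(k-1)})$ triangles.

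\medskip
\noindent\emph{Absence of short cycles.} The crux is to show $G$ has no cycle of length $i$ for $4\le i\le k$. Given such a cycle $C$, of length $\ell\le k$, I would expand each of its matching edges $yy'\in M_x$ back into the path $y\,x\,y'$ of length $2$ in $B$. After a short reduction eliminating the case where such an expansion backtracks along $C$ (which only shortens $C$, and cannot create a shorter forbidden cycle), what remains is a non-backtracking closed walk $W$ in $B$ of length at most $2\ell\le 2k$. Since $B$ has girth exceeding $2k\ge|W|$, the walk $W$ must be trivial, and tracing this back gives the desired contradiction. (If instead $B$ is taken of girth exactly $2k$ in order to maximise $e(B)$, the same analysis shows the only possible forbidden cycles of $G$ are images of shortest cycles of $B$ all of whose detours through $X$ were ``closed up'' by the chosen matchings; picking the $M_x$ at random makes the expected number of these $o(e(B))$, and deleting one edge from each destroys them while killing only $o(n^{1+1/(k-1)})$ triangles, because every edge lies in at most one triangle.)

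\medskip
\noindent\emph{Main obstacle.} I expect the real work to be precisely this girth bookkeeping: one must check that the expanded walk $W$ is genuinely non-backtracking, that in the borderline length-$2k$ case nothing shorter than a shortest cycle of $B$ can arise, and that the matchings create no accidental short cycles (in particular among the $Y$-vertices) — each of these being a consequence of the girth of $B$ but needing care. A secondary, more external, obstacle is the quantitative input of the first step: securing a bipartite graph with $\Omega(n^{1+1/(k-1)})$ edges and girth essentially $2k$, which for general $k$ leans on the available explicit high-girth constructions.
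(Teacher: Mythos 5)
There is a genuine gap, and it is at the very first step: the bipartite graph $B$ you need does not exist (or is not known to exist) with the edge density your count requires. Your cycle analysis is right that you need girth at least $2k$ in $B$ (a forbidden cycle of $G$ of length $\ell\le k$ can consist entirely of matching edges, and then its expansion is a closed walk of length $2\ell\le 2k$ in $B$), so consider the two variants you offer. If $B$ has girth exceeding $2k$, it is in particular $C_{2k}$-free, so by the Bondy--Simonovits theorem $e(B)=O(n^{1+1/k})=o(n^{1+1/(k-1)})$; a bipartite graph of girth $>2k$ with $\Omega(n^{1+1/(k-1)})$ edges simply cannot exist. If instead $B$ has girth exactly $2k$, the requirement $e(B)=\Omega(n^{1+1/(k-1)})$ sits exactly at the $C_{2k-2}$ extremal threshold, and graphs of girth $2k$ attaining it are known only via generalized $k$-gons, which by Feit--Higman exist only for $k\in\{3,4,6\}$; for $k=5$ and all $k\ge 7$ the existence of such a $B$ is a well-known open problem, so your construction cannot cover ``every $k>3$.'' Finally, the probabilistic-deletion fallback does not give ``a somewhat smaller edge count'': for girth $\ge 2k$ it yields only $\Omega(n^{1+1/(2k-2)})$ edges, so the triangle count it produces is polynomially below the claimed $\Omega(n^{1+1/(k-1)})$.

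The paper avoids this wall by randomizing over triangles rather than edges: each of the $\binom{n}{3}$ potential triangles is selected independently with probability $p=\tfrac12 n^{-(2k-3)/(k-1)}$, and one then deletes a whole triangle from every cycle of length $2\le i\le k$ whose edges come from $i$ distinct selected triangles. The key observation is that these are the only configurations that need to be destroyed (a short cycle using two edges of one triangle can be shortcut through its third edge, and shared pairs of vertices are handled by the $i=2$ term), and a first-moment calculation shows the expected number of such configurations is a constant factor below the expected number of triangles, leaving $\Omega(n^{1+1/(k-1)})$ triangles. That exponent is achievable precisely because the forbidden configurations live at the triangle level, not the edge level, which is the leverage your edge-based construction cannot reproduce. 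If you want a constructive route, it would have to start from an object other than a dense bipartite graph of girth $\ge 2k$.
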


We note that the exponent here can be slightly improved, at least for
some values of $k$. In particular, for $k=4$ the best possible value is
$(1/6+o(1))n^{3/2}$, as can be shown using the Erd\H{o}s-R\'enyi graph
\cite{ER}, or Theorem \ref{Thm:K3vsK_2,t} below with $t=2$. For our
purposes here, however, the above estimate suffices.

\begin{proof} Let $G'$ be a random graph on a  fixed set of $n$ labeled
vertices obtained by choosing, randomly and independently, each of the
${n \choose 3}$ potential triangles on the set of vertices to form a
triangle in $G'$ with probability $p=\frac{1}{2}n^{-\frac{2k-3}{k-1}}$.
Let $X$ be the random variable counting the number of triangles picked,
and for $2\leq i \leq k$ let $Y_i$ denote the random variable counting
the number of cycles of length $i$ in which each edge comes from a
different triangle. (In particular, $Y_2$ counts the number of pairs of
selected triangles that share two vertices).

Note that if we remove one of our chosen triangles from each such cycle,
then the resulting graph will contain no cycle of length between $4$ and
$k$. Indeed, if we have such a cycle using two edges of one triangle
then replacing those by the third edge  will create a shorter cycle,
that cannot exist by assumption. Similarly, a cycle of length $4$ cannot
be created by two triangles if we leave no pair of triangles sharing two
vertices. Put $Z=X-\sum_{i=2}^{k}Y_i$, and note that  it is enough to
show that the expectation of $Z$ is at least $\Omega(n^{1+1/(k-1)})$.
Indeed, if this is the case, then there is a graph $G'$ for which the
value of $Z$ is at least $\Omega(n^{1+1/(k-1)})$. Fixing such a graph
and omitting a triangle from each of the short cycles counted by the
variables $Y_i$ generates a graph $G$ with the desired properties. Since
$\E(X)={n \choose 3} p$ and $$ \E(Y_i)=\frac{n\cdot (n-1)\dots
(n-i+1)(n-2)^i}{2i}p^{i} \leq \frac{(n^2p)^i}{2i}=\frac{n^{i/(k-1)}}{i
2^{i+1}} $$ a simple computation shows that $\E(Z) \geq
(1+o(1))(1/12-1/128)n^{1+1/(k-1)}$, as needed. \end{proof}

We can now prove Theorem \ref{t11}.

\begin{proof}[Proof of Theorem \ref{t11} ] We start by showing that
$ex(n,K_3,H)$ is linear in $n$ for any extended friendship graph. Let
$H$ be an extended friendship graph with $h$ vertices and let $G$ be a
graph on $n$ vertices with at least $c(H)n$ triangles, where $c(H)=
10h^2$. We show that $G$ must contain a copy of $H$.

We first show that $G$ contains a  subgraph with minimum degree at least
$h$. As long as there is a vertex in $G$ of degree smaller than $h$,
omit it. This process must terminate with a nonempty graph containing
more than $9h^2n$ triangles, since the total number of triangles  that
can be omitted this way is smaller than  ${h  \choose 2}n<h^2 n$. We can
thus assume that the minimum degree in $G$ is at least $h$, and that it
has at most $n$ vertices and at least $9h^2 n $ triangles.

By Lemma \ref{l21} $G$ contains a copy of the 2-core of $H$. This copy
can be extended to a copy of $H$. Indeed, if $H$ is disconnected add to
it edges to make it connected (keeping the $2$-core intact). We can now
embed the missing vertices of $H$ in $G$ one by one, starting with the
2-core and always adding a vertex with exactly one neighbor in the
previously embedded vertices. Since the minimum degree in $G$ is at
least $h$  this can be done, providing the required copy of $H$.

To complete the proof of the theorem we have to show that if $H$ is not
a subgraph of an extended friendship graph then there is a graph $G$
with $n$ vertices and $\omega(n)$ triangles containing no copy of $H$.
Note that $H$ is not a subgraph of an extended friendship graph iff it
either contains a cycle of length greater than $3$ or it contains two
vertex disjoint triangles. In the first case, Lemma \ref{l22} provides a
graph $G$ with a superlinear number of triangles containing no copy of $H$.

For the second case let $G$ be the complete $3$-partite graph $K_{1,
\lfloor \frac{n-1}{2} \rfloor, \lceil \frac{n-1}{2} \rceil }$. Here all
the triangles share a common vertex, hence no two are disjoint. As the
number of triangles is $\lfloor \frac{(n-1)^2}{4} \rfloor$, this
completes the proof. \end{proof}

\begin{remark} For any connected graph $H$ with $h$ vertices, an $n$
vertex graph consisting of a disjoint union of $\lfloor n/(h-1) \rfloor$
cliques, each of size $h-1$, contains no copy of $H$ and at least
$\Omega(h^2 n)$ triangles, showing that the estimate in the proof of the
last theorem is tight, up to a constant factor. \end{remark}

\subsection{Cycles}

In this subsection we prove Proposition \ref{p13}, which (slightly)
improves the estimates in \cite{BG} and \cite{GL}. We start with the
proof of part (i).  Let $G=(V,E)$ be a $C_5$-free graph on $n$ vertices
with the maximum possible number of triangles. Clearly we may assume
that each edge of $G$ lies in at least one triangle. Put $|E|=m$ and
$\N(G,K_3)=t$. For each vertex $v\in V$ the graph spanned by its
neighborhood $N(v)$ does not contain a path of length $3$, and thus, by
a  known result of Erd\H{o}s and Gallai \cite{EG}, the number of edges
it spans satisfies $|E(N(v))|\leq d_v$, where $d_v=|N(v)|$ is the degree
of $v$. The number of edges in $N(v)$ is exactly the number of triangles
containing $v$ and therefore \begin{equation} \label{e41} t\leq
\frac{\sum_v d_v}{3}=\frac{2m}{3} \end{equation}

Color the vertices of $G$ randomly and independently, where each vertex
is blue with probability $p$ (which will be chosen later to be $p=1/3$)
and red with probability $1-p$. For each edge $e=uv$ of $G$ choose
arbitrarily one vertex $w=w(e)$ such that $u,v,w$ form a triangle.
Denote by $E'$ the set of edges $e=uv$ of $G$  so that both $u$ and $v$
are colored blue and $w$ is colored red, and denote by $V'$ the set of
all blue vertices. Note that the graph $(V',E')$ on the blue vertices
contains no $C_4$ since otherwise each edge of this $C_4$ forms a
triangle together with a red vertex, providing a copy of $C_5$ in $G$,
which is impossible. Therefore $$ |E'|\leq ex(|V'|,C_4) =
(\frac{1}{2}+o(1))|V'|^\frac{3}{2}. $$ Taking expectation in both sides
and using linearity of expectation and the fact that the binomial random
variable $|V'|$ is tightly concentrated around its mean we get $$
p^2(1-p)m\leq\E(|E'|)\leq (\frac{1}{2}+o(1))(np)^\frac{3}{2}. $$ This is
because for each edge  $uv$, the probability it belongs to $E'$ is
$p^2(1-p)$. Thus $$ m\leq
(\frac{1}{2}+o(1))n^{\frac{3}{2}}\frac{1}{\sqrt{p}(1-p)}. $$ Since the
right hand side is minimized when $p=\frac{1}{3}$ select this $p$ to
conclude that $$ m\leq
(\frac{1}{2}+o(1))n^{\frac{3}{2}}\frac{3\sqrt{3}}{2}. $$ Plugging into
(\ref{e41}) we get $$ t\leq (\frac{1}{2}+o(1))n^{\frac{3}{2}}\sqrt{3}
=\frac{\sqrt{3}}{2}n^{\frac{3}{2}}+o(n^{\frac{3}{2}}) $$ as needed.
\hfill  $\Box$

The proof of part (ii) of Proposition \ref{p13} is similar. Here we do
not optimize the value of the probability $p$ and simply take $p=1/2$,
for small values of $k$ the result can be slightly improved. To get the
precise statement  as stated in the proposition we use an additional
trick. The details follow. 

Let $G=(V,E)$ be a $C_{2k+1}$-free graph on $n$ vertices with the
maximum possible number of triangles. As before, assume that each edge
of $G$ lies in at least one triangle,  and for each edge $e=uv$ of $G$
choose  a vertex $w=w(e)$ so that $u,v,w$ form a triangle in $G$. Put
$|E|=m$ and $\N(G,K_3)=t$. Since the neighborhood of any vertex $v$ of
$G$ contains no path on $2k$ vertices, the Erd\H{o}s-Gallai theorem
implies that it contains at most $(k-1)d_v$ edges, implying that
\begin{equation} \label{e42} t\leq \frac{\sum_v
(k-1)d_v}{3}=\frac{2(k-1)m}{3} \end{equation}

Split the vertices of $G$ into $m=\lceil n/2 \rceil$ disjoint subsets,
where if $n$ is even each subset is of size $2$ and otherwise one subset
is of size $1$. If a subset chosen is an edge $uv$ of the graph $G$, we
ensure that if $w=w(uv)$ then $u=w(vw)$ and $v=w(uw)$. As the subsets
are disjoint, it is easy to check that such a choice is possible. Now
color the vertices randomly red and blue: in each subset one vertex is
colored red and the other is blue (where each of the two possibilities
are equally likely). If $n$ is odd then  the vertex in the  last class
gets a random color.  As before, let $E'$ denote the set of edges $e=uv$
of $G$  so that both $u$ and $v$ are colored blue and $w=w(e)$ is
colored red, and denote by $V'$ the set of all blue vertices. The graph
$(V',E')$ contains no $C_{2k}$ since otherwise  we get a copy of
$C_{2k+1}$ in $G$, which is impossible. Thus \begin{equation}
\label{e43} |E'|\leq ex(|V'|,C_{2k}) \leq ex(\lceil n/2 \rceil,C_{2k})
\end{equation} since here $|V'|$ is always of cardinality either $\lceil
n/2 \rceil$ or $\lfloor n/2 \rfloor$.

We claim that the expected  cardinality of  $E'$ is at least $m/8$.
Indeed, if for an edge $uv$ with $w=w(uv)$ no pair of the three vertices
$u,v,w$ lie in a single subset, then the probability that $u,v$ are blue
and $w$ is red is exactly $1/8$. For the other edges note that if $uv$
forms one of our subsets and $w=w(uv)$, then the probability that $uv$
lies in $E'$ is $0$, but the probability that $uw$ lies in $E'$ is $1/4$
and so is the probability that $vw$ lies in $E'$. Hence the contribution
from these three edges to the expectation of $|E'|$ is $2/4>3/8$.
Linearity of expectation thus implies  that the expected value of $|E'|$
is at least $m/8$ and thus by (\ref{e43}), $m/8 \leq ex(\lceil n/2
\rceil,C_{2k})$,  and by (\ref{e42}) $$ t=\N(G,K_3) \leq
\frac{16(k-1)}{3} ex(\lceil n/2 \rceil,C_{2k}) $$ completing  the proof.
\hfill $\Box$


\begin{remark} Bondy and Simonovits \cite{BS} proved that
$ex(n,C_{2k})\leq O(kn^{1+\frac{1}{k}})$. This has recently been
improved by Bukh and Jiang \cite{BJ} to $ex(n,C_{2k}) \leq O( \sqrt {k
\log k}~ n^{1+\frac{1}{k}})$. Thus the upper bound obtained from the
above proof is $ex(n,K_3,C_{2k+1}) \leq O(k^{3/2} \sqrt {\log k}~
n^{1+1/k})$. \end{remark}

\subsection{Books}

An $s$-book is the graph  consisting of $s$ triangles, all  sharing one
edge.

\begin{prop} \label{p44} For each fixed $s \geq 2$, if $H=H(s)$ is the
$s$-book then $n^{2-o(1)} \leq ex(n,K_3,H)=o(n^2)$ \end{prop}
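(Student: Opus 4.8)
The two bounds are proved by different means: the lower bound $n^{2-o(1)}\le ex(n,K_3,H(s))$ is the Ruzsa--Szemer\'edi construction, and the upper bound $ex(n,K_3,H(s))=o(n^2)$ combines a trivial edge--triangle incidence count with the triangle removal lemma. Throughout, ``$G$ is $H(s)$-free'' is used in the equivalent form ``every edge of $G$ lies in at most $s-1$ triangles''.

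For the upper bound I would argue as follows. First, counting incidences between edges and triangles gives $3\,\N(G,K_3)\le (s-1)|E(G)|\le (s-1)\binom n2$, so $\N(G,K_3)\le (s-1)n^2/6$ for every $H(s)$-free graph $G$ on $n$ vertices. Now suppose, for contradiction, that for some fixed $\varepsilon>0$ there are arbitrarily large $n$ with an $H(s)$-free graph $G$ on $n$ vertices having $\N(G,K_3)\ge \varepsilon n^2$. Fix such an $n$ and $G$, and take a maximal family $\mathcal M$ of pairwise edge-disjoint triangles of $G$. By maximality every triangle of $G$ shares an edge with a triangle of $\mathcal M$; since each of the $3|\mathcal M|$ edges appearing in $\mathcal M$ lies in at most $s-1$ triangles, $\N(G,K_3)\le 3(s-1)|\mathcal M|$, hence $|\mathcal M|\ge \varepsilon n^2/\bigl(3(s-1)\bigr)$. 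As the triangles in $\mathcal M$ are edge-disjoint, destroying all triangles of $G$ requires deleting at least $|\mathcal M|$ edges. Applying the triangle removal lemma (as in \cite{RSz}, \cite{AFKS}, \cite{Fo}) with parameter $\varepsilon/(3(s-1))$ then produces a constant $\delta=\delta(\varepsilon,s)>0$ with $\N(G,K_3)\ge \delta n^3$, which for $n$ large contradicts $\N(G,K_3)\le (s-1)n^2/6$. Hence $ex(n,K_3,H(s))<\varepsilon n^2$ for all large $n$, and since $\varepsilon$ was arbitrary, $ex(n,K_3,H(s))=o(n^2)$.

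For the lower bound I would use a set $A\subseteq\mathbb Z_N$ of size $N^{1-o(1)}$ with no nontrivial three-term arithmetic progression (even modulo $N$), which exists by Behrend's construction, and build the tripartite graph on classes $X,Y,Z$, each a copy of $\mathbb Z_N$, joining $x\in X$ to $y\in Y$ iff $y-x\in A$, joining $y\in Y$ to $z\in Z$ iff $z-y\in A$, and joining $x\in X$ to $z\in Z$ iff $z-x\in 2A:=\{2a:a\in A\}$. A triangle $xyz$ then corresponds to a solution of $a_1+a_2=2a_3$ with $a_1,a_2,a_3\in A$, which by the choice of $A$ forces $a_1=a_2=a_3$; thus the triangles of this graph are exactly the sets $\{x,x+a,x+2a\}$ with $x\in\mathbb Z_N$ and $a\in A$, there are $N|A|=N^{2-o(1)}$ of them, and no two of them share an edge. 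In particular, on $n=3N$ vertices this graph contains $n^{2-o(1)}$ triangles and no two triangles sharing an edge, so it is $H(s)$-free for every $s\ge2$, giving $ex(n,K_3,H(s))\ge n^{2-o(1)}$.

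The only delicate point is the upper bound, and there the subtlety is purely in how the triangle removal lemma is invoked: not directly (the graph already has only $O(n^2)\ll n^3$ triangles, so the lemma says nothing about it as stated), but in the contrapositive direction, turning ``many edge deletions are needed to kill all triangles'' into ``$\Omega(n^3)$ triangles'', which then collides with the elementary quadratic upper bound. Everything else — the incidence count and the verification that the Behrend-based graph has the claimed parameters — is routine.
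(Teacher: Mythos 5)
Your proof is correct and takes essentially the same route as the paper: the lower bound is the Ruzsa--Szemer\'edi/Behrend construction (which you spell out rather than cite), and the upper bound applies the triangle removal lemma using the fact that in an $H(s)$-free graph many edge deletions are needed to destroy all triangles. The only cosmetic differences are that you obtain the ``many deletions needed'' step via a maximal edge-disjoint triangle family and close the contradiction against the quadratic incidence bound $\N(G,K_3)\le (s-1)n^2/6$, whereas the paper notes directly that each deleted edge kills at most $s-1$ triangles and closes by averaging to exhibit an $r$-book with $r>s$.
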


\begin{proof} The lower bound follows from the construction of Ruzsa and
Szemer\'edi \cite{RSz}, based on Behrend's construction \cite{Be} of
dense subsets of the first $n$ integers that contain no three term
arithmetic progressions. This construction gives graphs on $n$ vertices
with $$ m=\frac{n^2}{e^{O(\sqrt {\log n})}}=n^{2-o(1)} $$ edges in which
every edge is contained in a unique triangle. Therefore these graphs
contain no $2$-book, and hence no $s$-book, showing that $$
ex(n,K_3,H(s)) \geq m/3 \geq \frac{n^2}{e^{O(\sqrt {\log n})}}
=n^{2-o(1)}. $$

The upper bound follows from the triangle removal lemma proved in
\cite{RSz}. If $G$ is a graph on $n$ vertices containing $t$ triangles
and no copy of $H(s)$, then every edge is contained in at most $s-1$
triangles.  Therefore, one has to remove at least $t/(s-1)$ edges of $G$
in order to destroy all triangles. It follows that if $t \geq \epsilon
n^2$ then, by the triangle removal lemma, the number of triangles in $G$
is at least $\delta n^3$ for some $\delta=\delta(\epsilon,s)>0$ , and
thus, by averaging, $G$ contains an $r$-book for $r \geq 2 \delta n >s$,
contradiction. Thus $t=o(n^2)$, as needed. \end{proof}

\section{Complete graphs and complete bipartite graphs}
\label{Sec:CompVSBi}

In this section we consider the cases in which $T$ and $H$ are either
complete or complete bipartite graphs. Note that when both $T$ and $H$
are complete graphs the precise value of $ex(n,T,H)$ is known, as
mentioned in Section \ref{sec2}. The following argument suffices to
provide the precise value of $ex(n,K_{a,b},K_t)$. If $u$ and $v$ are two
non-adjacent vertices in a $K_t$-free graph $G$, then by making the set
of neighbors of $u$  identical to that of $v$ (or vice versa), the graph
stays $K_t$-free, and one can always choose one of these modifications
to get a graph containing at least as many copies of $K_{a,b}$ as $G$.
This is because every copy of $K_{a,b}$ in $G$ that contains both $u$
and $v$ remains a copy in the modified graph as well.  Repeating this
procedure until every two nonadjacent vertices have the same
neighborhoods we get a complete multipartite graph with $n$ vertices and
at most $t-1$ color classes, and one can now optimize the sizes of the
color classes to obtain the maximum possible number of copies of
$K_{a,b}$.   (Note that this optimum is not necessarily obtained for
equal or nearly equal color classes. Note also that the precise argument
here requires to ensure the process above converges. To do so we can
assign all potential copies of $K_{a,b}$ nearly equal algebraically
independent weights, and always select the modification that maximizes
the total weight of all copies obtained. It is not difficult to argue
that for large $n$, in the extremal graph for any two distinct vertices,
there are copies of $K_{a,b}$ containing exactly one of them,  and
therefore in the above process the total weight keeps increasing and it
must converge.) The same argument shows that for any complete
multipartite graph $T$ with less than $t$ color classes, the extremal
graph giving the value of $ex(n,T,K_t)$ is itself a complete
multipartite graph.

As to the case when $H=K_{s,t}$ for $s\leq t$ and $T=K_m$, note that if
$H=K_{1,t}$, then it is a star and avoiding it in a graph means bounding
the degrees of the vertices. Thus to find $ex(n,K_m,K_{1,t})$ for $m\leq
t$ first note that as each vertex has degree at most $t-1$ the number of
$K_m$s is at most $\frac{n}{m}\binom{t-1}{m-1}$. On the other hand, if
$n$ is divisible by $t$ then taking $\frac{n}{t}$ disjoint copies of
$K_t$ will yield $\frac{n}{t}\binom{t}{m}=\frac{n}{m}\binom{t-1}{m-1}$
copies of $K_m$. If $n$ is not divisible by $t$ a similar bound can be
achieved by taking $\lfloor \frac{n}{t} \rfloor$ copies of $K_t$ and a
clique on the remaining vertices. In  \cite{GLS} it is conjectured that
the above construction is optimal, and this is proved for some specific
cases.

For general $m,s,t$ we  start by proving Theorem \ref{thm:K_mK_s,t}.
After that we prove another bound for cases that do not satisfy the
assumptions of the theorem and then  establish tighter results for
several values of $s,t$ when $T=K_3$.

To prove Theorem \ref{thm:K_mK_s,t} in a more precise form we prove two
lemmas, one for the upper bound and one for the lower. \begin{lemma}
\label{lem:UpperBoundK_m,K_s,t} For any fixed $m\geq 2$ and $t\geq s
\geq m-1$ \[ ex(n,K_m,K_{s,t})\leq
(\frac{1}{m!}+o(1))(t-1)^\frac{m(m-1)}{2s}n^{m-\frac{m(m-1)}{2s}} \]
\end{lemma}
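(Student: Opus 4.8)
The plan is to prove the bound by induction on $m$, using the standard identity relating copies of $K_m$ to copies of $K_{m-1}$ inside neighbourhoods. The base case is $m=2$: if $G$ is a $K_{s,t}$-free graph on $n$ vertices then every $s$-subset $S\subseteq V(G)$ has at most $t-1$ common neighbours, so double-counting the pairs $(S,v)$ with $|S|=s$ and $S\subseteq N(v)$ gives $\sum_{v}\binom{d_v}{s}\le (t-1)\binom{n}{s}$, and convexity yields the K\H{o}v\'ari--S\'os--Tur\'an estimate $|E(G)|\le\left(\tfrac12+o(1)\right)(t-1)^{1/s}n^{2-1/s}$, which is exactly the asserted inequality when $m=2$ (and it holds trivially when $s=1$, which will be needed at the bottom of the recursion).

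For the inductive step fix $m\ge 3$ and let $G$ be a $K_{s,t}$-free graph on $n$ vertices with $\N(G,K_m)=ex(n,K_m,K_{s,t})$. Since the copies of $K_m$ through a vertex $v$ are exactly the copies of $K_{m-1}$ in $G[N(v)]$, we have $m\,\N(G,K_m)=\sum_v\N(G[N(v)],K_{m-1})$. The crucial point --- and the only step that is not routine --- is that $G[N(v)]$ is not merely $K_{s,t}$-free but in fact $K_{s-1,t}$-free: an $(s-1)$-subset $B\subseteq N(v)$ with $t$ common neighbours inside $N(v)$ would, together with $v$, be an $s$-set in $G$ with $t$ common neighbours. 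I would then apply the induction hypothesis with parameters $(m-1,s-1,t)$ --- legitimate since $t\ge s-1\ge (m-1)-1$, the recursion descending to $(2,s-m+2)$ with $s-m+2\ge1$ --- to obtain $\N(G[N(v)],K_{m-1})\le\left(\tfrac{1}{(m-1)!}+o(1)\right)(t-1)^{\binom{m-1}{2}/(s-1)}d_v^{\alpha}$ with $\alpha=(m-1)-\binom{m-1}{2}/(s-1)$. Vertices with $d_v\le n^{\epsilon}$ should be discarded first, since they contribute only $O(n^{1+\epsilon(m-1)})=o\left(n^{m-\binom m2/s}\right)$ copies of $K_m$ in total, so that the $o(1)$ in the hypothesis is genuinely small on the remaining vertices.

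Finally I would bound $\sum_v d_v^{\alpha}$: from $\sum_v\binom{d_v}{s}\le(t-1)\binom ns$ one gets $\sum_v d_v^s\le(1+o(1))(t-1)n^s$, and since $\alpha<m-1\le s$ the power-mean inequality gives $\sum_v d_v^{\alpha}\le n^{1-\alpha/s}\bigl(\sum_v d_v^s\bigr)^{\alpha/s}\le(1+o(1))(t-1)^{\alpha/s}n^{1+\alpha(s-1)/s}$. Plugging this back in and simplifying, the exponent of $n$ collapses to $1+\frac{(m-1)(2s-m)}{2s}=m-\binom m2/s$, the exponent of $t-1$ collapses to $\frac{\binom{m-1}{2}}{s-1}+\frac{\alpha}{s}=\binom m2/s$, and the constant collapses to $\frac1m\cdot\frac1{(m-1)!}=\frac1{m!}$, which is precisely the claimed bound. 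The main obstacle is exactly the passage from ``$G[N(v)]$ is $K_{s,t}$-free'' to ``$G[N(v)]$ is $K_{s-1,t}$-free'': with only the former, the identical computation leaves $n$ raised to a power exceeding $m-\binom m2/s$ by $\Theta(1/s^2)$ and the induction fails to close. Everything else --- the K\H{o}v\'ari--S\'os--Tur\'an base case, the passage from $\binom{d_v}{s}$ to $d_v^s$, and tracking the $o(1)$ terms through the finitely many levels of the recursion --- is elementary.
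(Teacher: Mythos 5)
Your proof is correct and is essentially the paper's own argument: induction on $m$ with the key observation that $G[N(v)]$ is $K_{s-1,t}$-free, the K\H{o}v\'ari--S\'os--Tur\'an bound as the base case, and the power-mean inequality together with $\sum_v d_v^s\le(1+o(1))(t-1)n^s$ (counting $s$-stars) to close the recursion. The only difference is cosmetic: you discard low-degree vertices to control the $o(1)$ terms, a point the paper glosses over.
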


\begin{proof} We  apply induction on $m$.

For $m=2$, by the theorem of K\"ovari, S\'os and Tur\'an in \cite{KST}:
$$ ex(n,K_2,K_{s,t})=ex(n,K_{s,t})\leq
(\frac{1}{2}+o(1))(t-1)^{\frac{1}{s}}n^{2-\frac{1}{s}}. $$ This serves
as our base case.

Now assume we have proved this for $m$ and let us prove it for $m+1$. In
what follows it will be convenient to use the means-inequality: For each
$r<s$ and positive reals $x_1, \ldots ,x_n$: $$ \sum_{i=1}^n x_i^r \leq
n^{1-r/s} (\sum_{i=1}^n x_i^s)^{r/s}. $$

Let $G=(V,E)$ be a $K_{s,t}$ free graph on $n$ vertices, and let us
bound the number of copies of $K_{m+1}$ in it. For each $v\in V$ we know
that its neighborhood $N(v)$ does not contain any copy of $K_{s-1,t}$.
By the induction assumption we can bound the number of copies of $K_{m}$
in $N(v)$: \[ \mathcal{N}(N(v),K_m)\leq ex(d_v,K_m,K_{s-1,t})\leq
(\frac{1}{m!}+o(1))(t-1)^\frac{m(m-1)}{2(s-1)}
d_{v}^{m-\frac{m(m-1)}{2(s-1)}} \]

By bounding the number of $K_m$ in each $N(v)$ we can bound the number
of $K_{m+1}$ in $G$ resulting in:

\begin{align} \mathcal{N}(G,K_{m+1}) \leq & \frac{1}{m+1}
(\frac{1}{m!}+o(1))(t-1)^{\frac{m(m-1)}{2(s-1)}} \sum_v
d_{v}^{m-\frac{m(m-1)}{2(s-1)}} \nonumber \\ \leq &
(\frac{1}{(m+1)!}+o(1)) (t-1)^{\frac{m(m-1)}{2(s-1)}} (\sum_v
d_{v}^{s})^{\frac{m(2s-m-1)}{2s(s-1)}}n^{1-\frac{m(2s-m-1)}{2s(s-1)}} \\
\leq & (\frac{1}{(m+1)!}+o(1)) (t-1)^{\frac{(m+1)m}{2s}}
n^{\frac{m(2s-m-1)}{2(s-1)} + 1-\frac{m(2s-m-1)}{2s(s-1)}} \\ 
= & (\frac{1}{(m+1)!}+o(1)) (t-1)^{\frac{(m+1)m}{2s}}
n^{(m+1)-\frac{(m+1)m}{2s}} \nonumber \end{align}

Here we used the means inequality to get the first inequality (an easy
calculation shows that $m-\frac{m(m-1)}{2(s-1)} <s$). To bound the sum
$\sum_v d_v^s$ we used the fact that the number of $s$-edged stars in
$G$ cannot exceed $\binom{n}{s}(t-1)$ because otherwise $t$ of them will
share the same $s$ leaves, creating a $K_{s,t}$. \end{proof}

\begin{lemma} \label{lem:LowerBoundK_m,K_s,t} For any fixed $m$, $s \geq
2m-2$ and $t\geq (s-1)!+1$ \[ ex(n,K_m,K_{s,t})\geq (\frac{1}{m!}+o(1))
n^{m-\frac{m(m-1)}{2s}} \] \end{lemma}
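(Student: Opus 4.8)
The plan is to produce a $K_{s,t}$-free graph on $n$ vertices with $(\frac{1}{m!}+o(1))n^{m-m(m-1)/(2s)}$ copies of $K_m$ via a random construction, then delete the small number of bad structures. First I would take a random graph $G=G(n,p)$ with $p=n^{-(m-1)/s}$; a direct second-moment or concentration computation shows that with high probability the number of copies of $K_m$ is $(1+o(1))\binom{n}{m}p^{\binom{m}{2}} = (\frac{1}{m!}+o(1))n^{m}\cdot n^{-\binom{m}{2}(m-1)/(s(m-1))}$, i.e. exactly $(\frac{1}{m!}+o(1))n^{m-m(m-1)/(2s)}$, since $\binom{m}{2}\cdot\frac{m-1}{s}\cdot\frac{1}{m-1}$ — let me recompute: $p^{\binom{m}{2}} = n^{-\binom{m}{2}(m-1)/s}$, which is too small. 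The right choice is $p = n^{-2/(?)}$ tuned so that $p^{\binom{m}{2}}n^m = n^{m - m(m-1)/(2s)}$, forcing $p = n^{-1/s}$. With $p=n^{-1/s}$ the expected number of $K_m$'s is $(\frac{1}{m!}+o(1))n^{m-\binom{m}{2}/s}$ as required, and this is concentrated (the exponent is positive since $s\geq 2m-2 > \binom{m}{2}/m$ for the relevant range, so the count grows polynomially and standard concentration — e.g. the Kim–Vu polynomial concentration or a simple Chebyshev/Janson argument — applies).

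Next I would control the copies of $K_{s,t}$. The expected number of labelled copies of $K_{s,t}$ in $G(n,p)$ is at most $n^{s+t}p^{st} = n^{s+t}n^{-st/s} = n^{s+t-t} = n^s$. Comparing $n^s$ to the number of $K_m$'s, $n^{m-\binom{m}{2}/s}$: we need $s$ to be small compared to $m - \binom{m}{2}/s$, which fails in general, so deleting one vertex per $K_{s,t}$ destroys too many $K_m$'s. The standard fix, and the reason the hypotheses $s\geq 2m-2$ and $t\geq(s-1)!+1$ appear, is to instead delete \emph{edges}: it suffices to delete one edge from each copy of $K_{s,t}$, and an edge lies in at most $O(n^{m-2})$ copies of $K_m$, so we lose at most $n^s\cdot n^{m-2}\cdot p^{\binom{m}{2}-1}$ copies in expectation; one checks this is $o(n^{m-\binom{m}{2}/s})$ precisely under the stated bounds on $s$ (which make $s-2 + (\text{correction}) < -\binom{m}{2}/s$ after accounting for the edge-deletion). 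The condition $t\geq(s-1)!+1$ is the Kővári–Sós–Turán / norm-graph threshold ensuring the lower-bound construction is not vacuous — it guarantees that a random graph of this density genuinely has the claimed number of $K_m$'s surviving after $K_{s,t}$-deletion rather than the count collapsing.

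The main obstacle I expect is making the edge-deletion accounting tight: one must verify that, after deleting one edge from every copy of $K_{s,t}$, a positive proportion (indeed $(1-o(1))$) of the $\binom{n}{m}p^{\binom{m}{2}}$ original copies of $K_m$ remain. This requires knowing not just the \emph{expected} number of deleted $K_m$'s but that it is genuinely lower order, which forces a careful case analysis on how a $K_{s,t}$ and a $K_m$ can overlap (sharing $0,1,2,\dots$ vertices), with the worst overlap case dictating exactly the inequality $s\geq 2m-2$. An alternative, cleaner route would be to use an explicit pseudorandom construction — e.g. a blow-up of a norm graph or a random algebraic construction — where the $K_{s,t}$-freeness is built in exactly (this is where $t\geq(s-1)!+1$ enters, matching the known constructions) and one only has to count $K_m$'s; I would pursue whichever of the two (alteration vs.\ explicit construction) gives the constant $1/m!$ with least friction, and I suspect the explicit construction via a suitable blow-up is the intended one, with the random-deletion argument as the conceptual backup.
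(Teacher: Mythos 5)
Your primary route (random graph plus deletion) does not work, and the arithmetic you flagged but did not finish is exactly where it breaks. At density $p=n^{-1/s}$ the expected number of copies of $K_{s,t}$ is $\Theta(n^{s+t}p^{st})=\Theta(n^{s})$, and deleting one edge from each destroys on the order of $n^{s}\cdot n^{m-2}p^{\binom{m}{2}-1}=n^{m-\binom{m}{2}/s+s-2+1/s}$ copies of $K_m$, which exceeds the total count $n^{m-\binom{m}{2}/s}$ by a factor of $n^{s-2+1/s}$ for every $s\geq 2$. No tuning of $p$ rescues this: lowering $p$ to make the $K_{s,t}$-count manageable drops the $K_m$-count below the target exponent (this is the familiar reason the alteration method only gives the weaker exponent $2-(s+t-2)/(st-1)$ even for $m=2$). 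The hypotheses $s\geq 2m-2$ and $t\geq(s-1)!+1$ play no role in any deletion accounting, so the ``careful overlap analysis'' you anticipate cannot be made to close the gap.

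Your fallback -- an explicit $K_{s,t}$-free pseudorandom construction -- is indeed the paper's route, but the proposal omits the one step that constitutes the actual proof: counting copies of $K_m$ in that construction. The paper takes the projective norm graph $H(q,s)$, which is $K_{s,(s-1)!+1}$-free (this is where $t\geq(s-1)!+1$ enters, as you correctly guessed), has $n=(1+o(1))q^s$ vertices, is $d=(1+o(1))q^{s-1}$-regular, and has second eigenvalue $\lambda=q^{(s-1)/2}$. The subgraph-counting lemma for $(n,d,\lambda)$-graphs then gives $(1+o(1))\frac{n^m}{m!}(d/n)^{\binom{m}{2}}=(\frac{1}{m!}+o(1))n^{m-\binom{m}{2}/s}$ copies of $K_m$, \emph{provided} $n\gg\lambda(n/d)^{\Delta}$ with $\Delta=m-1$; that condition works out to $m<(s+3)/2$, i.e.\ precisely $s\geq 2m-2$. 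So the hypothesis you attributed to the deletion step is in fact the spectral condition guaranteeing that the norm graph contains the ``random-like'' number of cliques. Without either this spectral counting argument or some substitute for it, the lower bound is not established.
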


\begin{proof}

We use the projective norm-graphs constructed in \cite{ARS}, where it is
shown that $H(q,s)$ has $n=(1+o(1))q^s$ vertices, is $d=(1+o(1))
q^{s-1}$-regular, and is $K_{s,(s-1)!+1}$ free. An $(n,d, \lambda)$ graph
is a $d$-regular graph on $n$ vertices in which all eigenvalues but the
first have absolute value at most $\lambda$. As shown in \cite{AP} (see
also \cite{KS}, Theorem 4.10) the following holds: Let $G_1$ be a fixed
graph with $r$ edges, $s$ vertices and maximum degree $\Delta$. Let
$G_2$ be an $(n,d,\lambda)$ graph. If $n\gg
\lambda(\frac{n}{d})^{\Delta}$ then the number of copies of $G_1$ in
$G_2$ is $(1+o(1))\frac{n^s}{|Aut(G_1)|} (\frac{d}{n})^r$.

In our case we take $G_1=K_m$ and $G_2=H(q,s)$. By the results in
\cite{Sz} or \cite{AR} we know that the second eigenvalue, in absolute
value, of $H(q,s)$ is $q^{\frac{s-1}{2}}$, thus to get the inequality
$n\gg \lambda(\frac{n}{d})^{\Delta}$ it suffices that $m<\frac{s+3}{2}$.
Plugging the choice of $G_1,G_2$ into the result mentioned above
implies:

\begin{align*} \mathcal{N}(H(q,s),K_m)= & (1+o(1)) \frac{n^m}{m!}
(\frac{1}{q})^{\binom{m}{2}} \\ = & (\frac{1}{m!}+o(1)) (q^s-q^{s-1})^m
(\frac{1}{q})^{\binom{m}{2}} \\ =& (\frac{1}{m!}+o(1))
q^{s(m-\frac{m(m-1)}{2s})} \\ =& (\frac{1}{m!}+o(1))
n^{m-\frac{m(m-1)}{2s}} \end{align*}

\end{proof}

Note that for $m=3$ the lower bound above applies only for $s \geq 4$.
The following result provides a similar bound for $s \in {2,3}$ as well.

\begin{lemma} \label{l91} For any fixed $s \geq 2$ and $t\geq (s-1)!+1$
we have $ex(n,K_3,K_{s,t}) = \Theta(n^{3-\frac{3}{s}})$ \end{lemma}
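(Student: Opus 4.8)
\textbf{Proof plan for Lemma \ref{l91}.}

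The plan is to establish matching upper and lower bounds of order $n^{3-3/s}$ for $ex(n,K_3,K_{s,t})$ when $s\in\{2,3\}$, the remaining cases $s\ge 4$ being already covered by Lemmas \ref{lem:UpperBoundK_m,K_s,t} and \ref{lem:LowerBoundK_m,K_s,t}. The upper bound $ex(n,K_3,K_{s,t})\le (\frac16+o(1))(t-1)^{3/s}n^{3-3/s}$ is immediate from Lemma \ref{lem:UpperBoundK_m,K_s,t} with $m=3$, whose hypothesis $t\ge s\ge m-1=2$ is satisfied. So the real content is the lower bound: we must exhibit, for every fixed $s\in\{2,3\}$ and $t\ge (s-1)!+1$, a $K_{s,t}$-free graph on $n$ vertices with $\Omega(n^{3-3/s})$ triangles.

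For $s=2$ (so $t\ge 2$) the target is $\Omega(n^{3/2})$ triangles in a $K_{2,t}$-free graph. Here the natural object is the Erd\H{o}s--R\'enyi polarity graph $ER_q$ (mentioned already in the paper, \cite{ER}): it has $n=q^2+q+1$ vertices, is essentially $q$-regular, is $C_4$-free (hence $K_{2,2}$-free, hence $K_{2,t}$-free for every $t\ge 2$), and it is well known to contain $\Theta(q^3)=\Theta(n^{3/2})$ triangles. Alternatively one can invoke Theorem \ref{Thm:K3vsK_2,t} (the $t=2$ case) referenced earlier in the excerpt, which gives exactly a $(\frac{1}{6}+o(1))n^{3/2}$ construction; either route suffices and requires no computation. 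For $s=3$ (so $t\ge 3$) the target is $\Omega(n^{2})$ triangles in a $K_{3,t}$-free graph. The idea here is to take $\lfloor n/3\rfloor$ vertex-disjoint triangles: this graph trivially contains no $K_{3,t}$ (it has no $K_{3,1}$ as a subgraph with the center outside? — more simply, any $K_{3,3}$ needs six vertices pairwise connected across a bipartition, impossible in a disjoint union of triangles), yet it contains only $\Theta(n)$ triangles, which is far too few. So instead I would use a blow-up-type construction: take the complete tripartite graph or, better, a dense $K_{3,3}$-free graph obtained from a suitable bipartite-incidence design. In fact the cleanest choice is a random construction: pick each triangle on $[n]$ independently with an appropriate probability $p=\Theta(n^{-1})$, then delete a triangle from each copy of $K_{3,t}$ and from each pair of triangles sharing an edge; an expectation computation in the spirit of Lemma \ref{l22} shows $\Omega(n^2)$ triangles survive. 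Even simpler, one can take the edge-disjoint union of $\Theta(n)$ cliques of size $\Theta(\sqrt n)$ arranged so that no two share more than... — but that reintroduces $K_{3,t}$'s, so the random deletion argument is the safe path.

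The step I expect to be the main obstacle is the $s=3$ lower bound: unlike $s=2$, there is no off-the-shelf extremal graph of the right density that is simultaneously $K_{3,t}$-free and triangle-rich, so one must genuinely build one. The random construction is the robust option, but it requires care that the deletions (one triangle per bad configuration) remove only a lower-order fraction of all chosen triangles; concretely, with $p\asymp n^{-1}$ the expected number of chosen triangles is $\asymp n^2$, the expected number of edge-sharing pairs is $\asymp n^4 p^2\asymp n^2$ (borderline — so one must tune the constant in $p$ small enough that this is a small fraction of $n^2$, exactly as in Lemma \ref{l22}), and the expected number of copies of $K_{3,t}$ grows like $n^{3+3t}p^{3t}\asymp n^{3}$ which is negligible. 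Balancing the first two quantities by choosing the constant appropriately is the only delicate point; once that is done, linearity of expectation yields a surviving graph with $\Omega(n^2)$ triangles and no $K_{3,t}$, completing the proof.
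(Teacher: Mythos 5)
Your upper bound and your treatment of $s=2$ are fine: Lemma \ref{lem:UpperBoundK_m,K_s,t} with $m=3$ gives the upper bound for all $s\ge 2$, and the Erd\H{o}s--R\'enyi polarity graph (or F\"uredi's construction, as in Theorem \ref{Thm:K3vsK_2,t}) gives a $K_{2,t}$-free graph with $(\frac16+o(1))n^{3/2}$ triangles. For the record, the paper handles all $s\ge 2$ at once: it takes the projective norm graph $H(q,s)$ of \cite{ARS}, which is $K_{s,(s-1)!+1}$-free, and counts its triangles through its spectrum (the number of closed walks of length $3$ is the sum of the cubes of the eigenvalues, namely $(q^{s-1}-1)^3$, and the walks passing through loops contribute only a lower-order term). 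This spectral count is exactly the ingredient you are missing for $s=3$, where the general pseudo-random counting lemma behind Lemma \ref{lem:LowerBoundK_m,K_s,t} fails because the condition $m<(s+3)/2$ is not met.

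Your $s=3$ lower bound --- the step you yourself flag as the main obstacle --- is genuinely broken. With $p=\Theta(n^{-1})$, a fixed pair $u,v$ lies in $n-2$ potential triangles, so it becomes an edge of the union graph with probability $1-(1-p)^{n-2}=1-e^{-c}+o(1)$, a positive constant: the random graph you build is dense, with $\Theta(n^2)$ edges, whereas every $K_{3,t}$-free graph has $O(n^{5/3})$ edges by K\"ovari--S\'os--Tur\'an, so no bounded amount of pruning can make it $K_{3,t}$-free. Concretely, the expected number of copies of $K_{3,t}$ is not $n^{3+3t}p^{3t}$ but of order $n^{(3+t)+3t}p^{3t}=\Theta(n^{3+t})$ (you must pay for the $3+t$ vertices of the $K_{3,t}$ \emph{and} for the $3t$ apexes of the triangles supplying its edges), which dwarfs the $\Theta(n^2)$ chosen triangles; even your own figure of $n^3$ already exceeds $n^2$, so deleting one triangle per copy removes everything. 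No tuning of the constant in $p$ repairs this. The construction that works is $H(q,3)$: it is $K_{3,3}$-free, has $n=(1+o(1))q^3$ vertices, is $(1+o(1))q^2$-regular, and its known eigenvalues give $(q^2-1)^3$ closed $3$-walks, hence $(\frac16+o(1))q^6=\Theta(n^{2})$ triangles once the negligible loop contributions are discarded.
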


\begin{proof} In view of the previous upper bound it suffices to show
the existence of a graph $G$ with $n$ vertices containing no copy of
$K_{s,t}$ and containing at least $\Omega(n^{3-\frac{3}{s}})$ triangles.
For this we apply again the projective norm-graphs $H(q,s)$ constructed
in \cite{ARS}, which are $K_{s,t}$ free.

The graph $H=H(q,s)$ is defined in the following way:
$V(H)=GF(q^{s-1})\times GF(q)^*$ where $GF(q)^*$ is the multiplicative
group of the $q$ element field. For $A\in GF(q^{s-1})$ define the norm
$$N(A)=A \cdot A^q \dots A^{q^{s-2}}. $$ Two vertices $(A,a)$ and
$(B,b)$ are connected in $H$ if $N(A+B)=ab$. Note that
$|V(H)|=q^s-q^{s-1}$ and $H$ is $q^{s-1}-1$ regular.

We need to show that $H(q,s)$ has the right number of triangles. As
mentioned above, the eigenvalues and multiplicities of $H(q,s)$ are
given in \cite{Sz}, \cite{AR}. These are as follows: \noindent
$q^{s-1}-1$ is of multiplicity $1$, ~~$0$ is of multiplicity $q-2$,
~~$1$ and $-1$ are of multiplicity $(q^{s-1}-1)/2$ each, and
$q^{(s-1)/2}$, $-q^{(s-1)/2}$ are of multiplicity $(q^{s-1}-1)(q-2)/2$
each. Summing the cubes of the eigenvalues we conclude that the number
of closed walks of length $3$ in $H(q,s)$ is
$(q^{s-1}-1)^3=(1+o(1))q^{3s-3}$.

A closed walk of length $3$ is not a triangle iff it contains a loop.
Fixing $A\in GF(q^t)$ the vertex $(A,x)$ has a loop iff $N(A+A)=x^2$.
There are at most $2$ solution $x$ for each given $A$. Thus there are no
more than $2q^{s-1}$ loops. A closed walk of length $3$ containing a
loop must also contain an additional edge taken twice (this additional
edge may also be the loop itself). As the graph is $q^{s-1}-1$ regular
we get at most $6q^{s-1} q^{s-1}=o(q^{3s-3})$ such walks containing a
loop. As the number of closed walks of length $3$ is $(1+o(1)) q^{3s-3}$
this is negligible and the number of triangles is
$(\frac{1}{6}+o(1))q^{3s-3}= \Theta(|V(H)|^{3-3/s})$, as needed.
\end{proof}

\begin{remark} For the special case of $s=t=3$ it can be shown that the
construction of Brown \cite{Br} gives another example of a
$K_{3,3}$-free on $n$ vertices with essentially the same number of
triangles. \end{remark}

\begin{remark}  The number of triangles in the projective norm graphs is
also computed in a recent paper of Kostochka, Mubayi and Verstra\"ete
\cite{KMV}, motivated by an extremal problem for $3$-uniform
hypergraphs. They estimate this number directly, without using the
eigenvalues. \end{remark}

For values of $s,t$ and $m$ that do not satisfy the restrictions in the
previous results we provide slightly weaker results in the following
lemmas:

\begin{lemma} \label{l53} For any fixed $m$ and $t\geq s\geq 1$ such
that $t+s>m$ \[ ex(n,K_m,K_{s,t}) \leq
(1+o(1))\frac{(m-s)!(t-1)^\frac{s-1}{2}}{m!}\binom{t-1}{m-s}
n^\frac{s+1}{2} \] \end{lemma}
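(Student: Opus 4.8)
\textbf{Proof plan for Lemma \ref{l53}.}
The plan is to bound the number of copies of $K_m$ in a $K_{s,t}$-free graph $G=(V,E)$ on $n$ vertices by first choosing an ordered set of $s$ vertices that will play the role of one side of a $K_{s,m-s}$ inside each $K_m$, and then counting the ways to extend. Concretely, every copy of $K_m$ in $G$ can be described by picking an $s$-subset $S$ of its vertices together with the remaining $(m-s)$-subset $R$; the set $S$ has a common neighborhood that contains $R$ and also contains $S$ itself minus... more carefully: fix any $s$ vertices $S\subseteq V$ that are pairwise adjacent; since $G$ is $K_{s,t}$-free, the common neighborhood $\bigcap_{v\in S} N(v)$ has size at most $t-1$, for otherwise $S$ together with $t$ of its common neighbors yields a $K_{s,t}$. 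Any copy of $K_m$ containing $S$ must use $m-s$ further vertices all lying in this common neighborhood and forming a clique there. So the number of copies of $K_m$ containing a fixed clique $S$ of size $s$ is at most $\binom{t-1}{m-s}$.

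The next step is to control the number of cliques $S$ of size $s$ in $G$, i.e.\ $\N(G,K_s)$. Since $s<t+s$ and in particular $G$ is $K_{s,t}$-free, hence $K_{s,s}$-free when $t\le s$ is not the relevant regime — instead I invoke the already-proved upper bound: by Lemma \ref{lem:UpperBoundK_m,K_s,t} (applicable since $t\ge s\ge s-1$), $\N(G,K_s)\le(\tfrac1{s!}+o(1))(t-1)^{\frac{s(s-1)}{2s}}n^{s-\frac{s(s-1)}{2s}} = (\tfrac1{s!}+o(1))(t-1)^{\frac{s-1}{2}}n^{\frac{s+1}{2}}$. Multiplying, the number of copies of $K_m$ is at most $\binom{t-1}{m-s}\cdot\N(G,K_s)$, but this overcounts each $K_m$ by the number of $s$-subsets of its vertex set, namely $\binom{m}{s}$. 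Dividing gives
\[
\N(G,K_m)\le \frac{1}{\binom{m}{s}}\binom{t-1}{m-s}\Big(\tfrac{1}{s!}+o(1)\Big)(t-1)^{\frac{s-1}{2}}n^{\frac{s+1}{2}}
= \Big(1+o(1)\Big)\frac{(m-s)!\,(t-1)^{\frac{s-1}{2}}}{m!}\binom{t-1}{m-s}n^{\frac{s+1}{2}},
\]
using $\frac{1}{\binom{m}{s}s!}=\frac{(m-s)!}{m!}$, which is exactly the claimed bound.

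The only point requiring care — and the place I expect a referee to look hardest — is the legitimacy of applying Lemma \ref{lem:UpperBoundK_m,K_s,t} to bound $\N(G,K_s)$: one must check its hypotheses ($t\ge s\ge s-1$, which holds trivially) and that the $o(1)$ error terms from that lemma and from the common-neighborhood bound combine cleanly into a single $(1+o(1))$ factor, which they do since $\binom{t-1}{m-s}$ and $(t-1)^{(s-1)/2}$ are constants. One should also note the degenerate cases: if $m\le s$ the statement is either trivial or subsumed by earlier results, and the hypothesis $t+s>m$ is exactly what makes $\binom{t-1}{m-s}$ (with $m-s\le t-1$) meaningful and the exponent $\frac{s+1}{2}$ the honest one. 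No further optimization of constants is attempted.
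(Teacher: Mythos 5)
Your proof is correct and recovers the stated constant exactly, but it follows a genuinely different route from the paper. The paper proves Lemma \ref{l53} by a fresh induction on $s$: for a $K_{s,t}$-free $G$ each neighborhood $N(v)$ is $K_{s-1,t}$-free, the induction hypothesis bounds the number of copies of $K_{m-1}$ inside $N(v)$ by a constant times $d_v^{s/2}$, and summing over $v$, applying the means inequality and the star-count bound (at most $\binom{n}{s}(t-1)$ $s$-stars, else some $s$-set of leaves repeats $t$ times and yields a $K_{s,t}$) gives the claim, with the base case $s=1$ handled by the degree argument. You instead double count in one step: each copy of $K_m$ contains $\binom{m}{s}$ cliques of size $s$, each such $s$-set has at most $t-1$ common neighbors by $K_{s,t}$-freeness, hence extends to at most $\binom{t-1}{m-s}$ copies of $K_m$, and you then bound $\N(G,K_s)$ by the diagonal case $m=s$ of Lemma \ref{lem:UpperBoundK_m,K_s,t}; the identity $\frac{1}{\binom{m}{s}\,s!}=\frac{(m-s)!}{m!}$ gives precisely the claimed bound. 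Your argument is shorter and modular, at the price of leaning on the earlier lemma (itself an induction on $m$ resting on the same K\"ovari--S\'os--Tur\'an/star-counting machinery), whereas the paper's induction on $s$ is self-contained within this lemma. Two small points you should record explicitly: Lemma \ref{lem:UpperBoundK_m,K_s,t} is stated only for $m\ge 2$, so for $s=1$ you should bypass it and use $\N(G,K_1)=n$ directly (this is exactly the paper's base case, and the formula then reads $(1+o(1))\frac{1}{m}\binom{t-1}{m-1}n$); and the step of choosing an $s$-subset of a $K_m$ presupposes $m\ge s$, so the degenerate cases $m\le s$ must be set aside as you indicate (for $m=s$ the claim is literally the diagonal case of the earlier lemma). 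Neither point is a gap in the intended regime $s<m$.
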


\begin{proof}

We apply induction on $s$. As the base case take $s=1$. In this case the
fact that $G$ is $K_{1,t}$ free implies that the degrees of all vertices
are at most $t-1$. Thus each vertex can take part in no more than
$\binom{t-1}{m-1}$ copies of $K_m$ and hence \[ ex(n,K_m,K_{1,t})\leq
\frac{1}{m}\binom{t-1}{m-1}n \] Note that if $t\mid n$ then this bound
is achieved by the disjoint union of $\frac{n}{t}$ pairwise vertex
disjoint copies of $K_t$.

Assuming the result for $s-1$ we prove it for $s$. If $G$ is $K_{s,t}$
free, then for each $v\in V$ its neighborhood cannot contain a copy of
$K_{s-1,t}$. By the induction hypothesis this bounds the number of
copies of $K_{m-1}$ by $$ (1+o(1))
\frac{(m-s)!(t-1)^\frac{s-2}{2}}{(m-1)!} \binom{t-1}{m-s}
d_v^\frac{s}{2} $$ where $d_v$ is the degree of $v$. This is clearly
also the number of copies of $K_m$ containing $v$. Therefore,

\begin{align} \N(G,K_m)& \leq \frac{1}{m} (1+o(1))\sum_v
\frac{(m-s)!(t-1)^\frac{s-2}{2}}{(m-1)!}\binom{t-1}{m-s}
d_v^{\frac{s}{2}} \nonumber \\ &\leq
(1+o(1))\frac{(m-s)!(t-1)^\frac{s-2}{2}}{m!}\binom{t-1}{m-s}(\sum_v
d_v^{s})^{\frac{1}{2}}n^\frac{1}{2} \\ &\leq
(1+o(1))\frac{(m-s)!(t-1)^\frac{s-1}{2}}{m!} \binom{t-1}{m-s}
n^{\frac{s+1}{2}} \end{align}

\noindent where we get (8) from the means inequality and (9) from the
fact that we cannot have more than $\binom{n}{s}(t-1)$ copies of $s$
stars in $G$. \end{proof}

Note that unlike Theorem \ref{thm:K_mK_s,t} to get the bound in Lemma
\ref{l53} we need to assume nothing but the obvious fact that $K_m$ does
not contain a copy of $K_{s,t}$. On the other hand for every $m,s \in
\mathbb{N}$ one has $\frac{s+1}{2}\geq m-\frac{m(m-1)}{2s}$ where we
have an equality when $s=m-1$ and $s=m$. Thus when $s<m-1$ we must use
Lemma \ref{l53}, but if $s\geq m-1$ Lemma \ref{lem:UpperBoundK_m,K_s,t}
gives a better upper bound.

\begin{lemma} \label{l54} For any $m$ and $t>m-2>1$
$$ex(n,K_m,K_{2,t})\geq \frac{1}{4} m^{\frac{-4m}{3}}n^{\frac{4}{3}}$$
\end{lemma}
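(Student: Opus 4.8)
The plan is to produce, for all large $n$, a $K_{2,m-1}$-free graph $G$ on $n$ vertices with at least $\tfrac14 m^{-4m/3}n^{4/3}$ copies of $K_m$. Since the hypothesis $t>m-2$ means precisely $t\ge m-1$ (and $m\ge4$), and a $K_{2,m-1}$-free graph is automatically $K_{2,t}$-free for every $t\ge m-1$, this suffices. The graph $G$ will be the \emph{clique graph} of an $m$-uniform hypergraph $\mathcal H$ on the vertex set $[n]$: declare $xy\in E(G)$ whenever $\{x,y\}$ is contained in some edge of $\mathcal H$. Then every edge of $\mathcal H$ spans a copy of $K_m$ in $G$, and distinct edges of $\mathcal H$ have distinct vertex sets, so $\N(G,K_m)\ge|\mathcal H|$. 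Everything therefore reduces to building a $\mathcal H$ that is $m$-uniform, \emph{linear} (any two edges meet in at most one vertex), of \emph{girth at least $5$} (no Berge cycle of length $2$, $3$ or $4$, i.e.\ no $2$, $3$ or $4$ edges forming a loose cycle), and with at least $\tfrac14 m^{-4m/3}n^{4/3}$ edges.

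The key point is the structural claim: \emph{if $\mathcal H$ is linear of girth $\ge5$ then its clique graph $G$ is $K_{2,m-1}$-free.} Fix $u\ne v$ and let $W$ be their common neighbourhood in $G$; I must show $|W|\le m-2$. First suppose $u,v$ lie in a common edge $e\in\mathcal H$, necessarily unique by linearity. Any $w\in W$ with $w\notin e$ gives an edge $f\ni\{u,w\}$ and an edge $g\ni\{v,w\}$, and linearity forces $e,f,g$ to be three distinct edges meeting pairwise in $\{u\}$, $\{v\}$, $\{w\}$ — a Berge $3$-cycle, impossible. Hence $W=e\setminus\{u,v\}$ and $|W|=m-2$. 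Now suppose $u,v$ lie in no common edge. For $w\in W$ pick $f_w\ni\{u,w\}$ and $g_w\ni\{v,w\}$; these exist and $f_w\ne g_w$ (else $u,v\in f_w$). For distinct $w,w'\in W$ one checks, using linearity, that $f_w=f_{w'}$ or $g_w=g_{w'}$ produces a Berge $3$-cycle, that $f_w=g_{w'}$ is impossible, and that otherwise $f_w,g_w,g_{w'},f_{w'}$ are four distinct edges forming the Berge $4$-cycle $u\,f_w\,w\,g_w\,v\,g_{w'}\,w'\,f_{w'}\,u$; girth $\ge5$ rules all of these out, so $|W|\le1\le m-2$ (here $m\ge4$). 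This second case — forbidding two parallel Berge paths of length two between a pair not sharing an edge — is exactly the step that needs girth $5$ rather than $4$, and is the reason the exponent is $4/3$ and not $3/2$; it is the main obstacle of the argument.

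To construct $\mathcal H$ I use the deletion method. Let $\mathcal H_0$ be the random $m$-uniform hypergraph on $[n]$ including each of the $\binom nm$ possible edges independently with probability $p=c_m\,n^{-(m-4/3)}$, where $c_m=(12\,m!)^{-1/3}$; then $\E|\mathcal H_0|=\binom nm p=(1+o(1))\tfrac{c_m}{m!}\,n^{4/3}$. For $\ell\in\{2,3,4\}$ the number of choices of $\ell$ ordered distinct link-vertices together with a completion of each of the $\ell$ edges through them is at most $n^{\ell}(n^{m-2})^{\ell}=n^{(m-1)\ell}$, so the expected number of Berge $\ell$-cycles in $\mathcal H_0$ is at most $(n^{m-1}p)^{\ell}=(c_m n^{1/3})^{\ell}$, and the expected number of Berge cycles of length at most $4$ (these include all pairs of edges sharing two vertices) is at most $3(c_m n^{1/3})^{4}=3c_m^{4}n^{4/3}$, which is at most $\tfrac12\E|\mathcal H_0|$ for large $n$ by the choice of $c_m$. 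Fix an instance for which $|\mathcal H_0|$ minus the number of these short Berge cycles is at least $\tfrac12\E|\mathcal H_0|$, and delete one edge from each such cycle. The resulting $\mathcal H$ is linear of girth $\ge5$ with $|\mathcal H|\ge\tfrac12\E|\mathcal H_0|\ge\tfrac{c_m}{3m!}\,n^{4/3}$, and a direct computation (using Stirling's estimate for $m!$) gives $\tfrac{c_m}{3m!}=\tfrac1{3\cdot 12^{1/3}(m!)^{4/3}}\ge\tfrac14 m^{-4m/3}$ for all $m\ge4$. Its clique graph $G$ is then $K_{2,m-1}$-free with $\N(G,K_m)\ge|\mathcal H|\ge\tfrac14 m^{-4m/3}n^{4/3}$, as required; the counting in this last step is routine and only the structural claim of the previous paragraph needs care.
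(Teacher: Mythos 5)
Your proof is correct, and its skeleton is the same as the paper's: build an $m$-uniform hypergraph on $n$ vertices with girth at least $5$ and $\Omega(n^{4/3})$ edges, replace each hyperedge by a clique, and check that any short Berge cycle forced by a copy of $K_{2,t}$ violates the girth condition. Where you genuinely diverge is in how the hypergraph is obtained: the paper simply invokes the explicit result of Lazebnik and Verstra\"ete \cite{LV}, which hands over a girth-$5$ $m$-uniform hypergraph with at least $\frac14 m^{-4m/3}n^{4/3}$ edges, whereas you construct one from scratch by the deletion method (random hypergraph with edge probability $c_m n^{-(m-4/3)}$, then destroy the expected $O(n^{4/3})$ Berge cycles of length at most $4$). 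This makes the argument self-contained, and your choice $c_m=(12\,m!)^{-1/3}$ together with the estimate $m!/m^m\le 24/256$ for $m\ge 4$ does recover the same explicit constant $\frac14 m^{-4m/3}$, at the cost of requiring $n$ sufficiently large (which the paper assumes throughout Section 3 anyway). Your verification that the clique graph of a linear, girth-$\ge 5$ hypergraph is $K_{2,m-1}$-free is also a carefully fleshed-out version of the paper's one-line observation that two edges of a $K_{2,t}$ coming from different cliques sit on a $C_4$ meeting at least two hyperedges; your case split (the pair $u,v$ inside a common hyperedge, giving $|W|=m-2$ via the no-Berge-triangle condition, versus not, giving $|W|\le 1$ via the no-Berge-$C_4$ condition) is where the girth-$5$ rather than girth-$4$ hypothesis is actually used, and you identify that correctly.
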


\begin{proof}

In \cite{LV} Lazebnik and Verstra\"ete show that there exists an
$m-$uniform hypergraph $H$ on $n$ vertices, with at least $\frac{1}{4}
m^{\frac{-4m}{3}}n^{\frac{4}{3}}$ hyperedges and with girth at least
$5$. Let $G$ be the graph obtained from $H$ by replacing each hyperedge
of $H$ by a copy of $K_m$. We next observe that $G$ contains no copy of
$K_{2,t}$.

Assume towards a contradiction that $G$ contains a copy of $K_{2,t}$. As
$t>m-2$ the copy of $K_{2,t}$ cannot be contained in a single $K_m$ and
so there must be at least two edges in it that come from two different
$K_m$s. These two edges are a part of a $C_4$ hence in the
hypergraph $H$ this $C_4$ has vertices in at least two hyperedges. Thus
$H$ must contain a cycle of length between $2$ and $4$ in contradiction
to the assumption that $H$ has girth at least $5$.  Therefore $G$ is
$K_{2,t}$ free with at least $\frac{1}{4}
m^{\frac{-4m}{3}}n^{\frac{4}{3}}$ copies of $K_m$, as needed.
\end{proof}

Finally for $s=2$ we can determine the asymptotic behavior of
$ex(n,K_3,K_{2,t})$ up to a lower order term, as shown next.
\begin{theo} \label{Thm:K3vsK_2,t} For any fixed $t \geq 2$,
$ex(n,K_3,K_{2,t})= (1+o(1)) \frac{1}{6}(t-1)^{3/2} n^{3/2}$. \end{theo}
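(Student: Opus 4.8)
The statement splits into a short counting upper bound and a construction for the lower bound; I would prove the two separately.

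\textbf{The upper bound.} Let $G$ be a $K_{2,t}$-free graph on $n$ vertices. Every triangle through a vertex $v$ is given by an edge of the induced graph $G[N(v)]$, so $\mathcal N(G,K_3)=\frac13\sum_{v}e(G[N(v)])$. The point is that $G[N(v)]$ has maximum degree at most $t-1$: if some $u\in N(v)$ had $t$ neighbours $w_1,\dots,w_t$ inside $N(v)$, then $\{u,v\}$ together with $\{w_1,\dots,w_t\}$ would span a copy of $K_{2,t}$ in $G$. Hence $e(G[N(v)])\le\frac{t-1}{2}d_v$, and summing over $v$, $\mathcal N(G,K_3)\le\frac{t-1}{6}\sum_v d_v=\frac{t-1}{3}e(G)$. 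Plugging in $e(G)\le ex(n,K_{2,t})\le(\frac12+o(1))(t-1)^{1/2}n^{3/2}$ -- the case $m=2$ of Lemma~\ref{lem:UpperBoundK_m,K_s,t} (equivalently K\"ovari--S\'os--Tur\'an) -- yields $\mathcal N(G,K_3)\le(\frac16+o(1))(t-1)^{3/2}n^{3/2}$.

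\textbf{The lower bound.} One needs a $K_{2,t}$-free graph that is asymptotically extremal for the number of edges \emph{and} in which almost every edge lies in exactly $t-1$ triangles; this is provided by F\"uredi's extremal construction for $ex(n,K_{2,t})$, which I would present as follows. Let $q$ be a prime power with $q\equiv 1\pmod{t-1}$, let $H\le GF(q)^*$ be the (unique) subgroup of order $t-1$, fix a non-degenerate symmetric bilinear form $\langle\cdot,\cdot\rangle$ on $GF(q)^2$, and let $\Gamma$ be the graph whose vertices are the $H$-orbits $[x]$ of the nonzero vectors $x\in GF(q)^2$ under scalar multiplication, with $[x]\sim[y]$ iff $\langle x,y\rangle\in H$ (well defined, since $H\cdot H=H$ and the form is symmetric); finally delete the $O(q)$ orbits $[x]$ with $\langle x,x\rangle\in H$ so that there are no loops. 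Then $|V(\Gamma)|=(1+o(1))q^2/(t-1)$; for fixed nonzero $x$ the set $\{y:\langle x,y\rangle\in H\}$ splits into $q$ orbits, so $\Gamma$ is $q$-regular and $e(\Gamma)=(1+o(1))q^3/(2(t-1))$. For distinct $[x],[z]$ the linear map $y\mapsto(\langle x,y\rangle,\langle z,y\rangle)$ shows they have at most $|H|^2/|H|=t-1$ common neighbours, with equality whenever $x,z$ are linearly independent -- which holds for all but $O(n)$ of the edges. Hence $\Gamma$ is $K_{2,t}$-free, and all but an $o(1)$-fraction of its edges lie in exactly $t-1$ triangles; since every vertex lies in at most $\frac{t-1}{2}q$ triangles, the $O(q)$ deletions destroy only $O(q^2)=o(q^3)$ of them, so $\mathcal N(\Gamma,K_3)=(1+o(1))\frac{t-1}{3}e(\Gamma)=(\frac16+o(1))q^3=(\frac16+o(1))(t-1)^{3/2}|V(\Gamma)|^{3/2}$.

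\textbf{Finishing and the main difficulty.} Given $n$, I would take the largest prime power $q\equiv 1\pmod{t-1}$ with $q^2/(t-1)\le n$; by standard results on primes in arithmetic progressions $|V(\Gamma)|=(1-o(1))n$, and adding the remaining $o(n)$ vertices (pendant to a single vertex of $\Gamma$, say) affects neither the $K_{2,t}$-freeness nor the triangle count to leading order, giving $ex(n,K_3,K_{2,t})\ge(\frac16-o(1))(t-1)^{3/2}n^{3/2}$. The upper bound is essentially immediate; the real content is the lower-bound construction, and the step I expect to be most delicate is verifying that almost every edge of $\Gamma$ sits in exactly $t-1$ triangles (equivalently that almost every adjacent pair attains the maximum $t-1$ common neighbours) together with controlling the negligible effects of the loop-deletions and of the edges between linearly dependent pairs; the number-theoretic interpolation in $n$ is routine.
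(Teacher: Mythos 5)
Your proposal is correct and follows essentially the same route as the paper: the upper bound is the $m=3$, $s=2$ case of Lemma \ref{lem:UpperBoundK_m,K_s,t} (equivalently the local degree-at-most-$t-1$-in-neighborhoods argument combined with K\"ovari--S\'os--Tur\'an), and the lower bound is the same F\"uredi construction generalizing Erd\H{o}s--R\'enyi, with almost every edge lying in exactly $t-1$ triangles. The only differences are presentational -- you delete the $O(q)$ loop vertices and treat the linearly dependent pairs explicitly, whereas the paper keeps the loops and discounts the degenerate triangles, and you spell out the routine interpolation in $n$ -- and your accounting of these negligible corrections is accurate.
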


\begin{proof} The upper bound follows from the assertion of Lemma
\ref{lem:UpperBoundK_m,K_s,t} with $m=3$ and $s=2$. To prove the lower
bound we apply a construction of F\"uredi \cite{Fu}, extending the one
of Erd\H{o}s and R\'enyi \cite{ER}. The details follow. Let $\F$ be a
finite field of order $q$, where $t-1$ divides $q-1$, and let $h$ be a
nonzero element of $\F$ that generates a multiplicative subgroup
$A=\{h,h^2,...,h^{t-1}=1\}$ of order $t-1$ in $\F^*$. The vertices of
the graph $G=G(\F,t-1)$ are all nonzero pairs in $(\F \times \F)$, where
two pairs $(a,b)$ and $(a',b')$ are considered equivalent if for some $
h^{\alpha}\in A$, $h^{\alpha}a=a' \mbox{ and } h^{\alpha}b=b'$. Two
vertices $(a,b),(c,d)$ are connected if $ac+bd\in A$. The number of
vertices of $G$ is $n=(q^2-1)/(t-1)$ and it is not difficult to check
that it is regular of degree $q$, where here each loop adds one to the
degree. Indeed, for a fixed vertex $(b,c)$ and for each $h^{\alpha} \in
A$ there are exactly $q$ solutions $(x,y)$ to the equation
$bx+cy=h^{\alpha}$, and as any neighbor $(x,y)$ of $(b,c)$ is obtained
this way $t-1$ times, by our equivalence relation, the graph is
$q$-regular.  Note that there is a (unique) loop at a vertex $(x,y)$ iff
$x^2+y^2 \in A$. For each fixed $h^{\alpha} \in A$ and each fixed $x \in
\F$ there are at most $2$ solutions for $y$, showing that the number of
loops is at most $2q(t-1)/(t-1)=2q$ (it is in fact smaller, but this
estimate suffices  for us).

It thus follows that the number of edges of $G$ (without the loops) is
$m=(\frac{1}{2}+o(1))q^3/(t-1)= (\frac{1}{2}+o(1))\sqrt{t-1}~n^{3/2}$.

We claim that any two distinct vertices $(a,b)$ and $(c,d)$ of $G$ have
exactly $t-1$ common neighbors (if there is a loop in one of these
vertices and they are adjacent, this counts as a common neighbor).
Indeed, the vertex $(x,y)$ is a common neighbor if for some $0\leq
\alpha,\beta \leq t-2$ \begin{align*} ax+by&=h^{\alpha}\\
cx+dy&=h^{\beta}. \end{align*} These two  equations are linearly
independent, and hence there is a unique solution for each choice of
$\alpha,\beta$. As the number of choices for $\alpha$ and $\beta$ is
$(t-1)^2$, and every common neighbor is counted this way $t-1$ times,
the claim follows.

By the above claim, $G$ is $K_{2,t}$-free. In addition, since the
endpoints of each edge in it have $t-1$ common neighbors, each edge is
contained in $t-1$ triangles (including the degenerated ones containing
a loop). The number of triangles containing a loop is smaller than
$2q^2$ which is far smaller than the number of edges
$m=\Theta(q^3/(t-1))$. Therefore, the number of triangles is $$ (1+o(1))
\frac{1}{3} m (t-1) =(1+o(1))\frac{1}{6}(t-1)\sqrt{t-1}~n^{3/2} $$
completing the proof. \end{proof}

We conclude the section by considering the case $T=K_{a,b}$ and
$H=K_{s,t}$ where we establish the following. \begin{prop} \label{p55}
(i) If $a \leq s \leq t$ and $a \leq b<t$ then $$ ex (n,K_{a,b},
K_{s,t}) \leq (1+o(1)) \frac{1}{a! (b!)^{1-a/s} } {{t-1} \choose
b}^{a/s} n^{a+b-ab/s}, $$ and if $a=b$ the above bound can be divided by
$2$.

\noindent (ii) If $(a-1)!+1 \leq b < (s+1)/2$ then for all $t \geq s,~~$
$ex(n,K_{a,b},K_{s,t}) =\Theta(n^{a+b-ab/s}).$ \end{prop}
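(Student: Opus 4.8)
\textbf{Proof plan for Proposition \ref{p55}.}

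\emph{Part (i), the upper bound.} The natural approach mirrors the proof of Lemma \ref{lem:UpperBoundK_m,K_s,t}, but now counting copies of $K_{a,b}$ rather than $K_m$ by splitting off the $b$-side. The plan is to enumerate copies of $K_{a,b}$ by first choosing the $a$-set $A = \{v_1,\dots,v_a\}$ that plays the role of the smaller side, and then counting the number of ways to complete it with a $b$-set inside the common neighborhood $N(A) = \bigcap_{i} N(v_i)$. Since $G$ is $K_{s,t}$-free and $a \le s$, any $s-a$ further vertices cannot have $t$ common neighbors \emph{together with} $A$; more directly, $N(A)$ itself cannot contain $K_{s-a, t}$ (else together with $A$ we would get $K_{s,t}$), but the cleanest route is to bound things via a convexity/means argument on the ``codegrees'' of $a$-sets. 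Concretely: the number of copies of $K_{a,b}$ is at most $\frac{1}{b!}\sum_{A}\binom{|N(A)|}{b}$ where the sum is over $a$-subsets $A$ with the $a$-side distinguished, divided by $a!$ (and by an extra factor $2$ when $a=b$ since the two sides are interchangeable). Then one uses that $\sum_A |N(A)|$ counts $a$-stars and is $O(n \cdot d^a)$-type quantities, together with the Kővári–Sós–Turán bound that no $s$ vertices have $t$ common neighbors, to get $\sum_A |N(A)| \le (1+o(1))\binom{n}{s}(t-1)/\binom{\,\cdot\,}{\,\cdot\,}$ style estimate. Applying the power-mean inequality $\sum_A |N(A)|^b \le (\text{number of } A)^{1-b/s}\big(\sum_A |N(A)|^s\big)^{b/s}$ and the $K_{s,t}$-freeness bound $\sum_A |N(A)|^s \lesssim \binom{n}{a}\cdot$(bound on $s$-sets with many common neighbors) yields the exponent $a+b-ab/s$ with the stated constant. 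I expect the bookkeeping of constants to be the most delicate part here — getting exactly $\frac{1}{a!(b!)^{1-a/s}}\binom{t-1}{b}^{a/s}$ requires carefully tracking which stars are counted and the asymptotic form of the KST bound, but no new idea is needed beyond what already appears in Lemma \ref{lem:UpperBoundK_m,K_s,t}.

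\emph{Part (ii), matching lower bound.} Here the hypotheses $(a-1)!+1 \le b$ and $b < (s+1)/2$ are exactly what is needed to invoke the norm-graph machinery as in Lemma \ref{lem:LowerBoundK_m,K_s,t}. The plan is to take $G_2 = H(q,s)$, the projective norm graph from \cite{ARS}, which is $K_{s,t}$-free for $t \ge (s-1)!+1$ (and a fortiori for all $t \ge s$ once $s$ is large enough relative to the parameters — actually one must be slightly careful and note $H(q,s)$ is $K_{s,(s-1)!+1}$-free hence $K_{s,t}$-free for $t \ge (s-1)!+1$; for the range $s \le t < (s-1)!+1$ one instead uses that $K_{a,b}$-count is monotone, or notes the relevant constraint is on $b$ not $t$). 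For the counting, apply the embedding-counting lemma of \cite{AP} (Theorem 4.10 in \cite{KS}): with $G_1 = K_{a,b}$, which has $a+b$ vertices, $ab$ edges, and maximum degree $\max(a,b) = b$, the number of copies of $K_{a,b}$ in an $(n,d,\lambda)$-graph is $(1+o(1))\frac{n^{a+b}}{|\mathrm{Aut}(K_{a,b})|}(d/n)^{ab}$ provided $n \gg \lambda (n/d)^{\Delta}$ with $\Delta = b$. Since for $H(q,s)$ we have $\lambda = q^{(s-1)/2}$, $n = (1+o(1))q^s$, $d = (1+o(1))q^{s-1}$, so $n/d \approx q$ and the condition becomes $q^s \gg q^{(s-1)/2} q^{b}$, i.e.\ $s > (s-1)/2 + b$, which is precisely $b < (s+1)/2$. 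Plugging in $d/n \approx q^{-1}$ gives $\mathcal{N}(H(q,s),K_{a,b}) = \Theta(q^{s(a+b) - ab}) = \Theta(n^{a+b-ab/s})$, matching part (i) up to the constant. (When $a \ne b$, $|\mathrm{Aut}(K_{a,b})| = a!\,b!$; when $a = b$ there is the extra factor $2$, consistent with part (i).)

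\emph{Main obstacle.} The genuinely non-routine points are: (1) verifying the eigenvalue/quasirandomness hypothesis $n \gg \lambda(n/d)^{\Delta}$ reduces exactly to $b < (s+1)/2$ — this is a short computation but is the crux of why that hypothesis appears; and (2) in part (i), squeezing the constant into the exact claimed form rather than just getting the right exponent with \emph{some} constant. Everything else is induction plus the power-mean inequality plus citing \cite{KST}, \cite{AP}, \cite{ARS}, exactly as in the preceding lemmas of this section, so I would present part (i) as a near-verbatim adaptation of the proof of Lemma \ref{lem:UpperBoundK_m,K_s,t} (or even a direct one-step argument without induction, bounding $K_{a,b}$-copies by choosing the $a$-side first), and part (ii) as a direct application of the norm-graph counting already set up in Lemma \ref{lem:LowerBoundK_m,K_s,t}.
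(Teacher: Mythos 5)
Your part (i) plan has the double counting oriented the wrong way, and this is a genuine gap rather than mere bookkeeping. You fix the $a$-side $A$ first and then apply the means inequality in the form $\sum_A |N(A)|^b \le \binom{n}{a}^{1-b/s}\bigl(\sum_A |N(A)|^s\bigr)^{b/s}$, which requires $b\le s$. But the hypotheses of part (i) only give $a\le s$ and $a\le b<t$, so $b$ may well exceed $s$ (e.g.\ $a=s=2$, $b=5$, $t=10$), and in that regime the inequality you need is reversed, so the argument breaks. Moreover, even when $b\le s$, your orientation produces the constant $\frac{1}{b!\,(a!)^{1-b/s}}\binom{t-1}{a}^{b/s}$, not the claimed $\frac{1}{a!\,(b!)^{1-a/s}}\binom{t-1}{b}^{a/s}$. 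The paper's argument (a direct one-step count, no induction) does the opposite: for each $b$-set $B$ let $n_B$ be the number of common neighbors of $B$; the number of copies of $K_{a,b}$ is at most $\frac{1}{a!}\sum_B n_B^{a}$ (halved when $a=b$), the means inequality is applied with exponents $a$ versus $s$ — exactly what $a\le s$ licenses — and $\sum_B n_B^{s}\le (1+o(1))\binom{t-1}{b}n^{s}$ because $K_{s,t}$-freeness forces every $s$-set to have at most $t-1$ common neighbors, so at most $\binom{t-1}{b}$ sets $B$ can share a given $s$-set of common neighbors (this is where $b\le t-1$ enters). So the fix is simply to sum over the $b$-side rather than the $a$-side; as written, your route does not go through under the stated hypotheses and would not yield the stated constant even where it does.

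Part (ii) of your proposal is essentially the paper's proof: take $G_1=K_{a,b}$, $G_2=H(q,s)$, apply the counting lemma of \cite{AP} (Theorem 4.10 in \cite{KS}), and your verification that $n\gg\lambda(n/d)^{\Delta}$ with $\Delta=b$ reduces to $b<(s+1)/2$ is exactly the relevant computation. Two caveats: the hypothesis $(a-1)!+1\le b$ plays no role in the norm-graph argument itself (what it actually needs is $t\ge(s-1)!+1$ for $K_{s,t}$-freeness, plus $a\le b$ so that $\Delta=b$), and your suggested patch for the range $s\le t<(s-1)!+1$ via ``monotonicity of the $K_{a,b}$-count'' goes the wrong way: forbidding $K_{s,t}$ with a smaller $t$ is a stronger constraint, so a lower bound valid for large $t$ does not transfer down. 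The paper is equally terse on this point, so your treatment matches its level of detail, but the parenthetical should not be presented as a proof of that case.
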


\begin{proof} (i) Let $G=(V,E)$ be a $K_{s,t}$-free graph on $n$
vertices. For each subset  $B$ of $b$ vertices, let $n_B$ denote the
number of common neighbors of all vertices in $B$.  The number of copies
of $K_{a,b}$ in $G$ is clearly exactly $\sum_B {{n_B} \choose a}$ for
$b<a$, where the summation here and in what follows is over all
$b$-subsets $B$ of $V$. If $a=b$ the right hand side should be divided
by $2$. We proceed with the case $a<b$ recalling that a factor of $1/2$
can be added if $a=b$. By the means inequality, the number of copies of
$K_{a,b}$ in $G$ is at most $$ \frac{1}{a!} \sum_B n_B^a \leq
\frac{1}{a!}  {n \choose b}^{1-a/s} (\sum_B n_B^s )^{a/s} $$ $$ \leq
(1+o(1)) \frac{n^{b-ab/s}}{a!(b!)^{1-a/s}} ({{t-1} \choose b} n^s)^{a/s}
= (1+o(1)) \frac{1}{a!(b!)^{1-a/s}} ({{t-1} \choose b})^{a/s}
n^{a+b-ab/s}. $$ Here we used the fact that $\sum_B n_B^s \leq (1+o(1))
{{t-1} \choose b} n^s$ since if we have more than ${{t-1} \choose b}$
subsets of cardinality $b$ in $V$ with each of them having the same
$s$-subset among their common neighbors, then we get a copy of
$K_{s,t}$, which is impossible. \vspace{0.2cm}

\noindent (ii) The  projective norm graphs give, as in the proof of
Lemma \ref{lem:LowerBoundK_m,K_s,t}, that if $(a-1)!+1 \leq b < (s+1)/2$
then $ex(n,K_{a,b},K_{s,t}) \geq \Omega(n^{a+b-ab/s}).$ This and part
(i) supply the assertion of part (ii). \end{proof}

\section{Forbidding a fixed tree} \label{sec:HisTree}

In this section we prove Theorems \ref{thm:2trees} and
\ref{thm:TreeAndBi}.

\subsection{Proof of Theorem \ref{thm:2trees}}

Let $G=(V,E)$ be a graph, and let $T$ be a tree on a set
$V(T)=\{u_1, \ldots ,u_t\}$ of $t$ vertices. 
Let $V=V_1 \cup V_2 \cup \cdots \cup V_t$ be a partition of
$V$ into $t$ pairwise disjoint sets. Call a copy of $T$ in $G$ 
{\em proper} if its vertices are $v_1, v_2, \ldots ,v_t$ where
$v_i \in V_i$ and $v_i$ plays the role of $u_i$ in this copy.
For a subset $U \subset V(T)$ and an integer  $h$, call a 
$(U,h)$-blow-up of $T$ {\em proper} if each vertex $v$ of the blow-up 
belongs to $V_i$ if and only if it plays the role of $u_i$ in the
blow up.

Recall that  a graph is $h'$-degenerate if every subgraph of it
contains a vertex of degree at most $h'$. It is easy and well known
that if a graph is not $h'$-degenerate then it contains a copy of
any tree with $h'$ edges.

The main part of the proof is the following lemma. 

\begin{lemma} 
\label{lem:indHyp} 
For every positive integers 
$t,h,h',m$ there is a $C(t,h,h',m)$
so that the following holds. Let $T$ be a tree on a set $V(T)$ of
$t$ vertices, let $G=(V,E)$ be a graph on $n$ vertices
and let $V=V_1 \cup \cdots \cup V_t$ be a partition of $V$.
If $G$ is $h'$ degenerate and it contains at least $C(t,h,h',m)n^{m-1}$
proper copies of $T$, then it must contain a proper $(U,h)$ blow-up
of $T$, such that $T\setminus U$ has $m$ connected components.
\end{lemma}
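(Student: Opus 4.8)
The plan is to prove the lemma by induction on $m$. First I would set up the base case $m=1$: here we need a constant $C(t,h,h',1)$ so that $C(t,h,h',1)n^0 = C(t,h,h',1)$ proper copies of $T$ force a proper $(U,h)$-blow-up with $T\setminus U$ having exactly one connected component. Taking $U=\emptyset$, a $(U,h)$-blow-up of a connected tree $T$ is just $h$ vertex-disjoint copies of $T$; so we simply need enough proper copies of $T$ to extract $h$ pairwise disjoint ones, and since each copy of $T$ has $t$ vertices, any $t(h-1)+1$ proper copies contain $h$ pairwise vertex-disjoint copies by a greedy argument (the union of $h-1$ copies meets at most $t(h-1)$ other copies). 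So $C(t,h,h',1)=t(h-1)+1$ works, independently of $h'$. (If $T$ is disconnected the statement must be read with its stated meaning of ``$m$ connected components of $T\setminus U$''; since $T$ is a tree it is connected, so this subtlety does not arise.)

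For the inductive step, suppose the claim holds for $m-1$ and $G$ has at least $C(t,h,h',m)n^{m-1}$ proper copies of $T$. The idea is to find a single vertex $x$ (in some class $V_j$) that lies in many proper copies of $T$, in the role of $u_j$, where $u_j$ is chosen to be a \emph{leaf} of $T$. By averaging, some vertex $x$ plays the role of $u_j$ in at least $C(t,h,h',m)n^{m-1}/n = C(t,h,h',m)n^{m-2}$ proper copies of $T$. Deleting $x$, each such copy restricts to a proper copy of the tree $T' = T\setminus\{u_j\}$ on $t-1$ vertices inside $G\setminus\{x\}$, all of whose copies use the neighbor of $u_j$ as a vertex adjacent to $N_G(x)$. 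Since $G$ is $h'$-degenerate, so is $G\setminus\{x\}$, and by the induction hypothesis applied with the constant $C(t-1,h,h',m-1)$ — provided $C(t,h,h',m)n^{m-2} \ge C(t-1,h,h',m-1)n^{m-2}$, i.e. we simply require $C(t,h,h',m)\ge C(t-1,h,h',m-1)$ — we obtain a proper $(U',h)$-blow-up $B'$ of $T'$ with $T'\setminus U'$ having $m-1$ components. Now I would try to reattach copies of the component of $T$ containing $u_j$. The crucial point is that the neighbor $w$ of the leaf $u_j$ lies in $B'$ (in some class), and we want to attach $h$ new vertices from $V_j$, each adjacent to the appropriate copy of $w$, to form one more component of the blow-up. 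But a single $x$ only gives one such vertex; the fix is to not pick just one $x$ but to harvest the induction more carefully — apply the induction hypothesis to extract the $(m-1)$-component blow-up using only a small fraction of the copies, then use the remaining (still $\Omega(n^{m-2})$ after discarding $O(n^{m-3})$) copies of $T$ to find, greedily, $h$ distinct vertices of $V_j$ each completing the picture, using $h'$-degeneracy of $G$ to guarantee that from $\Omega(n)$ available endpoints one can always pick $h$ of them that avoid the bounded vertex set already used by $B'$.

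The main obstacle, and the step that needs the most care, is this reattachment: we must ensure that when we glue on the $h$-fold blow-up of the component containing the leaf, we do not accidentally merge it with the already-constructed $(m-1)$-component blow-up $B'$ (which would reduce the component count), and that the newly added blow-up is itself \emph{proper} and \emph{vertex-disjoint} from $B'$ except at $U$. I would handle this by choosing $U$ to contain $U'$ together with the single vertex (the copy of $w$) that the new component attaches to, checking that $T\setminus U$ then has exactly $(m-1)+1 = m$ components — here it is important that $u_j$ was chosen as a leaf so that removing its unique neighbor detaches exactly one extra component — and by a counting argument showing that the number of proper copies of $T$ ``incompatible'' with a fixed bounded set of vertices is only $O(n^{m-3})$, hence negligible, so that enough valid choices remain to pick $h$ disjoint ones greedily. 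The bookkeeping of how $C(t,h,h',m)$ is built from $C(t-1,h,h',m-1)$ and the $O(\cdot)$ error terms is routine once the combinatorial picture is fixed; the role of $h'$-degeneracy is precisely to keep neighborhoods and hence the ``incompatible'' count under control and to allow the greedy disjoint selection.
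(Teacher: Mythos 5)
Your base case is incorrect. For $m=1$ the hypothesis only supplies a \emph{constant} number of proper copies of $T$, and constantly many copies need not contain even two vertex-disjoint ones: take $T=K_2$ and let all copies share a single vertex (a star). So the greedy claim that $t(h-1)+1$ proper copies contain $h$ pairwise disjoint copies is false, and fixing $U=\emptyset$ is not an option — for $m=1$ the conclusion genuinely requires a nonempty $U$ in general (in the star example the correct witness is a $(U,h)$ blow-up with $U$ consisting of one endpoint of the edge). In the paper the only true base case is $t=2$, $m=1$, handled by K\"onig's theorem ($(h-1)^2+1$ edges of a bipartite graph give an $h$-star or an $h$-matching); the statement for $m=1$ and general $t$ is not a base case but is carried through the induction, which is on $m+t$, not on $m$ alone.

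The induction step has a matching gap. Your reattachment creates a new \emph{singleton} component $\{u_j\}$, which forces the copy of the leaf's neighbor $w=u_2$ to lie in $U$, i.e.\ to be a single fixed vertex, and then you need $h$ vertices of $V_j$ all adjacent to that one fixed vertex. Your greedy/degeneracy fix does not deliver this: the $h$ vertices of $V_j$ harvested from other proper copies of $T$ are adjacent to the copies of $w$ inside \emph{their own} copies, not to the fixed copy of $w$ in $B'$, and $h'$-degeneracy only bounds the total number of edges — it cannot produce a vertex of degree at least $h$ into $V_j$ anchored to $B'$. When every vertex playing the role of $w$ has fewer than $h$ neighbors in the leaf's class (which can certainly happen), no new singleton component can be attached at all; the correct move there is different: apply the induction hypothesis to $T'=T\setminus\{u_1\}$ with the \emph{same} number of components $m$ (possible because $t$ drops, which is why the induction is on $m+t$) but with blow-up parameter inflated to $h^2$, and then extend the component containing $u_2$ by the leaf via a star-or-matching argument on the $h^2$ chosen leaf-edges. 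This low-degree case is the crux of the paper's proof and is absent from your plan. Note also that the paper's use of degeneracy is confined to the high-degree case: from $\sum_v d_1(v)\le h'n$ one finds a single vertex $v_2$ with $d_1(v_2)\ge h$ supporting $\Omega(n^{m-2})$ proper copies of $T'$, which is quite different from a greedy disjointness argument.
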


\begin{proof} We apply induction on $m+t$.

\textbf{The base case m+t=3:} In this case $t=2$, that is, $T$ is an
edge, and $m=1$. Let $G=(V,E)$ and $V=V_1 \cup V_2$ 
be a graph and  a partition of its vertex set 
as in the statement of the lemma, and suppose it contains at least
$(h-1)^2+1$ proper copies of $T$. These copies form a bipartite
graph with vertex classes $V_1,V_2$ and hence, by K\"onig's
Theorem, it 
must contain either a star or a matching of size $h$. A matching is a
proper $(\emptyset, h)$ blow-up of $T=K_2$ 
and a star is a proper $(\{v\},h)$ blow-up,
where $v$ is one of the vertices of $T$. In both cases $T\setminus U$
has $1$ connected component. This establishes the base case.

\textbf{Induction step}  Assuming the assertion holds for any $m$
and $t$ satisfying $m+t<k$, we prove it for $m+t=k$, $(k \geq
4)$. Let $T$ be a tree on $t$ vertices, and suppose that
$V(T)=\{u_1,...,u_t\}$ where $u_1$ is a
leaf and $u_2$ is its unique neighbor.
Let $G=(V,E)$ be an $h'$-degenerate graph 
with $n$ vertices,
and let $V=V_1 \cup \ldots V_t$ be a partition of its vertex
set. Suppose that $G$ contains at least
$C(t,h,h',m)n^{m-1}$ proper copies of $T$. We have to show that it
contains a 
proper $(U,h)$ blow-up of $T$ such that $T\setminus U$ has $m$
connected components.  

For each vertex $v \in V_2$ let $d_1(v)$ be the number of its
neighbors in $V_1$.
Furthermore, put $T'=T\setminus
\{u_1\}$ and let $\mathcal{N}_{u_2}(T',v)$ be the number of copies of
$T'$ in which $v$ plays the role of $u_2$ and for each $2 < i \leq
t$ the vertex playing the role of $u_i$ lies in $V_i$.
The following clearly holds, with $C=C(t,h,h',m)$:
\begin{align*} 
Cn^{m-1} \leq
& \sum_{v\in V_2} d_1(v)\cdot \mathcal{N}_{u_2}(T',v) \\ &
=\sum_{v\in
V_2,d_1(v)\geq h} d_1(v)\cdot \mathcal{N}_{u_2}(T',v)+\sum_{v\in
V_2,d_1(v)< h} d_1(v)\cdot \mathcal{N}_{u_2}(T',v) 
\end{align*} 
One of
the summands must be at least $\frac{C}{2} n^{m-1}$. We consider both
cases.

\underline{Case 1}: $\sum_{v\in V_2,d_1(v)\geq h} d_1(v)\cdot
\mathcal{N}_{u_2}(T',v) \geq \frac{C}{2} n^{m-1}$.

If $m=1$, then there is a vertex $v \in V_2$ with $d_1(v)\geq h$ and
$\mathcal{N}_{u_2}(T',v)\geq 1$. This implies the existence of a
proper $(V(T)\setminus\{u_1\},h)$ blow-up of $T$. 
If $m>1$, as $G$ has at most $h'n$ edges (since it is
$h'$-degenerate),
$\sum_{v\in V_2,d_1(v)\geq h}
d_1(v)\leq |E(G)|\leq h'\cdot n$, and thus there must be a vertex 
$v_2$ for which
$\mathcal{N}_{u_2}(T',v_2)\geq  \frac{C}{2h'} n^{m-2}$.

Consider the induced subgraph $G'$ of $G$ on the set of vertices 
$V'=\{v_2\} \cup V_3 \cup \cdots \cup V_t$, with this partition
into $(t-1)$ disjoint sets. This graph contains
all the proper copies of  
$T'$ in which
$v_2$ plays the role of $u_2$. $G'$ is also $h'$-degenerate, and
contains
at least $\frac{C'}{2h} n^{m-2}$ proper copies of $T'$.
As $|V(T')|=t-1$ (and also $m-1<m$) we can use the induction assumption on
$G'$, and find a proper $(U,h)$ blow up of $T'$ in $G'$ 
in which $T'\setminus
U$ has $m-1$ connected components and $u_2\in U$ as there is only one
vertex, $v_2 \in V_2$, and hence only $v_2$ can play the role of
$u_2$ in $G'$. The same set $U$ thus gives the
required proper $(U,h)$ blow up of $T$, as $T \setminus U$  has all of the
connected components of $T' \setminus U'$  together with a new connected
component which is $\{u_1\}$. There is a copy of this proper 
$(U,h)$ blow up of
$T$ as we can complete the $(U,h)$ blow up of $T'$ with $h$ neighbors
of $v_2$.

\underline{Case 2}: $\sum_{v\in V_2,d_1(v)< h} d_1(v)\cdot
\mathcal{N}_{u_2}(T',v) \geq \frac{C}{2} n^{m-1}$.

Let $G'$  be the induced subgraph of $G$ on
$V'_2 \cup \cdots \cup V_t$,  where $V'_2$ is the set of
all vertices of $V_2$ satisfying
$1 \leq d_1(v)<h$. As
$\sum_{v\in V_2,1\leq d_1(v)< h} d_1(v) \mathcal{N}_{u_2}(T',v) \geq
\frac{C}{2} n^{m-1}$ and $|V(T')|=t-1$ we can use the induction
assumption and find a proper $(U',h^2)$ 
blow-up of $T'$ with $T' \setminus U'$
having $m$ connected components. It is left to complete this blow-up to
the required proper $(U,h)$ blow up of $T$.

Consider the vertices that play the role of $u_2$ in the $(U',h^2)$ blow
up of $T'$.  There are two options: either there is only one such vertex
or there are $h^2$ of them. If it is a single vertex then we complete
the blow-up into a proper 
$(U,h)$ blow up of $T$ by taking $U=U'\cup\{u_1\}$
(this is actually a $(U,h^2)$ blow-up). If there are $h^2$ of them,
consider
the bipartite graph consisting of $h^2$ edges, $1$ from each copy of
$u_2$ to an arbitrarily chosen neighbor of it in $V_1$. This graph must
contain either a matching of size $h$ or a star with $h$ edges. A
matching will leave us with the same $U=U'$ and for a star we take
$U=U'\cup \{u_1\}$. In both cases the proper 
$(U,h)$ blow up is contained in
$G$ and $T\setminus U$ has $m$ connected components. 
\end{proof}

We are now ready to prove Theorem \ref{thm:2trees}.

\begin{proof}[Proof of Theorem \ref{thm:2trees}] 
Let $H$ and $T$ be
trees with $h$ and $t$ vertices, respectively, and suppose 
$m=m(T,H)$. By the definition of $m(T,H)$ there exists a $U$
such that $T \setminus U$ has $m$ connected components, and the $(U,h)$
blow-up of $T$ 
has no copy of $H$. Using the same set $U$ define $G_T$ to be a
$(U,\frac{n-|U|}{t-|U|})$ blow-up of $T$.

We next show that $G_T$ is $H$ free and has at least 
$c_1(t,h) n^m$ copies of
$T$. If there is a copy of $H$ in $G_T$, then it uses $h$ vertices
and is thus contained in a $(U,h)$ blow up of $T$. But by the
definition of $m$ this blow-up is $H$ free, so $G_T$ must be $H$ free
too. To find $c_1(t,h)n^m$ copies of $T$, recall that $T \setminus U$
has $m$ connected components, and note that 
any choice of  a copy of each connected
component can be completed into a copy of $T$.

On the other hand, if $ex(n,T,H)>t^tC(t,h,h,m)n^m$ take a random
partition of the vertex set $V$ of the extremal graph $G$ into $t$
pairwise disjoint sets $V_1, \ldots, V_t$. Each fixed copy of $T$
becomes a proper copy with respect to this partition with
probability $1/t^t$. Thus, by linearity of expectation, the
expected number of proper copies is at least $C(t,h,h,m)n^m$, and
hence there is a partition with at least that many proper copies. 
As $G$ is $H$-free it is $h$-degenerate.
By the lemma
above it contains a $(U,h)$ blow-up of $T$ where
$T\setminus U$ has $m+1$ connected components. This contradicts the
maximality of $m(T,H)$, so $ex(n,T,H)\leq t^tC(t,h,h,m)n^m$.  
Finally 
define $c_2(t,h)=\max _{1\leq m \leq t/2} C(t,h,h,m)$. 
\end{proof}

\subsection{Proof of Theorem \ref{thm:TreeAndBi}}

We need the following well known result, see, for example, Theorem
7.3 in \cite{BM} for a proof.

\begin{lemma}
\label{rem:AlphaGamma}

In any bipartite graph with no isolated
vertices, the number of vertices in a maximum independent set is equal to
the number of edges in a minimum edge-cover. 

\end{lemma}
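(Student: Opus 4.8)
The plan is to derive this from three classical facts, two elementary and one (König's theorem) already invoked elsewhere in this paper and hence available for use. Throughout, let $G$ be a bipartite graph on $n$ vertices with no isolated vertices, and write $\alpha$ for the size of a maximum independent set, $\tau$ for the size of a minimum vertex cover, $\nu$ for the size of a maximum matching, and $\rho$ for the size of a minimum edge cover (which is well defined precisely because there are no isolated vertices). The goal is to prove $\alpha = \rho$.

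First I would record the trivial complementation identity $\alpha + \tau = n$: a set $S \subseteq V(G)$ is independent if and only if its complement $V(G)\setminus S$ is a vertex cover, so maximizing $|S|$ is the same as minimizing $|V(G)\setminus S|$. This holds in every graph and uses no structural hypothesis.

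The main step is Gallai's identity $\nu + \rho = n$, which I would establish by the standard star-decomposition argument. For one direction, take a maximum matching $M$; it saturates $2\nu$ vertices, and since $G$ has no isolated vertices each of the remaining $n - 2\nu$ vertices can be covered by adjoining a single incident edge, producing an edge cover of size $\nu + (n-2\nu) = n - \nu$, so $\rho \le n - \nu$. For the reverse, take a minimum edge cover $F$ and observe that every connected component of the graph $(V(G),F)$ must be a star: a component containing a path on four vertices (with consecutive edges $ab$, $bc$, $cd$) would permit deletion of the middle edge $bc$ while still covering all vertices, contradicting minimality of $F$. If $F$ decomposes into $c$ vertex-disjoint stars, then counting vertices gives $n = |F| + c = \rho + c$, and choosing one edge from each star yields a matching of size $c = n - \rho$, whence $\nu \ge n - \rho$, i.e. $\rho \ge n - \nu$. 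Combining the two inequalities gives $\rho = n - \nu$, that is $\nu + \rho = n$. This structural dichotomy for minimum edge covers is the single point requiring genuine care, and it is where I expect the (modest) main obstacle to lie.

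Finally I would invoke König's theorem, $\tau = \nu$, valid for bipartite graphs and already used in the base case of Lemma \ref{lem:indHyp}. Chaining the three identities then gives $\alpha = n - \tau = n - \nu = \rho$, which is exactly the assertion that in a bipartite graph with no isolated vertices the maximum independent set and the minimum edge cover have the same size.
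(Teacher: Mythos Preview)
Your argument is correct: the chain $\alpha = n - \tau = n - \nu = \rho$ via complementation, K\"onig's theorem, and Gallai's identity is the standard route, and your verification of Gallai's identity (extend a maximum matching to an edge cover for one inequality; decompose a minimum edge cover into stars and select one edge per star for the other) is sound. One tiny point you might make explicit is that a minimum edge cover is acyclic (delete any edge of a cycle and every vertex on the cycle is still covered), so each component is a tree, and a tree that is not a star necessarily contains a path on four vertices---this justifies the ``path on four vertices'' reduction you invoke.

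As for comparison with the paper: there is nothing to compare. The paper does not supply its own proof of this lemma; it simply cites Theorem~7.3 of \cite{BM}. Your write-up is exactly the classical proof one finds there (K\"onig plus Gallai), so you have in effect filled in what the paper outsourced.
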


%

\begin{proof}[Proof of Theorem \ref{thm:TreeAndBi} ]

We prove the equivalence of the statements by deriving (2) from (1), (3)
from (2) and (1) from (3).

$1 \Rightarrow 2$ Assume $ex(n,T,H)=\Theta(n^{\alpha(T)})$. Fix any
minimum edge cover $\Gamma$ of $T$ and enumerate the vertices of $T$
arbitrarily $\{u_1,..,u_t\}$, where $t$ is the number of vertices
of $T$ . Let $G=G_T$ be an extremal graph,
that is, a graph on $n$ vertices with $cn^{\alpha(T)}$ copies of $T$ and no
copy of $H$. Enumerate the vertices of $G_T$ randomly, and  call a copy
of $T$ monotone if it is spanned by a set of vertices enumerated $i_1<
..< i_t$ where $i_j$ plays the role of $u_j$ in $T$. 
By linearity of expectation the expected number of monotone copies
of $T$ in $G$ is
$\frac{c}{t!} n^{\alpha(T)}=
c'n^{\alpha(T)}$. Fix a numbering with at least that many monotone
copies. We next show
that this graph must contain a $(U,h)$ blow up of $T$ for some
choice of a $U(\Gamma)$-set $U$.

Denote the set of 
edges in the edge cover $\Gamma$ by $\{e_1,..,e_{\alpha(T)}\}$. We
can map the monotone 
copies of $T$ to choices of edges that play the role of $\Gamma$
so there must be at least 
$c'n^{\alpha(T)}=c'n^{|\Gamma|}$ such choices (the
equality is by Lemma \ref{rem:AlphaGamma}). Consider the following
hypergraph $\HY=(V_{\HY},E_{\HY})$. The vertices $V_{\HY}$ are the edges
of $G$ and a set 
$\{v_1,...,v_{\alpha(T)}\}$ forms an edge in $E_{\HY}$ 
if the corresponding edges in $G$
span a monotone copy of $T$, 
where each edge plays its enumerated role in $\Gamma$. By the
assumption on $G$, we have $|E_{\HY}|=c'n^{\alpha(T)}$, and
$|V_{\HY}|\leq hn$. By the main theorem in \cite{Er} there is a
$K^{\alpha(T)}_{s,..,s}$ in our hypergraph, where $s=h^2$, provided
$n$ is sufficiently large. Therefore
there are disjoint sets of vertices $U_1,..,U_{\alpha(T)}\subset
V_{\HY}$ such that for any choice of $u_i\in U_i$, 
$\{u_1,...,u_{\alpha(T)}\}\in E_{\HY}$ and $|U_i|=h^2 $ for all $i$.

Returning to $G_T$ note that
the $K^{\alpha(T)}_{s,..,s}$ in our hypergraph provides
pairwise disjoint sets of edges $E_1,..,E_{\alpha(T)}\subset
E(G_T)$, each of size $s=h^2$, such that
any choice of a single edge from each set $E_i$ spans a monotone 
copy of $T$ in $G$,
where the edge from $E_i$ plays the role of the edge $e_i$ in the copy.
We next show that $E_1,..,E_{\alpha(T)}$  and the edges connecting
them in $G$ contain a $(U,h)$ blow up of
$T$, with $U$ being a $U(\Gamma)$-set.

To this end we define the set $U\subset V(T)$. First note that a
minimum 
edge cover does not contain a path of length $3$ and hence
$\Gamma$ must be a union of stars
and single edges. Define the complement $U^c$ of $U$
in the following way. If the edges
$e_{i_1},..,e_{i_k}\in E(T)$ form a star in $\Gamma$ take the
leaves 
of the star into $U^{c}$. For a single edge in $\Gamma$, say
$e_j$, consider the corresponding set $E_j$. It must contain
either a star with at least $h$ edges or a matching with
at least $h$ edges. If it contains an $h$-star, take into
$U^{c}$ the endpoint which is not the center of the star. If it contains
a matching of size $h$, take into $U^{c}$ both the vertices of
$e_j$. Finally, put
$U=V(T)\setminus U^{c}$.

It is left to show that $U$ is indeed a $U(\Gamma)$-set with the
required properties. We first show that
the connected components in $T\setminus U$ have the needed properties.
As $V(T)\setminus U = U^{c}$, the choice of $U^{c}$ ensures that
each edge
in $\Gamma$ has a non-empty intersection with a connected component in
$T\setminus U$. It is left to show that no connected component in
$U^{c}$ can intersect two edges of $\Gamma$.

Assume towards contradiction that there is a connected component in
$U^{c}$ that intersects more than one edges of $\Gamma$. Then there
must be two vertices $u_i,u_j$ chosen into $U^{c}$ that are
connected by an edge of $E(T)\setminus \Gamma$. This means that in
$G_T$ there is a set of vertices of size at least $h$ corresponding to
$u_j$ and another set of size $h$ corresponding to $u_i$, where 
any choice of
two vertices, one from each set, can be completed into a copy of $T$.
Thus all the vertices in the first set must be connected to all
those
in the second. But in this case $G_T$ contains a complete bipartite
graph $K_{h,h}$ and hence 
contains a copy of $H$, contradicting the assumptions.

We conclude that in $T\setminus U$, each connected component
intersects a single edge of $\Gamma$, and that each edge in $\Gamma$
intersects a single connected component in $T\setminus U$, as needed.
Note also that by the discussion above 
$T\setminus U$ is a vertex disjoint union of edges and single
vertices.

$G_T$ contains a $(U,h)$ blow-up of $T$, 
as it contains $h$ copies of each
vertex in $U^{c}$ and they are connected in $G$ as needed to
form the required blow-up.

$2\Rightarrow 3$ This is obvious.

$3\Rightarrow 1$ Assume there is a minimum
edge cover $\Gamma$ of $T$ and a
$(U,h)$ blow up of $T$ that does not contain a copy of $H$ with $U$ being a
$U(\Gamma)$ set. Any $H$ free graph has at most $hn$ edges, thus by
\cite{Al} the number of copies of $T$ in such a graph is at most
$O(n^{\alpha(T)})$, providing the required upper bound.

For the lower bound let $G_T$ be a $(U,\frac{n-|U|}{t-|U|})$ blow
up of $T$. We claim that if 
the $(U,h)$ blow up of $T$ 
does not  contain a copy of $H$, then $G_T$ does not
contain one either. Indeed, as in the proof of Theorem
\ref{thm:2trees},
if we assume that the $(U,\frac{n-|U|}{t-|U|})$ blow-up 
does contain a copy of
$H$ then this copy uses at most $h$ vertices 
and hence must be
contained in the $(U,h)$ blow-up as well, contradicting the
assumption.

$T\setminus U$ has $\alpha(T)=|\Gamma|$ 
connected components as $U$ is a $U(\Gamma)$-set.
There are $\Theta(n)$
choices for each connected component in $T\setminus U$, and each choice
produces a copy of $T$ in $G_T$. 
Thus we have $\Theta(n^{\alpha(T)})$ copies
and $ex(n,T,H)=\Theta(n^{\alpha(T)})$.
\end{proof}

Theorem \ref{thm:TreeAndBi} provides a characterization of the
pairs $(T,H)$ of a bipartite graph $T$ and a tree $H$ for which
$ex(n,T,H)=\Theta(n^{\alpha(T)})$. This characterization yields the
following result about the complexity of the corresponding
algorithmic problem.

\begin{theorem} 
The  problem of deciding, for a
given input consisting of a bipartite graph $T$ and a tree $H$, if
$ex(n,T,H)=\Theta(n^{\alpha(T)})$ is $co-NP$-hard.
\end{theorem}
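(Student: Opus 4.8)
The plan is to reduce from a known co-NP-hard problem about trees and subgraph containment. The natural candidate is the following: given a tree $H$ and a second tree $R$ (or a bipartite graph), decide whether $R$ contains a copy of $H$; this subtree-containment type question, or rather its complement, should be NP-hard in the relevant regime. More precisely, I would use Theorem \ref{thm:TreeAndBi}: the predicate $ex(n,T,H)=\Theta(n^{\alpha(T)})$ is equivalent to the combinatorial condition that for \emph{some} (equivalently, by the theorem, for \emph{every}) minimum edge-cover $\Gamma$ of $T$ there is a choice of a $U(\Gamma)$-set $U$ whose $(U,h)$ blow-up of $T$ is $H$-free. The negation of this — that for every minimum edge-cover $\Gamma$ and every $U(\Gamma)$-set $U$, the $(U,h)$ blow-up of $T$ \emph{contains} a copy of $H$ — is an $\exists$-statement (exhibit a copy of $H$ in a polynomially-sized graph, for each $\Gamma,U$, of which there may be many; one must be careful here). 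So first I would check that deciding the \emph{negation} lies in NP: a certificate is, for each minimum edge-cover and each $U(\Gamma)$-set, an embedding of $H$; the subtlety is that the number of minimum edge-covers or $U(\Gamma)$-sets could be exponential, so instead I would argue that since all $(U,h)$ blow-ups for fixed $\Gamma$ are essentially determined up to the structure already pinned down, and by Theorem \ref{thm:TreeAndBi} the quantifier over $\Gamma$ and $U$ is "for all", one needs the complementary problem to be in NP — which actually suggests the clean route is to show the \emph{containment} version is NP-hard directly, making the original co-NP-hard.

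The core of the argument is the NP-hardness reduction. I would reduce from an established NP-hard problem — a good choice is a variant of \textsc{Subforest Isomorphism} or, more robustly, from \textsc{3-Partition} or \textsc{Bin Packing} encoded through tree structures, or from the NP-hardness of deciding whether a given tree $H$ embeds into a specified "blow-up-like" host tree. Concretely: given an instance of an NP-hard packing/partition problem, I would construct in polynomial time a bipartite graph $T$ and a tree $H$ so that $H$ embeds into the relevant $(U,h)$ blow-up of $T$ (for all admissible $U(\Gamma)$-sets) if and only if the packing instance is a YES-instance. The freedom in choosing $T$ lets me control $\alpha(T)$, the minimum edge-covers, the $U(\Gamma)$-sets, and hence the shape of the blow-up; the freedom in choosing $H$ lets me encode the target "demand" that must be met by the blow-up's component structure. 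The blow-up replaces each component of $T\setminus U$ by $h$ disjoint copies hanging off $U$, so its local structure around each vertex of $U$ is a union of many parallel branches — this is exactly the kind of host graph in which embedding a tree $H$ whose branch-lengths encode numbers becomes a combinatorial packing question.

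The main obstacle will be controlling \emph{which} $U(\Gamma)$-sets can arise, since Theorem \ref{thm:TreeAndBi} quantifies over all minimum edge-covers $\Gamma$ and all $U(\Gamma)$-sets $U$: I need the reduction to force this quantifier to collapse — e.g. design $T$ so that the minimum edge-cover is essentially unique (up to symmetry) and so that the $U(\Gamma)$-sets are all equivalent under an automorphism, so that "for all $\Gamma, U$" becomes "for the one relevant $(U,h)$ blow-up", turning the condition into a single tree-embedding question. Ensuring this rigidity while still having enough expressive power to encode an NP-hard instance is the delicate part. A secondary obstacle is verifying membership in co-NP (i.e. that the negation is in NP) cleanly despite the "for all $\Gamma, U$" quantifier — here I would note that, by Theorem \ref{thm:TreeAndBi}, conditions (2) and (3) are equivalent, so the "for all" version (2) is equivalent to the "exists" version (3); the negation of (3) — "for every $\Gamma$ and $U$ the blow-up contains $H$" — still has a universal quantifier, so I would instead observe that the \emph{positive} problem "$ex(n,T,H)=\Theta(n^{\alpha(T)})$" is in co-NP because its negation is: a disqualifying certificate is a single minimum edge-cover $\Gamma$ together with a proof (itself short, via the matching/star structure of minimum edge-covers and Lemma \ref{rem:AlphaGamma}) that \emph{every} $U(\Gamma)$-set fails — and the set of $U(\Gamma)$-sets for a fixed $\Gamma$ has enough structure (each component of $T\setminus U$ is an edge of $\Gamma$ or a single vertex) that failure can be checked in polynomial time. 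Once co-NP membership and NP-hardness of the complement are both in hand, co-NP-hardness of the stated problem follows immediately.
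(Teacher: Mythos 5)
You have correctly identified the overall strategy — use Theorem \ref{thm:TreeAndBi} to translate the question into ``does a certain blow-up of $T$ contain $H$?'', then engineer $T$ so that the quantification over minimum edge-covers $\Gamma$ and $U(\Gamma)$-sets collapses to a single blow-up, and finally encode an NP-hard problem in the resulting embedding question. But the proposal stops exactly where the proof has to begin: you never produce the reduction. You list several candidate source problems (\textsc{Subforest Isomorphism}, \textsc{3-Partition}, \textsc{Bin Packing}, generic tree-into-host embedding) without committing to one, and you explicitly flag the rigidity of $\Gamma$ and of the $U(\Gamma)$-sets as ``the delicate part'' to be ensured later. That delicate part \emph{is} the theorem; an NP-hardness claim with the gadget construction left as an obstacle to be overcome is not a proof. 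The paper's resolution is concrete and worth knowing: take any graph $G$ on $m$ vertices of minimum degree at least $4$ (Hamilton path remains NP-complete under this restriction, by a small padding argument), let $T$ be the vertex--edge incidence bipartite graph of $G$ with sides $A=V(G)$ and $B=E(G)$, and let $H$ be a path of length $2m+1$. Since every vertex of $B$ has degree $2$ and every vertex of $A$ has degree at least $4$, Hall's theorem yields a minimum edge-cover $\Gamma$ all of whose components are stars with at least two leaves centered in $A$, which forces the unique $U(\Gamma)$-set to be $A$; the $(A,2m+2)$ blow-up then contains a path of length $2m+1$ if and only if $G$ has a Hamilton path, because such a path must alternate between $A$ and $B$ and can use each $A$-vertex only once. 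This is the single construction your plan is missing.

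A secondary point: the lengthy discussion of whether the problem (or its complement) lies in NP is unnecessary for the statement being proved. Co-NP-hardness requires only a polynomial-time reduction from the complement of an NP-hard problem; membership in co-NP is a separate (and here unclaimed) assertion, so the worries about exponentially many edge-covers and certificates can simply be dropped.
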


\begin{proof} 
Let $G$ be a graph on $m$ vertices with minimum degree at
least $4$. Let $T$ be the bipartite graph $(A\cup B,E)$ with $A$ being the
set of vertices of $G$, $V(G)$, and $B$ being the set of
edges of $G$, $E(G)$. The
edges of the bipartite graph are as follows, a couple $\{v,e\}$ is an
edge in $T$ if $v\in e$ in $G$. Let $H$ be a path of length $2m+1$.

\begin{claim}
\label{c95}
$ex(n,T,H)=\Theta(n^{\alpha(T)})$ if and only if the graph 
$G$ does not contain a Hamilton path.  
\end{claim}

Before proving this claim note that
it is well known that the
problem of deciding if an input  graph $G$ contains a Hamilton path
is $NP$-complete (see, e.g., \cite{GJ}). It is not difficult to
show that for any
fixed $\delta$ (and in particular for $\delta=4$) this problem
remains $NP$-hard even when restricted to input graphs of minimum
degree at least $\delta$. To see this, consider a graph 
$F$ with minimum degree $d$. Let $F'$ be the graph obtained
from $F$ by adding to it a set of $(d+2)$ new vertices that
form a clique, and by joining one of the vertices of this clique 
to all vertices of $G$. It is easy to check that $F$ has a Hamilton 
path if and only if $F'$ has such a path, and that the minimum
degree of $F'$ is $d+1$. Repeating this argument we conclude
that indeed the Hamilton path problem  remains $NP$-hard when
restricted to input graphs of minimum degree at least $\delta$.
It thus suffices to prove Claim \ref{c95} in order to establish
the theorem.

To prove this claim, we choose a specific minimum 
edge cover $\Gamma$ in $T$, and show that for this edge cover
checking if the second condition
in Theorem \ref{thm:TreeAndBi} holds is equivalent to deciding if
the graph $G$ contains a Hamilton path.

Since the degree of each vertex in $A$ is at least $4$ and the
degree of each vertex of $B$ is $2$, it follows from Hall's Theorem
that the graph $T$ contains $|A|$ vertex disjoint stars, one
centered at each vertex of $A$, with each star having two leaves.
We can now complete these arbitrarily into a minimum edge cover, by
connecting each vertex of $B$ that does not lie in these stars to
an arbitrary neighbor in $A$. This provides a minimum edge cover
$\Gamma$ in which every connected component is a star with at
least two leaves.

The only possible $U(\Gamma)$-set for this $\Gamma$ is $A$. The
$(A,2m+2)$ blow-up of $T$ 
contains a copy of $H$, which is a path of length $2m+1$, 
if and only if  $G$ contains a Hamilton path, as such a path must alternate
between $A$ and $B$, and can visit each vertex in $A$ only once.

\end{proof} %

\section{Concluding remarks and open problems}

\begin{itemize} 
\item 
In Section \ref{sec:HisTree} we have shown several
cases in which when 
$H$ is a tree, $ex(n,T,H)=\Theta(n^k)$ where $k$ is an
integer. We believe that this phenomenon is more general, and that 
if $H$ is a tree then for any graph $T$,
$ex(n,T,H)=\Theta(n^{k(T,H)})$ for some integer
$k=k(T,H)$. 
\item One of the cases we focused on is $ex(n,K_3,H)$.  Even in
this special case there are many difficult problems that remain open.
One such problem that received a considerable amount of attention is the
case that $H$ is the $2$-book, that is, two triangles sharing an edge.
This is equivalent to the problem of obtaining tight bounds for the
triangle removal lemma, which is wide open despite the fact we know that
here $n^{2-o(1)} \leq ex(n,K_3,H) \leq o(n^2)$ and despite some recent
progress in \cite{Fo}.  

The determination of
$ex(n,K_3,H)$ is  complicated in many cases, 
and we do not even know its correct
order of magnitude for some simple graphs like odd cycles.
In this specific case, however, it may be that the lower bound in
(\ref{e12}) and the upper bound in Proposition \ref{p13} differ only by
a constant factor, as it may be true that the functions  $ex(m,C_{2k})$
and $ex_{bip}(m,C_4, C_6, \ldots ,C_{2k})$ differ only by a constant
factor. The problem of determining the correct order of magnitude of
$ex(n,K_3,K_{s,s,s})$ also seems complicated, the method in \cite{Ni}
yields some upper estimates. 
\item 
If $G$ contains no copy of some fixed
tree $H$ on $t+1$ vertices, then the minimum degree of $G$ is smaller
than $t$. Thus there is a vertex $v$ contained in at most ${{t-1}
\choose m}$ copies of $K_m$, and we can omit it and apply induction to
conclude that in this case $ex(n,K_m,H) < t^m n/m!$. It may be that for
any such tree the $H$-free graph $G$ on $n$ vertices maximizing the
number of 
copies of $K_m$ is a disjoint union of cliques all of which besides
possibly one are of  size $t$. As mentioned in the beginning of Section
4 this is open even for  $H=K_{1,t}$. 
\item 
As done for the classical
Tur\'an problem of studying the function $ex(n,{\cal H})$ for finite or
infinite classes ${\cal H}$ of graphs, the natural extension $ex(n,T,
{\cal H})$, which is the maximum number of copies of $T$ in a graph on
$n$ vertices containing no member of $\HH$, can also be studied. Unlike
the case $T=K_2$, there are simple examples here in which
$\HH=\{H_1,H_2\}$ contains only two graphs, and $ex(n,T,\HH)$ is much
smaller than each of the quantities $ex(n,T,H_1)$ and $ex(n,T,H_2)$. It
will be interesting to further explore this behavior. \item Another
variant of the problem considered here is that of trying to maximize the
number of  copies of $T$ in an $n$-vertex graph, given the number of
copies of $H$ in it. The case $H=K_2$ has been studied before, see
\cite{Al}, \cite{JOR}, but the general case seems far more complicated.
\item One of the exciting developments in Extremal Combinatorics in
recent years has been the  study of sparse random analogs of classical
combinatorial results, like Tur\'an's Theorem, Ramsey's Theorem, and
more. This was initiated in \cite{FR} and studied in several papers
including \cite{RR} and \cite{KLR}, culminating in the papers \cite{CG}
and \cite{Sc}.  See also \cite{BMS} and \cite{ST} for a more recent
effective approach  for investigating these problems. The natural sparse
random version of the  basic problem considered here is the study of the
following function. For two graphs $H$ and $T$ with no isolated vertices
and for a real $p \in [0,1]$, let $ex(n,T,H,p)$ be the expected value of
the maximum number of copies of the graph $T$ in an $H$-free subgraph of
the random graph $G(n,p)$. Thus $ex(n,T,H,1)$ is the function
$ex(n,T,H)$ studied here. The  behavior of $ex(n,T,H,p)$ for $T=K_2$ is
quite well understood in many cases, by the results in the papers
mentioned above, and it seems interesting to investigate the behavior of
the more general function. \end{itemize} \vspace{0.2cm}

\noindent {\bf Acknowledgment:} We thank Andrey Kupavskii and Benny
Sudakov for helpful discussions, and Dhruv Mubayi for informing us about
\cite{KMV}.


\begin{thebibliography}{99}
\bibitem{Al} N. Alon, On the number of subgraphs of prescribed type of
graphs with a given number of edges, Israel J. Math. 38 (1981), 116-130.

\bibitem{AFKS} N. Alon, E. Fischer, M. Krivelevich and M. Szegedy,
Efficient testing of large graphs. Combinatorica 20(4)  (2000), 451-476.

\bibitem{AP} N. Alon and P. Pudlak, Constructive lower bounds for
off-diagonal Ramsey numbers, Israel J. Math. 122 (2001), 243-251.

\bibitem{ARS} N. Alon, L. R\'onyai and T. Szab\'o, Norm-graphs:
variations and applications, J. Combinatorial Theory, Ser. B 76 (1999),
280-290.

\bibitem{AR} N. Alon and V. R\"odl, Sharp bounds for some multicolor
Ramsey numbers, Combinatorica 25 (2005), 125-141.

\bibitem{BMS}  J. Balogh, R. Morris and W. Samotij, Independent sets in
hypergraphs, J. Amer. Math. Soc. 28 (2015), 669-709.

\bibitem{Be} F.~A.~Behrend, On sets of integers which contain no three
terms in arithmetic progression, Proc. National Academy of Sciences USA
32 (1946), 331--332.

\bibitem{BG} B. Bollob\'as and E. Gy\H{o}ri, Pentagons vs. triangles,
Discrete Math. 308 (2008), 4332--4336.

\bibitem{Bo} B. Bollob\'as,  On complete subgraphs of different orders,
Math. Proc. Cambridge Philos. Soc. 79 (1976), 19-24.

\bibitem{BM} J. A. Bondy and U. S. R. Murty, Graph Theory with
Applications. Macmillan, London, 1976.

\bibitem{BS} J. A. Bondy and M. Simonovits, Cycles of even length in
graphs, J. Combin. Theory Ser. B 16 (1974), 97--105.

\bibitem{Br} W. G. Brown, On graphs that do not contain a Thomsen graph,
Canad. Math. Bull.  9, (1966) 281-289.

\bibitem{BJ} B. Bukh and Z. Jiang, A bound on the number of edges in
graphs without an even cycle, arXiv: 1403.1601.

\bibitem{CG} D. Conlon and T. Gowers, Combinatorial theorems in sparse
random sets, arXiv:1011.4310.

\bibitem{Er} P. Erd\H{o}s, On extremal problems of graphs and
generalized graphs. Israel Journal of Mathematics 2.3 (1964), 183-190.

\bibitem{Er62} P. Erd\H{o}s, On the number of complete subgraphs
contained in certain graphs. Magyar Tud. Akad. Mat. Kut. Int. K\H{o}zl,
7 (1962), 459--474.

\bibitem{EG} P. Erd\H{o}s and T. Gallai, On Maximal Paths and Circuits
of Graphs, Acta Math. Acad. Sci. Hung. 10 (1959) 337-356.

\bibitem{ER} P. Erd\H{o}s and A. R\'enyi, On a problem in the theory of
graphs (in Hungarian), Publ. Math. Inst. Hungar. Acad. Sci.  7 (1962),
215--235.

\bibitem{ES} P.~Erd\H{o}s and A.~Stone, On the structure of linear
graphs, Bull. Amer. Math. Soc. 52 (1946), 1087--1091.

\bibitem{Fo} J. Fox, A new proof of the graph removal lemma, Ann. of
Math. 174 (2011), 561--579.

\bibitem{FR} P. Frankl and V. R\"odl, Large triangle-free subgraphs in
graphs without $K_4$, Graphs and Combinatorics 2 (1986), 135-144.

\bibitem{Fu} Z. F\"uredi, New asymptotics for bipartite Tur\'an numbers,
J. Combin. Theory Ser. A, 75 (1996), no. 1, 141--144.

\bibitem{FO} Z. F\"uredi and L. \"Ozkahya, On the maximum number of
triangles in special families of graphs and hypergraphs, in preparation.

\bibitem{GJ} M. R. Garey and D. S. Johnson, Computers and
Intractability: a Guide to NP-completeness,
Freeman and Company, San Francisco, 1979.

\bibitem{GL} E. Gy\H{o}ri and H. Li, The maximum number of triangles in
$C_{2k+1}$-free graphs, Combinatorics, Probability and Computing
21(1-2), 187-191 (2012).


\bibitem{GLS} W. Gan, P. Loh and B. Sudakov, Maximizing the number of
independent sets of a fixed size,  Combinatorics, Probability and 
Computing 24 (2015), 521--527.

\bibitem{HHKNR} H. Hatami, J. Hladk\'y, D. Kr\'al', S. Norine and A.
Razborov, On the number of pentagons in triangle-free graphs, J. Combin.
Theory Ser. A 120 (2013), no. 3, 722--732.

\bibitem{JOR} S. Janson, K. Oleszkiewicz and A. Ruci\'nski, Upper tails
for subgraph counts in random graphs, Israel J. Math. 142 (2004),
61--92.

\bibitem{KLR} Y. Kohayakawa, T.  Luczak and V. R\"odl, On $K^4$-free
subgraphs of random graphs, Combinatorica 17 (1997), 173-213.

\bibitem{KMV} A. Kostochka, D. Mubayi and J. Verstra\"ete, Tu\'{r}an
problems and shadows III: expansions of graphs, SIAM J. Discrete Math. 
29-2 (2015), 868--876.

\bibitem{KS} M. Krivelevich and B. Sudakov, Pseudo-random graphs, 
in: More
sets, graphs and numbers, 
Bolyai Society Mathematical Studies 15,
Springer, 2006, 199-262.

\bibitem{KST} T. K\"ovari, V. S\'os, and P. Tur\'an, On a problem of K.
Zarankiewicz, Colloquium Math. 3 (1954), 50-57. 


\bibitem{LV} F. Lazebnik and J. Verstra\"ete, On hypergraphs of girth
five, Electron. J. Combin. 10 (2003), Research Paper 25, 15 pp.

\bibitem{Ma} W. Mantel, Problem 28, Wiskundige Opgaven 10 (1907), 60-61.

\bibitem{Ni} V. Nikiforov, Graphs with many r-cliques have large
complete r-partite subgraphs, Bull. Lond. Math. Soc. 40 (2008), no. 1,
23--25.

\bibitem{RR} V. R\"odl and A. Ruci\'nski, Threshold functions for Ramsey
properties, J. Amer. Math. Soc. 8 (1995), 917-942.

\bibitem{RSz} I. Z. Ruzsa and E. Szemer\'edi, Triple systems with no six
points carrying three triangles, in Combinatorics (Keszthely, 1976),
Coll. Math. Soc. J. Bolyai 18, Volume II, 939--945.

\bibitem{Sc} M. Schacht, Extremal results for random discrete
structures, submitted.

\bibitem{Si} M. Simonovits, Paul Erd\H{o}s' influence on extremal graph
theory, in: The mathematics of Paul Erd\H{o}s, II, 148--192, Algorithms
Combin., 14, Springer, Berlin, 1997.

\bibitem{ST} D. Saxton and A. Thomason, Hypergraph containers,
Inventiones mathematicae, 2015.

\bibitem{Sz} T. Szab\'o, On the spectrum of projective norm-graphs,
Inform. Process. Lett. 86 (2003), no. 2, 71--74.

\bibitem{Tu} P. Tur\'{a}n, On an extremal problem in graph theory (in
Hungarian), Mat. Fiz. Lapok 48 (1941), 436-452.

\end{thebibliography}
 \end{document}